\newif\ifdetails
\newcommand{\DETAIL}[1]%
{\ifdetails\par\fbox{\begin{minipage}{0.9\linewidth}\textit{Detail:}
      #1\end{minipage}}\par\fi}
\newcommand{\TODO}[1]%
{\ifdetails\par\fbox{\begin{minipage}{0.9\linewidth}\textbf{TODO:}
      #1\end{minipage}}\par\fi}
\newtheorem{lemma}{Lemma}
\newtheorem{proposition}[lemma]{Proposition}
\newtheorem{theorem}[lemma]{Theorem}
\theoremstyle{remark}
\newtheorem{problem}{Problem}
\DeclareMathOperator{\N}{N}
\DeclareMathOperator{\V}{V}
\newcommand{\old}[1]{{}}
\title[Cut and pendant verteices vs. number of connected induced subgraphs]
{Cut and pendant vertices and the number of connected induced subgraphs of a graph}
\author{Audace A. V. Dossou-Olory}
\thanks{The author was partially supported by the National Research Foundation of South Africa, grant 118521.}
\address{Audace A. V. Dossou-Olory \\ Department of Mathematics and Applied Mathematics \\ University of Johannesburg \\ P.O. Box 524, Auckland Park, Johannesburg 2006, South Africa}
\email{audace@aims.ac.za}
\subjclass[2010]{Primary 05C30; secondary 05C35, 05C05}
\keywords{cut vertex, pendant vertex, induced subgraph, connected graph, extremal graph structure, tree}
\begin{document}

\begin{abstract}
A vertex whose removal in a graph $G$ increases the number of components of $G$ is called a cut vertex. For all $n,c$, we determine the maximum number of connected induced subgraphs in a connected graph with order $n$ and $c$ cut vertices, and also characterise those graphs attaining the bound. Moreover, we show that the cycle has the smallest number of connected induced subgraphs among all cut vertex-free connected graphs. The general case $c>0$ remains an open task. We also characterise the extremal graph structures given both order and number of pendant vertices, and establish the corresponding formulas for the number of connected induced subgraphs. The `minimal' graph in this case is a tree, thus coincides with the structure that was given by Li and Wang~[Further analysis on the total number of subtrees of trees. \emph{Electron. J. Comb.} 19(4), \#P48, 2012].
\end{abstract}

\maketitle

\section{Introduction and Preliminaries}
Let $G$ be a simple graph with vertex set $V(G)$ and edge set $E(G)$. The graph $G$ is said to be connected if for all $u,v \in V(G)$, there is a $u-v$ path in $G$. An induced subgraph $H$ of $G$ is a graph such that $\emptyset \neq V(H) \subseteq V(G)$ and $E(H)$ consists of all those edges of $G$ whose endvertices both belong to $V(H)$. The order of $G$ is the cardinality $|V(G)|$, i.e. the number of vertices of $G$; the girth of $G$ is the smallest order of a cycle (if any) in $G$; a pendant vertex (or leaf) of $G$ is a vertex of degree $1$ in $G$.

A general question in extremal/structural graph theory~\cite{Bollobas1978,simonovits1983extremal,turan1941extremalaufgabe} is to find the minimum or maximum value of a prescribed graph parameter in a specified class of graphs. Tur{\'a}n's theorem~\cite{turan1941extremalaufgabe} dating back to 1941, characterises the $n$-vertex graphs with greatest number of edges that contain no complete graph as a subgraph; this is probably the most classical result in extremal graph theory. This question has been studied quite thoroughly for several other parameters including the popular invariant \emph{number of subtrees} of a tree (a connected graph with no cycle). Substantial work has been reported in the literature on the number of subtrees, see for example~\cite{GreedyWagner,graham1981trees,jamison1984monotonicity, jamison1983average,kirk2008largest,LiWang,szekely2007binary, szekely2005subtrees,yan2006enumeration}. In recent works~\cite{Audacegenral2018,AudaceGirth2018}, our main purpose was to extend some extremal results on the number of subtrees of a tree to more general classes of graphs such as connected graphs or unicylic graphs (connected graphs with only one cycle). In~\cite{Audacegenral2018}, order is prescribed for the class of all connected graphs and the class of all unicyclic graphs. Specifically, paper~\cite{Audacegenral2018} characterises those graphs (or unicyclic graphs) with $n$ vertices that minimise or maximise the number of connected induced subgraphs, thus extending some results of Sz{\'e}kely and Wang~\cite{szekely2005subtrees}. In~\cite{AudaceGirth2018}, further classes of graphs are considered, namely the class of all unicyclic graphs of order $n$ and with girth $g$, and the class of all unicyclic graphs of girth $g$, with $n$ vertices of which $p$ are pendant. For each of the aforementioned classes of graphs, the extreme numbers of connected induced subgraphs were found in~\cite{AudaceGirth2018}, and the extremal graph structures were also characterised. Extremal results on the total number of connected subgraphs (not necessary induced subgraphs) appeared recently in~\cite{pandey2018extremizing}. In general, there is no monotone relationship between the number of connected subgraphs and the number of connected induced subgraphs. In other words, if graph $G$ has more connected subgraphs than graph $H$, it is not necessary true that $G$ also contains more connected induced subgraphs than $H$.
  
In this note, we continue our systematic investigation on the number of connected induced subgraphs by considering two further classes of connected graphs. A component of $G$ is a maximal (with respect to the number of vertices) connected induced subgraph of $G$. By $G-u$, we mean the graph that results from deleting vertex $u$ and all edges incident with $u$ in $G$. A \emph{cut vertex} of $G$ is a vertex $u\in \V(G)$ with the property that $G-u$ has more components than $G$. In the present paper, which complements~\cite{Audacegenral2018,AudaceGirth2018}, we concentrate on two new classes of connected graphs for which we determine the extreme values and characterise the extremal graphs with respect to the number of connected induced subgraphs.  Section~\ref{connected c cut} deals with the class of all connected graphs of order $n$ with $c$ cut vertices, while in Section~\ref{connected p pendant} the focus is placed on the class of all connected graphs with $n$ vertices of which $p$ are pendant.

\medskip
The $n$-vertex path and the $n$-vertex star are denoted by $P_n$ and $S_n$, respectively. By $\mathbb{T}^1(n,p)$, we mean the tree obtained from the vertex disjoint graphs $S_{1+\lfloor p/2 \rfloor}$ and $S_{1+\lceil p/2 \rceil}$ by identifying their central vertices with the two leaves of $P_{n-p}$, respectively. Set $m:=\lfloor{(n-1)/p \rfloor},~l:=n-1-p\cdot m$ and denote by $\mathbb{T}^2(n,p)$ the rooted tree whose branches are $l$ copies of $P_{m+2}$ and $p-l$ copies of $P_{m+1}$. The extremal tree structures that minimise or maximise the number of subtrees of a tree with prescribed order and number of pendant vertices were characterised by Li and Wang~\cite{LiWang}, and Andriantiana et al.~\cite{GreedyWagner}, respectively. Li and Wang's result~\cite[Theorem~1]{LiWang} states that precisely the tree $\mathbb{T}^1(n,p)$ has the smallest number of subtrees, while Andriantiana et al.'s result~\cite[Corollary~4]{GreedyWagner} states that the maximum number of subtrees is achieved by the tree $\mathbb{T}^2(n,p)$. We shall prove (see Theorem~\ref{MinPpendGrap} in Section~\ref{connected p pendant}) that $\mathbb{T}^1(n,p)$ is the unique graph of order $n$ and with $p$ pendant vertices that minimises the number of connected induced subgraphs.

The Wiener index of a connected graph $G$ is defined as the sum of distances between all unordered pairs of vertices of $G$. The first results on this distance-based invariant date back to 1947 and are due to the chemist H.~Wiener~\cite{Wiener1947} who observed its strong correlation to the boiling point of certain chemical compounds. Subsequently, several authors have obtained sharp bounds on the Wiener index under various restrictions. A lower bound on the Wiener index, in terms of order and size, was given by Entringer et al.~\cite{Entringer1976}. An upper bound, depending on order, also appeared in~\cite{Entringer1976} by Entringer et al., and in ~\cite{Doyle} by Doyle and Graver. The maximum Wiener index among all cut vertex-free graphs was obtained by Plesn{\'\i}k~\cite{plesnik1984sum}. The Wiener index has been shown to correlate well with other chemical indices in applications~\cite{SzekelySemiR,WagnerCorr}. The tree $\mathbb{T}^1(n,p)$ was previously to Li and Wang's result~\cite[Theorem~1]{LiWang}, shown by Shi~\cite{Shi1993} to have the maximum Wiener index among all $n$-vertex trees with $p$ pendant vertices, while Entringer~\cite{entringer1999bounds}, and Entringer and Burns~\cite{burns1995graph} proved that $\mathbb{T}^2(n,p)$ is the tree of order $n$ with $p$ pendant vertices having the smallest Wiener index. The same is observed in our current context: for each of the graph classes in consideration, the graphs that are found to maximise the number of connected induced subgraphs were also recently reported in~\cite{plesnik1984sum,pandey2019wiener} to minimise the Wiener index, and vice versa.

\medskip
For a connected graph $G$, we denote by $n(G),~c(G),~p(G)$ (or simply $n,~c,~p$ if there is no danger of confusion) the order, number of cut vertices, and number of pendant vertices of $G$, respectively. It is well-known that if $G$ is a non-trivial connected graph (i.e. a graph of order at least two), then $c(G)\leq n(G)-2$ since a leaf of a spanning tree of $G$ cannot be a cut vertex of $G$. This bound is achieved by paths only (the cut vertices of a path are its vertices of degree $2$). From here onwards, we then assume that $n(G)>2$ and $c(G)< n(G)-2$. Clearly, if $T$ is a tree, then every vertex of $T$ is either a leaf or a cut vertex. Therefore, the identity $p(T)+c(T)=n(T)$ holds. Hence, the problem of finding the minimum (resp. maximum) number of connected induced subgraphs of an $n$-vertex tree having $c$ cut vertices is equivalent to the problem of finding the minimum (resp. maximum) number of connected induced subgraphs of an $n$-vertex tree having $n-c$ pendant vertices. However, as mentioned earlier, the extremal trees for the latter problem were already characterised by Li and Wang~\cite{LiWang}, and Andriantiana et al.~\cite{GreedyWagner}. This is a motivation for us to consider more general classes of connected graphs.

The complete graph of order $n$ and the cycle of order $n$ are denoted by $K_n$ and $C_n$, respectively. By $\deg_G(u)$, we mean the degree of vertex $u$ in the graph $G$. We denote by $\N(G)$ the number of connected induced subgraphs of $G$. By $\N(G)_u$, we mean those connected induced subgraphs of $G$ that contain vertex $u$, and $\N(G)_{u,v}$ stands for those connected induced subgraphs of $G$ that contain vertices $u$ and $v$. We simply write $G - u - v$ instead of $(G - u) - v$. 

We shall frequently employ the following three lemmas without further reference.
\begin{lemma}[\cite{szekely2005subtrees}]
We have $\N(P_n)=n(n+1)/2$ for all $n$. Moreover, if $u \in V(P_n)$, then $\N(P_n)_u\geq n$ with equality holding if and only if $u$ is a leaf.  
\end{lemma}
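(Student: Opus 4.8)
The plan is to use the elementary fact that a vertex subset of a path induces a connected subgraph if and only if it consists of consecutive vertices; thus the connected induced subgraphs of $P_n$ are precisely its subpaths. Concretely, I would label the vertices of $P_n$ as $v_1,v_2,\dots,v_n$ along the path, and note that a vertex set $S$ fails to be an interval exactly when there are indices $a<c<b$ with $v_a,v_b\in S$ but $v_c\notin S$, in which case $v_a$ and $v_b$ lie in different components of $P_n[S]$ (every $v_a$--$v_b$ path in $P_n$ runs through $v_c$); conversely an interval induces a subpath, which is connected. Hence the connected induced subgraphs of $P_n$ are exactly the sets $\{v_a,v_{a+1},\dots,v_b\}$ with $1\le a\le b\le n$.

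The first claim then follows by counting these pairs $(a,b)$: there are $\binom{n+1}{2}=n(n+1)/2$ of them (equivalently, summing over the order $k\in\{1,\dots,n\}$ of the subpath, there are $n-k+1$ subpaths on $k$ vertices, and $\sum_{k=1}^n (n-k+1)=n(n+1)/2$). For the second claim, fix $u=v_i$; a subpath $\{v_a,\dots,v_b\}$ contains $v_i$ if and only if $a\le i\le b$, so choosing $a\in\{1,\dots,i\}$ and $b\in\{i,\dots,n\}$ independently gives $\N(P_n)_{v_i}=i(n+1-i)$. Finally I would record the identity $i(n+1-i)-n=(i-1)(n-i)$; its right-hand side is a product of two nonnegative integers, hence $\N(P_n)_u\ge n$, with equality precisely when $i=1$ or $i=n$, that is, when $u$ is a leaf.

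The argument involves no real obstacle; the only step needing a sentence of justification is the identification of connected induced subgraphs with subpaths, and after that the statement reduces to the two one-line counts above together with the factorisation $i(n+1-i)-n=(i-1)(n-i)$.
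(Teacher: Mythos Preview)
Your argument is correct. The identification of connected induced subgraphs of $P_n$ with intervals $\{v_a,\dots,v_b\}$ is properly justified, the count $\binom{n+1}{2}$ is right, and the formula $\N(P_n)_{v_i}=i(n+1-i)$ together with the factorisation $i(n+1-i)-n=(i-1)(n-i)$ cleanly gives the inequality and its equality case.

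Note that the paper itself does not supply a proof of this lemma: it is stated with a citation to Sz\'ekely and Wang~\cite{szekely2005subtrees} and used without further comment. Your write-up is exactly the standard elementary derivation one would expect for this fact, so there is no meaningful comparison to make beyond observing that you have filled in what the paper leaves as a reference.
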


\begin{lemma}[\cite{Audacegenral2018}]
We have $\N(C_n)=n^2-n+1$ for all $n$. Moreover, if $u\in V(C_n)$, then we have $\N(C_n)_u=1+\binom{n}{2}$.
\end{lemma}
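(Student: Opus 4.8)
The plan is to describe the connected induced subgraphs of $C_n$ explicitly and then just count them. Label the vertices of $C_n$ cyclically as $v_0, v_1, \ldots, v_{n-1}$. If $S \subsetneq V(C_n)$ is a nonempty proper subset, then the induced subgraph $C_n[S]$ is obtained from the cycle by deleting at least one vertex, hence is a disjoint union of paths; it is connected precisely when $S$ consists of consecutive vertices along the cycle, i.e.\ $S$ is an \emph{arc} $\{v_i, v_{i+1}, \ldots, v_{i+k-1}\}$ (indices read modulo $n$) for some $1 \le k \le n-1$. The only connected induced subgraph on the full vertex set is $C_n$ itself. So every connected induced subgraph of $C_n$ is either $C_n$ or an arc of length $k$ for some $k \in \{1, \ldots, n-1\}$, and this dichotomy is the whole content of the argument.

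First I would count the arcs. For each fixed $k$ with $1 \le k \le n-1$, an arc of length $k$ is determined by its first vertex $v_i$, and since $k \le n-1$, distinct choices of $i$ yield distinct arcs (there is no wraparound coincidence), so there are exactly $n$ arcs of length $k$. Summing over $k$ and adding the cycle itself gives $\N(C_n) = 1 + \sum_{k=1}^{n-1} n = 1 + n(n-1) = n^2 - n + 1$, as claimed.

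For the second assertion, note that $C_n$ is vertex-transitive, so $\N(C_n)_u$ does not depend on the choice of $u$; fix $u = v_0$. The subgraph $C_n$ contains $u$, contributing $1$. An arc of length $k$ (with $1 \le k \le n-1$) contains $v_0$ if and only if its first vertex is one of $v_{-(k-1)}, v_{-(k-2)}, \ldots, v_0$, so exactly $k$ of the $n$ arcs of length $k$ contain $u$. Therefore $\N(C_n)_u = 1 + \sum_{k=1}^{n-1} k = 1 + \binom{n}{2}$.

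There is essentially no hard step here: the only points needing (minimal) care are the observation that a proper induced subgraph of a cycle is a disjoint union of paths and hence connected iff its vertex set is an arc; the verification that for $k \le n-1$ the $n$ arcs of length $k$ are pairwise distinct; and the separate bookkeeping of the single ``full'' subgraph $C_n$. Vertex-transitivity then makes the localised count $\N(C_n)_u$ immediate once the arcs through a fixed vertex are enumerated.
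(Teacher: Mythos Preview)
Your proof is correct. Note, however, that the paper does not actually prove this lemma: it is quoted from \cite{Audacegenral2018} and stated without proof, so there is no ``paper's own proof'' to compare against. Your argument is the standard direct count---classify connected induced subgraphs of $C_n$ as either the full cycle or an arc of consecutive vertices, then count arcs by length---and it is exactly what one would expect any proof of this fact to look like.
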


\begin{lemma}
We have $\N(K_n)=2^n -1$ for all $n$. Moreover, if $u\in V(K_n)$, then $\N(K_n)_u=2^{n-1}$ for all $n$.
\end{lemma}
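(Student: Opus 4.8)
The plan is to exploit the defining feature of the complete graph: for \emph{every} nonempty vertex subset $S\subseteq \V(K_n)$, the induced subgraph $K_n[S]$ is itself a complete graph $K_{|S|}$, hence connected. Conversely, each connected induced subgraph of $K_n$ is determined by its (necessarily nonempty) vertex set. First I would record this bijection between connected induced subgraphs of $K_n$ and nonempty subsets of $\V(K_n)$; once that observation is in place, the counting is immediate.

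From the bijection, $\N(K_n)$ equals the number of nonempty subsets of an $n$-element set, so $\N(K_n)=2^n-1$. For the refined statement, the connected induced subgraphs containing a fixed vertex $u$ are in bijection with the subsets of $\V(K_n)$ that contain $u$, equivalently with arbitrary subsets of $\V(K_n)\setminus\{u\}$; there are $2^{n-1}$ of these, giving $\N(K_n)_u=2^{n-1}$. Note this value is independent of the choice of $u$, as one expects from the vertex-transitivity of $K_n$.

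There is essentially no obstacle here: the only point worth stating explicitly is that in $K_n$ the notions ``induced subgraph'' and ``connected induced subgraph'' coincide, which is what collapses the count to a count of subsets. A one-line induction (or a direct appeal to $\sum_{k=1}^{n}\binom{n}{k}=2^n-1$ and $\sum_{k=1}^{n}\binom{n-1}{k-1}=2^{n-1}$) could be substituted for the bijective argument if a more computational phrasing is preferred, but the subset bijection is the cleanest route and needs no case analysis.
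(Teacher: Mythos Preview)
Your proof is correct and follows essentially the same idea as the paper: both rest on the observation that every nonempty vertex subset of $K_n$ induces a complete (hence connected) subgraph, giving $\N(K_n)=2^n-1$. For the second part the paper computes $\N(K_n)_u=\N(K_n)-\N(K_{n-1})=2^{n-1}$ by subtracting the subgraphs avoiding $u$, whereas you count the subsets containing $u$ directly; these are complementary phrasings of the same trivial count.
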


\begin{proof}
Every induced subgraph of $K_n$ is a complete graph. Thus $\N(K_n)=2^n -1$. If $u\in V(K_n)$, then $\N(K_n)_u =\N(K_n) - \N(K_{n-1})=2^{n-1}$.
\end{proof}

Let $G$ be a connected graph. A \emph{block} of $G$ is a maximal (with respect to the number of vertices) cut vertex-free connected induced subgraph of $G$~\cite{harary1959elementary}. In particular, if $G$ is a non-trivial connected graph, then so are all blocks of $G$. Moreover, every block of $G$ is either $P_2$ or a cyclic graph since every tree of order three or more contains at least one cut vertex. As a first consequence of this definition, one deduces that the intersection of the vertex sets of any two distinct blocks of $G$ consists of at most one vertex~\cite{harary1963characterization}. 

\medskip
The proof techniques in this work build on several graph transformations, some of which are known to have a counterpart for the Wiener index. The rest of the paper is organised as follows: Section~\ref{connected c cut} contains extremal results on the number of connected induced subgraphs with $c$ cut vertices. Define $G(n_1;\ldots; n_q)$ to be the graph constructed as follows: we consider $q+1>3$ pairwise vertex disjoint graphs $K_q,P_{n_1},\ldots, P_{n_q}$ such that $V(K_q)=\{v_1,\ldots,v_q\}$; for every $j\in \{1,\ldots,q\}$, we let $u_j$ be a leaf of $P_{n_j}$ and identify $u_j$ with $v_j$. We prove (see Theorem~\ref{Main1cut}) that $G(s;\ldots;s; s+1;\ldots; s+1)$ ($n-c-t$ copies of $s$ followed by $t$ copies of $s+1$) is the unique connected graph of order $n$ and with $c$ cut vertices that has the greatest number of connected induced subgraphs. A formula in terms of $n$ and $c$ is also provided for $\N(G(s;\ldots;s; s+1;\ldots; s+1))$. We demonstrate in Theorem~\ref{Main2Cut} that the cycle $C_n$ has the smallest number of connected induced subgraphs among all cut vertex-free connected graphs of order $n$. The general case $c>0$ seems to be hard and we leave this as an open problem. Section~\ref{connected p pendant} considers the class of all connected graphs with $n$ vertices of which $p$ are pendant. The `maximal' graph in this case is already known; see~\cite{EricAudace}. We summarise this result in Theorem~\ref{maxn.p} and then prove its minimisation counterpart in Theorems~\ref{MinPpendGrap} and ~\ref{Theo:p=0}. Specifically, we show that for $p\neq 1$, the tree $\mathbb{T}^1(n,p)$ remains the unique graph of order $n$ and with $p$ pendant vertices that has the smallest number of connected induced subgraphs. For $p=0$ and $n>5$, we prove that the minimum number of connected induced subgraphs is realised by the so-called double tadpole graph, and that it is unique with this property. By the $n$-vertex double tadpole graph, we mean the graph constructed from the path of order $n-4$ and two vertex disjoint triangles by identifying bijectively the two leaves of the path with two other vertices, one from each triangle.

Our approach sometimes follows~\cite{pandey2019wiener}, adapted to our current setting. Throughout this note, all graphs are simply connected. We assume $n\geq 3$ and $p\leq n-2$ since the case $p=n-1\geq 2$ corresponds to the $n$-vertex star, i.e. a vertex and $n-1$ leaves attached to it. 

\section{Connected graphs with $c$ cut vertices}\label{connected c cut}

We define $\mathcal{H}(n,c)$ to be the set of all connected graphs with order $n$ and $c$ cut vertices.

\subsection{The maximisation problem}
In order to state the main result of this subsection, we need to go through some preparation. It is obvious that the complete graph $K_n$ uniquely realises the maximum number of connected induced subgraphs among all graphs in $\mathcal{H}(n,0)$.

Let $G$ be a non-trivial connected graph. The following properties about $G$ are elementary; see for instance~\cite{harary1963characterization,hararyPrins}.
\begin{enumerate}[(i)]
	\item Every cut vertex of $G$ belongs to at least two distinct blocks of $G$;
	\item Every two distinct blocks of $G$ have at most one vertex in common. Whenever they have a vertex in common, it must be a cut vertex of $G$.
	\item If $G$ has at least one cut vertex, then $G$ also has at least one block that contains exactly one cut vertex of $G$.
\end{enumerate}
We shall make frequent use of these properties without further reference. We begin with a series of important lemmas. The next two lemmas are straightforward. 
\begin{lemma}\label{AddEdgeBlock}
	If $G'$ is obtained from a non-trivial connected graph $G$ by adding an edge between two nonadjacent vertices of the same block of $G$, then $$c(G')=c(G).$$
\end{lemma}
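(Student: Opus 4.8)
The plan is to show that inserting the new edge $e=xy$ (where $x,y$ are nonadjacent vertices of a common block $B$ of $G$) does not alter which vertices are cut vertices; that is, a vertex $u\in\V(G)$ is a cut vertex of $G$ if and only if it is a cut vertex of $G'$. Since both $G$ and $G'$ are connected, $u$ is a cut vertex of $G$ (resp. of $G'$) precisely when $G-u$ (resp. $G'-u$) is disconnected, so everything reduces to comparing the connectedness of $G-u$ and of $G'-u$ for each $u$.

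For the easy direction I would argue that a cut vertex $u$ of $G'$ is a cut vertex of $G$. If $u\in\{x,y\}$, then $G'-u=G-u$ because the only edge of $G'$ not in $G$ is $xy$, which is incident with $u$; hence $G-u$ is disconnected. If $u\notin\{x,y\}$, then $G-u$ is obtained from $G'-u$ by deleting the single edge $xy$, and deleting an edge cannot decrease the number of components, so $G-u$ is again disconnected.

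For the converse, suppose $u$ is a cut vertex of $G$. The case $u\in\{x,y\}$ is handled exactly as above, since then $G'-u=G-u$ is disconnected. If $u\notin\{x,y\}$, the key point is that $x$ and $y$ already lie in the same component of $G-u$: if $u\notin\V(B)$, the connected subgraph $B$ survives intact in $G-u$ and contains both $x$ and $y$; and if $u\in\V(B)$, then, noting that $B$ cannot be $P_2$ (as $x,y$ are nonadjacent) so that $|\V(B)|\geq 3$, and recalling that $B$ is cut vertex-free, $u$ is not a cut vertex of $B$, whence $B-u$ is connected and still contains $x$ and $y$. Therefore $G'-u$ arises from the disconnected graph $G-u$ by adding an edge inside one of its components, which does not change the number of components; so $G'-u$ is disconnected and $u$ is a cut vertex of $G'$.

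Combining the two directions, $G$ and $G'$ have exactly the same cut vertices, so $c(G')=c(G)$. I do not expect a genuine obstacle in this lemma; the only thing requiring a little care is the case analysis $u\in\{x,y\}$ versus $u\notin\{x,y\}$, and within the latter the split $u\in\V(B)$ versus $u\notin\V(B)$, together with the small observation that a block containing a nonadjacent pair is not $P_2$ and hence, being cut vertex-free, loses no vertex to disconnection when a single vertex is removed.
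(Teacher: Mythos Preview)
Your argument is correct. The paper does not actually supply a proof of this lemma; it simply declares it ``straightforward'' and moves on. Your case analysis (whether $u\in\{x,y\}$ and, when $u\notin\{x,y\}$, whether $u\in\V(B)$) is exactly the natural way to fill in the details, and each step is sound --- in particular your observation that a block containing a nonadjacent pair has at least three vertices and is therefore $2$-connected, so removing any single vertex from it leaves $x$ and $y$ connected.
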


Note that the above graph transformation (Lemma~\ref{AddEdgeBlock}) increases the number of edges in a block of $G$ while preserving the number of cut vertices of $G$. Our next transformation reduces the number of blocks of $G$ by one while preserving its number of cut vertices. 

\begin{lemma}\label{2block}
	Let	$B_1,B_2,B_3$ be three distinct blocks of a non-trivial connected graph $G$ such that $V(B_1)\cap V(B_2) \cap V(B_3)=\{w\}$. Assume that $G'$ is constructed from $G$ by adding an edge between a neighbour $v_1$ of $w$ in $B_1$ and a neighbour $v_2$ of $w$ in $B_2$. Then we have $$c(G)=c(G').$$
\end{lemma}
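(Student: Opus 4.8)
The plan is to establish the stronger assertion that $G$ and $G'$ have exactly the same set of cut vertices; since $c(\cdot)$ merely counts these vertices, the equality $c(G)=c(G')$ follows at once. Because $G'$ is obtained from $G$ by inserting the single edge $v_1v_2$, the two graphs share a common vertex set, so it suffices to verify, for each $u\in \V(G)$, that $u$ is a cut vertex of $G$ if and only if it is a cut vertex of $G'$.

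One implication uses nothing about the three blocks: if $u$ is a cut vertex of $G'$, then it is a cut vertex of $G$. Indeed, when $u\in\{v_1,v_2\}$ the new edge is incident with $u$, so $G'-u=G-u$; and when $u\notin\{v_1,v_2\}$, the graph $G-u$ is a spanning subgraph of $G'-u$, so if $G'-u$ is disconnected then so is $G-u$. For the converse I would assume $u$ is a cut vertex of $G$ (so $G-u$ is disconnected) and split into cases according to how $u$ relates to $v_1,v_2,w$. If $u\in\{v_1,v_2\}$, then again $G'-u=G-u$ is disconnected. If $u\notin\{v_1,v_2,w\}$, then $w\neq u$ and the edges $wv_1,wv_2$ both persist in $G-u$, so $v_1$ and $v_2$ lie in one and the same component of $G-u$; hence adding $v_1v_2$ merges no components and $G'-u$ stays disconnected.

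The substantive case is $u=w$. Here I would fix a neighbour $v_3$ of $w$ inside $B_3$ — such a vertex exists because every block of a non-trivial connected graph has at least two vertices — and then show that $v_1$, $v_2$, $v_3$ lie in three pairwise distinct components of $G-w$. The key point is that if two of them, say $v_i$ and $v_j$ with $i\neq j$, belonged to the same component, then a path in $G-w$ joining $v_i$ and $v_j$, together with the edges $wv_i$ and $wv_j$, would form a cycle through $w$, $v_i$, and $v_j$; since all edges of a cycle lie in a single block of $G$ and the unique block containing the edge $wv_i$ is $B_i$, this would force $B_i=B_j$, contradicting the hypothesis that $B_1,B_2,B_3$ are distinct (property~(ii) also ensures that $w,v_1,v_2,v_3$ are four distinct vertices, so the cycle is genuine). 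It follows that adding $v_1v_2$ to $G-w$ can at most merge the components of $v_1$ and $v_2$, leaving the component of $v_3$ intact, so $G'-w$ is still disconnected and $w$ remains a cut vertex of $G'$. Combining all the cases, $G$ and $G'$ have the same cut vertices, and $c(G)=c(G')$.

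The hard part is precisely the case $u=w$: one has to be sure that the third branch recorded by $B_3$ is not swallowed up when the new edge amalgamates $B_1$ and $B_2$, and this is exactly where the hypothesis $V(B_1)\cap V(B_2)\cap V(B_3)=\{w\}$ is used. Everything else is routine bookkeeping on how connectivity behaves under deletion of a vertex and addition of an edge.
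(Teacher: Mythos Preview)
Your proof is correct and follows essentially the same approach as the paper: both arguments show that $G$ and $G'$ have identical cut-vertex sets via a case analysis, with the only nontrivial case being $u=w$, handled by exhibiting a vertex of $B_3-w$ whose component in $G-w$ is unaffected by the new edge $v_1v_2$. Your case split (by whether $u\in\{v_1,v_2\}$, $u=w$, or neither) is slightly more streamlined than the paper's (which first asks whether $z\in V(B_1)\cup V(B_2)\cup V(B_3)$), but the substance is the same.
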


\begin{proof}
	Clearly, every cut vertex of $G'$ is a cut vertex of $G$ by construction. Let $z$ be a cut vertex of $G$. If $z \notin V(B_1)\cup V(B_2)\cup V(B_3)$, then all vertices in $V(B_1)\cup V(B_2)\cup V(B_3)$ are entirely contained in only one component of $G-z$. Thus $z$ is a cut vertex of $G'$. Otherwise, let $j\in \{1,2,3\}$ such that $z\in V(B_j)$. If $z\in \{v_1,v_2\}$, then $G-z$ and $G'-z$ are isomorphic graphs by definition of $G'$; otherwise $z\notin \{v_1,v_2\}$. If $z\neq w$, then $v_1$ and $v_2$ belong to the same component of $G-z$. Thus $z$ is a cut vertex of $G'$. Otherwise $z= w$ and so $V(B_1-z),V(B_2-z),V(B_3-z)$ are all contained entirely in distinct components of $G-z$. Since an edge is only added between $v_1$ and $v_2$ in $G$ to obtain $G'$, we deduce that the component of $G'-z$ that contains $B_3-z$ as a subgraph remains isolated in $G'-z$. Hence, $z$ is a cut vertex of $G'$.
\end{proof}

Consider $q+1>3$ pairwise vertex disjoint graphs $K_q,P_{n_1},\ldots, P_{n_q}$ such that $1\leq n_1\leq n_2-1$ and $V(K_q)=\{v_1,\ldots,v_q\}$. For every $j\in \{1,\ldots,q\}$, let $u_j$ be a leaf of $P_{n_j}$ and identify $u_j$ with $v_j$. We denote by $G(n_1;\ldots; n_q)$ the resulting graph. 

\begin{lemma}\label{PathOrder}
Let $H$ be a connected graph of order greater than two, and $u,v$ two distinct vertices of $H$ such that $\N(H)_{u,v}>1$ and $\N(H-u)_v \leq  \N(H-v)_u$. Let $H(n_1;n_2)$ be the graph obtained from $H$ by identifying $u$ with a leaf of $P_{n_1}$, and $v$ with a leaf of $P_{n_2}$ for some $1\leq n_1\leq n_2-1$. We have
\begin{align*}
\N(H(n_1;n_2)) \leq \N(H(n_1+1;n_2-1))\,.
\end{align*}
The inequality is strict if and only if $\N(H-u)_v < \N(H-v)_u$ or $n_1<n_2-1$. In particular, we get
\begin{align*}
	\N(G(n_1;n_2;\cdots; n_q)) \leq  \N(G(n_1+1;n_2-1;n_3;\cdots; n_q))
	\end{align*}
	if and only if $|n_1-n_2|\geq 1$. Equality holds if and only if $|n_1-n_2|= 1$.
\end{lemma}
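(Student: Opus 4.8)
The plan is to count connected induced subgraphs of $H(n_1;n_2)$ according to how they intersect the two attached paths, then compare with $H(n_1+1;n_2-1)$ term by term. Write $P = P_{n_1}$ attached at $u$ and $Q = P_{n_2}$ attached at $v$, and think of the vertices of $P$ as $u = p_0, p_1, \dots, p_{n_1-1}$ (so $p_1$ is the neighbour of $u$ inside the path) and similarly $v = q_0, q_1, \dots, q_{n_2-1}$. Any connected induced subgraph $S$ of $H(n_1;n_2)$ meets $P$ in a set that, if nonempty, is either a ``tail'' $\{p_0,\dots,p_i\}$ (when $S$ also meets $H - u$) or an arbitrary subpath of $P \setminus \{u\}$ (when $S \subseteq V(P)$), and symmetrically for $Q$. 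I would split the count into: (a) subgraphs contained entirely in $P$ minus $u$, or entirely in $Q$ minus $v$, or in $H$; (b) subgraphs that meet $H$ and exactly one of the two tails; (c) subgraphs that meet $H$ and both tails; (d) the lone subgraphs contained in $P$ or $Q$ but containing $u$ resp.\ $v$ and nothing of $H$ — these are absorbed into the path-only count. The key point is that the quantities $\N(P_{n_1})$, $\N(P_{n_1})_u$, etc., are controlled by Lemma~1, and the cross term in (c) factorises.

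Concretely, the classes that change when we move a vertex from $P$ to $Q$ are: the purely-internal path counts $\N(P_{n_1}) + \N(P_{n_2})$ (internal vertices only, i.e.\ $\binom{n_1}{2}+\binom{n_2}{2}$-type terms after excluding the endpoints $u,v$); the ``tail through $u$ together with $H$'' terms, which contribute $(n_1)\cdot\bigl(\N(H)_u - \text{something}\bigr)$-style expressions; and the ``both tails plus a connected piece of $H$ joining $u$ to $v$'' term, which is exactly $n_1 \cdot n_2 \cdot \N(H)_{u,v}$ minus lower-order corrections for the configurations already counted. I would set up the exact bookkeeping so that
\[
\N(H(n_1;n_2)) = A + n_1\, X + n_2\, Y + n_1 n_2\, Z + \binom{n_1+1}{2}_{\!*} + \binom{n_2+1}{2}_{\!*},
\]
where $A,Z$ do not depend on $(n_1,n_2)$ (with $Z = \N(H)_{u,v} \geq 1$), $X = \N(H-v)_u$ counts configurations using the full $P$-tail but avoiding $v$'s side of $H$, $Y = \N(H-u)_v$ is the symmetric quantity, and the starred binomials are the internal path contributions. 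Taking the difference $\N(H(n_1+1;n_2-1)) - \N(H(n_1;n_2))$ then collapses to $(X - Y) + Z\,(n_2 - 1 - n_1)$ plus a nonnegative change in the internal-path terms, and since $Z \geq 1$, $X = \N(H-v)_u \geq \N(H-u)_v = Y$, and $n_2 - 1 - n_1 \geq 0$, the whole difference is $\geq 0$; it is strictly positive exactly when $X > Y$ or $n_1 < n_2 - 1$, giving the stated equality case.

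The main obstacle is getting the decomposition genuinely exact — in particular making sure no connected induced subgraph is double-counted across the ``one tail'' and ``two tails'' cases, and correctly identifying which subgraphs of $P$ that contain $u$ but no vertex of $H$ contribute (only the single subgraph $\{u\}$ is shared with the ``tail through $u$'' family, and one must decide once and for all whether $\{u\}$ alone, or $\{u\}\cup(\text{piece of }H)$, carries the factor). I would handle this by fixing the convention that a configuration ``using the $P$-tail of length $i \in \{0,1,\dots,n_1-1\}$'' always includes $u$, so the $i=0$ case is ``$u$ alone on the $P$-side,'' and then the coefficient of the $H$-part is exactly $n_1$ (the choices $i=0,\dots,n_1-1$) in the single-tail family and $n_1 n_2$ in the double-tail family; the internal-to-$P$ subgraphs not containing $u$ are then counted once, separately, as $\binom{n_1}{2}$. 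Once the convention is pinned down, the comparison is the short computation above, and the ``in particular'' clause for $G(n_1;\dots;n_q)$ follows immediately by taking $H = G(n_1;\dots;n_q)$ with $u,v$ the two relevant attachment vertices $v_1,v_2 \in V(K_q)$: there $\N(H)_{u,v} \geq 1$ trivially (indeed $\{v_1,v_2\}$ induces an edge, so $\N(H)_{v_1,v_2}>1$), and by the symmetry of $K_q$ one has $\N(H-v_1)_{v_2} = \N(H-v_2)_{v_1}$, so the inequality holds and is strict precisely when $|n_1 - n_2| \geq 2$, i.e.\ equality iff $|n_1 - n_2| = 1$ (the case $n_1 = n_2$ being excluded by the running hypothesis $n_1 \leq n_2 - 1$).
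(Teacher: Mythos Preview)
Your approach is essentially identical to the paper's: decompose $\N(H(n_1;n_2))$ according to which of $u,v$ the subgraph contains, obtaining
\[
\N(H(n_1;n_2)) = n_1 n_2\, \N(H)_{u,v} + n_1\, \N(H-v)_u + n_2\, \N(H-u)_v + \binom{n_1}{2} + \binom{n_2}{2} + \N(H-u-v),
\]
and then subtract. There is one slip in your plan worth flagging: the change in the internal-path terms when passing from $(n_1,n_2)$ to $(n_1+1,n_2-1)$ is $\binom{n_1+1}{2}+\binom{n_2-1}{2}-\binom{n_1}{2}-\binom{n_2}{2} = n_1 - n_2 + 1 \le 0$, not nonnegative as you wrote. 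This negative contribution is exactly absorbed by the strict hypothesis $Z = \N(H)_{u,v} > 1$ (not merely $\ge 1$), so that the full difference becomes $(X-Y) + (n_2-1-n_1)(Z-1)$; with $Z>1$ the equality analysis then goes through as you state. The application to $G(n_1;\dots;n_q)$ via the symmetry $\N(H-v_1)_{v_2}=\N(H-v_2)_{v_1}$ is exactly the paper's reasoning as well.
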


\begin{proof}
We categorise subgraphs of $H(n_1;n_2)$ according to whether they contain an element of $\{u,v\}$ or not. Removing vertices $u$ and $v$ from $H(n_1;n_2)$ yields the graphs (possibly empty) $P_{n_1-1}, P_{n_2-1}$ and $H-u-v$. Thus $\N(P_{n_1-1})+ \N(P_{n_2-1})+\N(H-u-v)$ counts the number of connected induced subgraphs of $H(n_1;n_2)$ that contain none of the vertices $u,v$. On the other hand, $n_1\cdot \N(H-v)_u + n_2\cdot \N(H-u)_v$ counts the number of connected induced subgraphs of $H(n_1;n_2)$ that contain $u$ or $v$ but not both. The number of connected induced subgraphs of $H(n_1;n_2)$ that contain both $u$ and $v$ is given by $n_1\cdot n_2\cdot \N(H)_{u,v}$. Hence, we get
\begin{align*}
	\N(H(n_1;n_2))&=n_1\cdot n_2\cdot \N(H)_{u,v}+n_1\cdot \N(H-v)_u + n_2\cdot \N(H-u)_v\\
	&+\N(P_{n_1-1})+ \N(P_{n_2-1})+\N(H-u-v)\,.
\end{align*}
This implies that
\begin{align*}
\N(H(n_1;n_2))-&\N(H(n_1+1;n_2-1))=(n_1\cdot n_2 -(n_1+1)(n_2-1))\N(H)_{u,v}\\
& + (n_1-(n_1+1)) \N(H-v)_u +(n_2- (n_2-1)) \N(H-u)_v\\
&+\binom{n_1}{2} - \binom{n_1+1}{2} + \binom{n_2}{2} - \binom{n_2-1}{2}\\
&=(n_1-n_2+1)(\N(H)_{u,v} -1) +  \N(H-u)_v -  \N(H-v)_u \leq 0\,.
\end{align*}
Moreover, this inequality becomes an equality if and only if $\N(H-u)_v = \N(H-v)_u$ and $n_1=n_2-1$. This proves the lemma.
\end{proof}

\begin{lemma}\label{oneCut}
Let $H(n;l)$ be the graph constructed from the two vertex disjoint complete graphs $K_l$ and $K_{n+1-l}$ by identifying $u \in V(K_l)$ with $v \in V(K_{n+1-l})$ for some $n\geq 3$ and $2\leq l\leq (n+1)/2$. Then we have 
\begin{align*}
\N(H(n;2))>\N(H(n;3))>\cdots >\N(H(n;\lfloor (n+1)/2 \rfloor))\,.
\end{align*}
\end{lemma}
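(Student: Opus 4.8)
The plan is to write down a closed formula for $\N(H(n;l))$ and then verify that it is strictly decreasing in $l$ on the range $2\le l\le\lfloor(n+1)/2\rfloor$ by an elementary estimate. Let $w$ denote the vertex of $H(n;l)$ arising from the identification of $u\in V(K_l)$ with $v\in V(K_{n+1-l})$; it is the unique cut vertex, and $K_l-w=K_{l-1}$, $K_{n+1-l}-w=K_{n-l}$ lie in different components of $H(n;l)-w$. First I would count the connected induced subgraphs that avoid $w$: each such subgraph is contained entirely in $K_{l-1}$ or entirely in $K_{n-l}$, and conversely every nonempty subset of a clique induces a connected graph, so (recalling $\N(K_m)=2^m-1$) there are $(2^{l-1}-1)+(2^{n-l}-1)$ of them, with no double counting. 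Next I would count those that contain $w$: any such subgraph is obtained by picking an arbitrary subset of $V(K_l)\setminus\{w\}$ and an arbitrary subset of $V(K_{n+1-l})\setminus\{w\}$ and adjoining $w$, and every such set induces a connected graph since $w$ is adjacent to all the chosen vertices; equivalently $\N(H(n;l))_w=\N(K_l)_w\cdot\N(K_{n+1-l})_w=2^{l-1}\cdot 2^{n-l}=2^{n-1}$. Summing the two contributions gives
\[
\N(H(n;l))=2^{n-1}+2^{l-1}+2^{n-l}-2 .
\]

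It then remains to analyse $g(l):=2^{l-1}+2^{n-l}$. A direct computation gives $g(l)-g(l+1)=2^{n-l-1}-2^{l-1}$, which is positive exactly when $n-l-1>l-1$, i.e.\ when $l<n/2$. Since $2\le l$ and $l+1\le\lfloor(n+1)/2\rfloor\le(n+1)/2$ together force $l\le(n-1)/2<n/2$, we obtain $g(l)>g(l+1)$ for every integer $l$ with $2\le l\le\lfloor(n+1)/2\rfloor-1$. Chaining these strict inequalities yields $\N(H(n;2))>\N(H(n;3))>\cdots>\N(H(n;\lfloor(n+1)/2\rfloor))$, as claimed.

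I do not expect a genuine obstacle here: the content is the observation that the two cliques meet in a single vertex adjacent to everything, which makes the count of connected induced subgraphs factorise. The only points that need a little care are confirming that every set counted in the ``contains $w$'' case is indeed connected and that the ``avoids $w$'' pieces are counted disjointly (both follow from $w$ being the unique cut vertex and from the cliques being complete), and checking that the strict inequality survives at the top of the range --- which it does, since the last relevant value of $l$ is at most $(n-1)/2$, strictly below $n/2$, in both parities of $n$. Degenerate small pieces such as $K_1$ cause no trouble, as $\N(K_1)=1=2^1-1$ is consistent with the formula.
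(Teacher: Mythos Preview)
Your proof is correct and follows essentially the same approach as the paper: both derive the closed formula $\N(H(n;l))=2^{n-1}+2^{l-1}+2^{n-l}-2$ by splitting according to whether the identified vertex lies in the subgraph, and then check that $\N(H(n;l))-\N(H(n;l+1))=2^{n-l-1}-2^{l-1}>0$ for $l\le(n-1)/2$. Your presentation is slightly more verbose in justifying connectedness and the range of $l$, but the argument is the same.
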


\begin{proof}
We have
\begin{align*}
\N(H(n;l))&=\N(H(n;l))_u+\N(H(n;l)-u)\\
&=\N(K_l)_u\cdot \N(K_{n+1-l})_v + \N(K_{l-1})+\N(K_{n-l}) =2^{n-1}+2^{l-1}+2^{n-l}-2
\end{align*}
which implies that 
\begin{align*}
\N(H(n;l))-\N(H(n;l+1))=2^{n-l-1} - 2^{l-1}>0
\end{align*}
for all $2\leq l \leq (n-1)/2$. The statement of the lemma follows.
\end{proof}

\begin{lemma}\label{Block.two.cutPend}
Let $l\geq 3,~r\geq 2$ be two positive integers and $K_l,K_r$ two vertex disjoint complete graphs such that $V(K_r)=\{w_1,\ldots,w_r\}$. Consider $r-1$ vertex disjoint connected graphs $R_2,\ldots,R_r$ such that $v_j \in V(R_j)$ for every $j \in \{2,\ldots,r\}$ and $|V(R_2)|>1$. Identify $w_1$ with a fixed vertex $u\in V(K_l)$. Further, identify $w_j$ with $v_j$ for all $j>1$. Call the resulting graph $G_1$. Fix $v \neq u \in V(K_l)$ and let $G_2$ be constructed from $G_1$ by deleting the edges joining $v$ to a neighbour, except $u$, of $v$ in $K_l$. Finally, let $G_3$ be constructed from $G_2$ by making the graph induced by $V(K_l - u -v) \cup V(K_r-w_1)$ in $G_2$ a complete graph. We have
\begin{align*}
\N(G_3)> \N(G_1) \quad \text{and} \quad c(G_1)=c(G_3)\,.
\end{align*}

\end{lemma}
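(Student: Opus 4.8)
The plan has two essentially independent parts: the easy identity $c(G_1)=c(G_3)$, and the inequality $\N(G_3)>\N(G_1)$, which I would obtain from an explicit decomposition that cancels the common contribution of the appendages $R_2,\ldots,R_r$.

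For the cut vertices I would not transform $G_1$ into $G_3$ directly (they differ by a \emph{deletion} and an \emph{addition} of edges), but instead show $c(G_1)=c(G_2)=c(G_3)$ by rebuilding each of $G_1$ and $G_3$ from $G_2$ using only edge additions. In $G_2$ the vertex $u$ lies in (at least) three distinct blocks: the clique on $V(K_l)\setminus\{v\}$, the clique $K_r$, and the pendant edge $uv$. Adding one edge between a neighbour of $u$ in $K_l-v$ and a neighbour of $u$ in $K_r$ preserves the number of cut vertices by Lemma~\ref{2block} (with the pendant edge $uv$ as the third block); afterwards these two cliques lie in a single block, so every further edge needed to complete the clique on $V(K_l-u-v)\cup V(K_r-w_1)$ is an edge inside that block and may be added one at a time via Lemma~\ref{AddEdgeBlock}. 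This yields $G_3$, so $c(G_3)=c(G_2)$. Running the same argument with the block $K_r$ in the role of the third block lets one re-complete $K_l$ from $G_2$ and obtain $G_1$, so $c(G_1)=c(G_2)$, and hence $c(G_1)=c(G_3)$.

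For the count, write $W=\{w_2,\ldots,w_r\}$ and $X=V(K_l)\setminus\{u,v\}$ (so $|X|=l-2$), and set $\beta_j=\N(R_j)_{v_j}$ for $2\le j\le r$, $\Pi=\prod_{j=2}^{r}(1+\beta_j)$ and $\alpha=\sum_{j=2}^{r}\N(R_j-v_j)$. Both graphs split at the cut vertex $u$: $G_1$ is the union, sharing only $u$, of $K_l$ and the graph $M$ obtained from $K_r$ by attaching $R_j$ at $w_j$ for $j\ge2$; while $G_3$ is obtained from the graph $\widehat M$ — the clique on $\{u\}\cup X\cup W$ with the same $R_j$ attached at $w_j$ — by adding a pendant vertex $v$ at $u$. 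Classifying connected induced subgraphs according to whether they contain $u$ (exactly as in the proof of Lemma~\ref{oneCut}) gives
\begin{align*}
\N(G_1)&=\N(K_l)_u\cdot\N(M)_u+\N(K_l-u)+\N(M-u),\\
\N(G_3)&=2\,\N(\widehat M)_u+\N(\widehat M-u)+1,
\end{align*}
where $\N(K_l)_u=2^{l-1}$ and $\N(K_l-u)=2^{l-1}-1$. To evaluate the remaining four quantities I would classify a connected induced subgraph $S$ by the set $\{\,j:w_j\in S\,\}$: the part $S\cap V(R_j)$ is empty when $w_j\notin S$ and is a connected induced subgraph of $R_j$ through $w_j$ when $w_j\in S$, these choices being independent across $j$, while an arbitrary subset of the clique part is automatically connected. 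This yields $\N(M)_u=\Pi$, $\N(M-u)=\alpha+\Pi-1$, $\N(\widehat M)_u=2^{l-2}\,\Pi$ and $\N(\widehat M-u)=\alpha+2^{l-2}\,\Pi-1$.

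Substituting, $\N(G_1)=2^{l-1}\Pi+\Pi+2^{l-1}+\alpha-2$ and $\N(G_3)=2^{l-1}\Pi+2^{l-2}\Pi+\alpha$, so
\begin{align*}
\N(G_3)-\N(G_1)=(2^{l-2}-1)\,(\Pi-2).
\end{align*}
Now $l\ge3$ gives $2^{l-2}-1\ge1$, and $|V(R_2)|>1$ means $v_2$ has a neighbour in $R_2$, so $\beta_2\ge2$ and hence $\Pi\ge1+\beta_2\ge3$, i.e. $\Pi-2\ge1$; therefore $\N(G_3)-\N(G_1)\ge1>0$, as claimed. The only genuinely delicate point in the argument is the block bookkeeping behind $c(G_1)=c(G_2)=c(G_3)$ — in particular the observation that a pendant edge at $u$ is always available in $G_2$ to play the role of the third block required by Lemma~\ref{2block}; the rest is the routine appendage decomposition above.
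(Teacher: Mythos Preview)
Your proof is correct, and in fact cleaner on the counting side than the paper's.

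For the inequality $\N(G_3)>\N(G_1)$, the paper does not compute $\N(G_3)$ directly. Instead it passes through $G_2$: it computes $\N(G_1)-\N(G_2)=2^{l-2}-1$ exactly, and then obtains only a lower bound $\N(G_3)-\N(G_2)\ge (2^{l-2}-1)2^{r-1}$ by exhibiting two explicit families of vertex sets that induce connected subgraphs of $G_3$ but disconnected subgraphs of $G_2$. This yields $\N(G_3)-\N(G_1)\ge 2^{l-2}-1$. Your appendage decomposition bypasses $G_2$ entirely and produces the exact value $\N(G_3)-\N(G_1)=(2^{l-2}-1)(\Pi-2)$, which is strictly sharper and makes the role of the hypothesis $|V(R_2)|>1$ (forcing $\Pi\ge3$) completely transparent.

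For $c(G_1)=c(G_3)$, the paper argues directly: in $G_3$ the vertex $v$ is pendant, so $u$ is still a cut vertex and $v$ is not, while all remaining vertices visibly keep their status. Your route via $c(G_1)=c(G_2)=c(G_3)$, rebuilding each of $G_1$ and $G_3$ from $G_2$ by a single application of Lemma~\ref{2block} followed by repeated use of Lemma~\ref{AddEdgeBlock}, is a legitimate alternative; the observation that the pendant edge $uv$ (respectively the block $K_r$) can serve as the required third block at $u$ is exactly what makes Lemma~\ref{2block} applicable, and the hypotheses $l\ge3$, $r\ge2$ guarantee the three blocks are genuinely distinct.
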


\begin{proof}
Denote by $H$ the graph induced by $V(G_1)-V(K_l -u)$ in $G_1$. We have
\begin{align*}
\N(G_1)&=\N(G_1)_u+\N(G_1 -u)=\N(K_l)_u\cdot \N(H)_{w_1}+\N(K_{l-1})+\N(H-w_1)\\
\N(G_2)&=\N(G_2)_u+\N(G_2 -u)= 2 \N(H)_{w_1}\cdot \N(K_l -v)_u +1 +\N(K_{l-2})+\N(H-w_1)\,.
\end{align*}
In particular, we get $\N(G_1)-\N(G_2)=2^{l-2}-1$. Clearly, $G_2$ is a subgraph of $G_3$ by construction. Let $ S_1 \subseteq V(K_l -u -v)$ and $ S_2 \subseteq V(K_r-w_1)$ be two nonempty subsets of vertices of $V(G_2)=V(G_3)$. These choices of $S_1$ and $S_2$ are possible since $l\geq 3$ and $r\geq 2$. The graph induced by $S_1 \cup S_2$ in $G_2$ is disconnected while the graph induced by $S_1 \cup S_2$ in $G_3$ is connected. The total number of these connected induced subgraphs in $G_3$ is given by $(2^{l-2}-1)(2^{r-1}-1)$. Let $z$ be a vertex adjacent to $v_2$ in $R_2$. Vertex $z$ exists since $|V(R_2)|>1$. The graph induced by $S_1 \cup \{v_2,z\}$ in $G_3$ is connected and different from all the subgraphs induced by $S_1 \cup S_2$ as $z\notin S_1 \cup S_2$. Moreover, $S_1 \cup \{v_2,z\}$ induces a disconnected graph in $G_2$. The total number of such connected induced subgraphs in $G_3$ is $2^{l-2}-1$. Therefore, we deduce that
\begin{align*}
\N(G_3)-\N(G_2) \geq (2^{l-2}-1)2^{r-1} \geq 2(2^{l-2}-1)\,.
\end{align*}				
It follows that $\N(G_3)-\N(G_1) \geq 2^{l-2}-1 > 0$.

Now since $v$ is a leaf of $G_3$ and $u$ is adjacent to $v$ in $G_3$, we conclude that $u$ remains a cut-vertex of $G_3$ while $v$ remains a non cut-vertex of $G_3$. Moreover, all other vertices of $G_1$ preserve their status (cut vertex or not) in $G_3$. This proves that $c(G_1)=c(G_3)$, completing the proof.
\end{proof}

Next, we describe another graph transformation that will also be useful for our analysis. It is a result that is similar in nature to but different from Lemma~\ref{Block.two.cutPend}. It does, however, complement Lemma~\ref{Block.two.cutPend}.

\begin{lemma}\label{Block.Two.cutCenter}
Let $K_l,K_r$ be two complete graphs with (disjoint) vertex sets
\begin{align*}
V(K_l)=\{u_1,\ldots,u_l\},~V(K_r)=\{w_1,\ldots,w_r\}
\end{align*}
for some $l,r\geq 3$. Consider $l+r-1$ vertex disjoint connected graphs $M,L_2,\ldots,L_l,R_2,\ldots,R_r$ such that $x_j \in V(L_j)$ and $z_j \in V(R_j)$ for all $j \neq 1$. Let $v_1,v_2$ be two distinct vertices of $M$. Identify $u_1$ with $v_1$, and $w_1$ with $v_2$. Further, identify $u_j$ with $x_j$, and $w_j$ with $z_j$ for all $j\neq 1$. Denote by $G_1$ the resulting graph. Let $G_2$ be obtained from $G_1$ by removing the edges joining $u_1$ to a neighbour, except $u_2$, of $u_1$ in $K_l$; see Figure~\ref{PictG1G2}. Let $w'$ be a fixed neighbour of $v_1$ in $M$ such that $w'$ lies on a shortest $v_1-v_2$ path $P$ in $G_2$. A new graph $G_3$ is constructed from $G_2$ by adding an edge between $w'$ and all vertices $u_3,\ldots,u_l$. We have
\begin{align*}
c(G_1)=c(G_3)\,.
\end{align*}

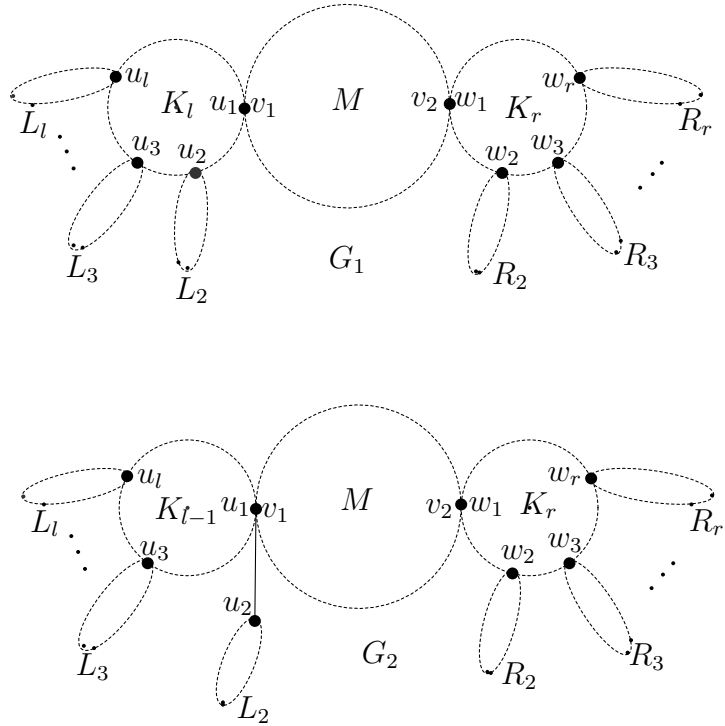
\begin{figure}[!h]
	\definecolor{tttttt}{rgb}{0.2,0.2,0.2}
	\begin{tikzpicture}[scale=1.5,line cap=round,line join=round,>=triangle 45,x=1.0cm,y=1.0cm]
	\draw [dash pattern=on 1pt off 1pt] (2.09,2.06) circle (0.6cm);
	\draw [dash pattern=on 1pt off 1pt] (5.09,2.06) circle (0.6cm);
	\draw [shift={(3.59,2.07)},dash pattern=on 1pt off 1pt]  plot[domain=0.02:3.16,variable=\t]({1*0.9*cos(\t r)+0*0.9*sin(\t r)},{0*0.9*cos(\t r)+1*0.9*sin(\t r)});
	\draw [shift={(3.59,2.07)},dash pattern=on 1pt off 1pt]  plot[domain=-3.12:0.02,variable=\t]({1*0.9*cos(\t r)+0*0.9*sin(\t r)},{0*0.9*cos(\t r)+1*0.9*sin(\t r)});
	\draw [rotate around={52.44:(1.46,1.2)},dash pattern=on 1pt off 1pt] (1.46,1.2) ellipse (0.49cm and 0.16cm);
	\draw [rotate around={11.17:(1.11,2.25)},dash pattern=on 1pt off 1pt] (1.11,2.25) ellipse (0.48cm and 0.13cm);
	\draw [rotate around={75.35:(4.83,1.05)},dash pattern=on 1pt off 1pt] (4.83,1.05) ellipse (0.47cm and 0.14cm);
	\draw [rotate around={-57.01:(5.69,1.18)},dash pattern=on 1pt off 1pt] (5.69,1.18) ellipse (0.49cm and 0.14cm);
	\draw [rotate around={-8.09:(6.16,2.25)},dash pattern=on 1pt off 1pt] (6.16,2.25) ellipse (0.55cm and 0.14cm);
	\draw (3.33,2.33) node[anchor=north west] {$M$};
	\draw (4.90,2.3) node[anchor=north west] {$ K_r $};
	\draw (2.28,2.25) node[anchor=north west] {$ u_1 $};
	\draw (2.64,2.2) node[anchor=north west] {$v_1 $};
	\draw (4.08,2.23) node[anchor=north west] {$ v_2 $};
	\draw (4.46,2.25) node[anchor=north west] {$ w_1 $};
	\draw (2.42,0.49) node[anchor=north west] {$ L_2 $};
	\draw (1.01,0.87) node[anchor=north west] {$ L_3 $};
	\draw (0.61,2.14) node[anchor=north west] {$ L_l $};
	\draw (4.74,0.79) node[anchor=north west] {$R_2 $};
	\draw (5.86,0.96) node[anchor=north west] {$ R_3 $};
	\draw (6.39,2.15) node[anchor=north west] {$ R_r $};
	\draw (2.3,1.37) node[anchor=north west] {$ u_2 $};
	\draw (1.57,1.86) node[anchor=north west] {$ u_3 $};
	\draw (1.55,2.5) node[anchor=north west] {$ u_l $};
	\draw (4.74,1.83) node[anchor=north west] {$ w_2 $};
	\draw (5.15,1.93) node[anchor=north west] {$ w_3$};
	\draw (5.15,2.52) node[anchor=north west] {$ w_r $};
	\draw (3.52,0.98) node[anchor=north west] {$ G_2$};
	\draw (2.68,1.06)-- (2.69,2.05);
	\draw [rotate around={68.56:(2.54,0.7)},dash pattern=on 1pt off 1pt] (2.54,0.7) ellipse (0.41cm and 0.14cm);
	\draw (1.7,2.25) node[anchor=north west] {$K_{l-1} $};
	\draw [dash pattern=on 1pt off 1pt] (1.99,5.6) circle (0.6cm);
	\draw [dash pattern=on 1pt off 1pt] (4.99,5.6) circle (0.6cm);
	\draw [shift={(3.49,5.61)},dash pattern=on 1pt off 1pt]  plot[domain=0.02:3.16,variable=\t]({1*0.9*cos(\t r)+0*0.9*sin(\t r)},{0*0.9*cos(\t r)+1*0.9*sin(\t r)});
	\draw [shift={(3.49,5.61)},dash pattern=on 1pt off 1pt]  plot[domain=-3.12:0.02,variable=\t]({1*0.9*cos(\t r)+0*0.9*sin(\t r)},{0*0.9*cos(\t r)+1*0.9*sin(\t r)});
	\draw [rotate around={85.09:(2.12,4.6)},dash pattern=on 1pt off 1pt] (2.12,4.6) ellipse (0.45cm and 0.14cm);
	\draw [rotate around={52.44:(1.37,4.74)},dash pattern=on 1pt off 1pt] (1.37,4.74) ellipse (0.49cm and 0.16cm);
	\draw [rotate around={11.17:(1.01,5.79)},dash pattern=on 1pt off 1pt] (1.01,5.79) ellipse (0.48cm and 0.13cm);
	\draw [rotate around={75.35:(4.73,4.58)},dash pattern=on 1pt off 1pt] (4.73,4.58) ellipse (0.47cm and 0.14cm);
	\draw [rotate around={-57.01:(5.6,4.72)},dash pattern=on 1pt off 1pt] (5.6,4.72) ellipse (0.49cm and 0.14cm);
	\draw [rotate around={-8.09:(6.06,5.79)},dash pattern=on 1pt off 1pt] (6.06,5.79) ellipse (0.55cm and 0.14cm);
	\draw (1.75,5.84) node[anchor=north west] {$K_l$};
	\draw (3.25,5.87) node[anchor=north west] {$M$};
	\draw (4.77,5.8) node[anchor=north west] {$ K_r $};
	\draw (2.18,5.8) node[anchor=north west] {$ u_1 $};
	\draw (2.54,5.78) node[anchor=north west] {$ v_1 $};
	\draw (3.95,5.83) node[anchor=north west] {$ v_2 $};
	\draw (4.33,5.83) node[anchor=north west] {$ w_1 $};
	\draw (1.88,4.2) node[anchor=north west] {$ L_2 $};
	\draw (0.92,4.39) node[anchor=north west] {$ L_3 $};
	\draw (0.51,5.66) node[anchor=north west] {$ L_l $};
	\draw (4.66,4.33) node[anchor=north west] {$ R_2 $};
	\draw (5.81,4.5) node[anchor=north west] {$ R_3 $};
	\draw (6.29,5.69) node[anchor=north west] {$ R_r $};
	\draw (1.9,5.37) node[anchor=north west] {$ u_2 $};
	\draw (1.5,5.44) node[anchor=north west] {$ u_3 $};
	\draw (1.44,6.04) node[anchor=north west] {$ u_l $};
	\draw (4.6,5.35) node[anchor=north west] {$ w_2 $};
	\draw (5.,5.45) node[anchor=north west] {$ w_3$};
	\draw (5.1,6.) node[anchor=north west] {$ w_r $};
	\draw (3.23,4.47) node[anchor=north west] {$ G_1 $};
	\fill [color=tttttt] (2.09,2.06) circle (0.5pt);
	\fill [color=black] (5.09,2.06) circle (0.5pt);
	\fill [color=black] (2.69,2.05) circle (1.5pt);
	\fill [color=black] (4.49,2.09) circle (1.5pt);
	\fill [color=black] (1.74,1.57) circle (1.5pt);
	\fill [color=black] (1.56,2.34) circle (1.5pt);
	\fill [color=black] (4.94,1.48) circle (1.5pt);
	\fill [color=black] (5.44,1.57) circle (1.5pt);
	\fill [color=black] (5.63,2.32) circle (1.5pt);
	\fill [color=black] (1.18,0.84) circle (0.5pt);
	\fill [color=black] (1.27,0.82) circle (0.5pt);
	\fill [color=tttttt] (0.65,2.16) circle (0.5pt);
	\fill [color=black] (0.83,2.09) circle (0.5pt);
	\fill [color=black] (4.72,0.61) circle (0.5pt);
	\fill [color=black] (4.75,0.6) circle (0.5pt);
	\fill [color=black] (5.95,0.78) circle (0.5pt);
	\fill [color=black] (5.98,0.89) circle (0.5pt);
	\fill [color=black] (6.69,2.17) circle (0.5pt);
	\fill [color=black] (6.51,2.09) circle (0.5pt);
	\fill [color=black] (1.07,1.81) circle (0.5pt);
	\fill [color=black] (1.13,1.67) circle (0.5pt);
	\fill [color=black] (1.19,1.52) circle (0.5pt);
	\fill [color=black] (6.35,1.57) circle (0.5pt);
	\fill [color=black] (6.26,1.47) circle (0.5pt);
	\fill [color=black] (6.16,1.35) circle (0.5pt);
	\fill [color=black] (2.68,1.06) circle (1.5pt);
	\fill [color=black] (2.4,0.34) circle (0.5pt);
	\fill [color=black] (2.4,0.32) circle (0.5pt);
	\fill [color=tttttt] (1.99,5.6) circle (0.5pt);
	\fill [color=black] (4.99,5.6) circle (0.5pt);
	\fill [color=black] (2.59,5.59) circle (1.5pt);
	\fill [color=black] (4.39,5.63) circle (1.5pt);
	\fill [color=tttttt] (2.16,5.02) circle (1.5pt);
	\fill [color=black] (1.65,5.11) circle (1.5pt);
	\fill [color=black] (1.46,5.87) circle (1.5pt);
	\fill [color=black] (4.85,5.02) circle (1.5pt);
	\fill [color=black] (5.34,5.11) circle (1.5pt);
	\fill [color=black] (5.53,5.86) circle (1.5pt);
	\fill [color=black] (2.09,4.18) circle (0.5pt);
	\fill [color=black] (2.01,4.23) circle (0.5pt);
	\fill [color=black] (1.09,4.38) circle (0.5pt);
	\fill [color=black] (1.17,4.36) circle (0.5pt);
	\fill [color=tttttt] (0.56,5.7) circle (0.5pt);
	\fill [color=black] (0.73,5.62) circle (0.5pt);
	\fill [color=black] (4.62,4.15) circle (0.5pt);
	\fill [color=black] (4.65,4.14) circle (0.5pt);
	\fill [color=black] (5.85,4.32) circle (0.5pt);
	\fill [color=black] (5.89,4.42) circle (0.5pt);
	\fill [color=black] (6.59,5.71) circle (0.5pt);
	\fill [color=black] (6.41,5.63) circle (0.5pt);
	\fill [color=black] (0.97,5.34) circle (0.5pt);
	\fill [color=black] (1.03,5.21) circle (0.5pt);
	\fill [color=black] (1.09,5.06) circle (0.5pt);
	\fill [color=black] (6.25,5.11) circle (0.5pt);
	\fill [color=black] (6.17,5.01) circle (0.5pt);
	\fill [color=black] (6.06,4.89) circle (0.5pt);
	\end{tikzpicture}
	\caption{The graphs $G_1$ and $G_2$ constructed in Lemma~\ref{Block.Two.cutCenter}.}\label{PictG1G2}
\end{figure}

Furthermore, let $L$ be the graph induced by $\{u_1\}\cup V(L_2)\cup \cdots \cup V(L_l)$ in $G_1$, and $R$ the graph induced by $\{w_1\}\cup V(R_2)\cup \cdots \cup V(R_r)$ in $G_1$. Assume that $\N(R)_{w_1}\geq \N(L)_{u_1}$. Then we have
\begin{align*}
N(G_3)>N(G_1)\,.
\end{align*}
\end{lemma}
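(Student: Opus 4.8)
The plan is to establish $c(G_1)=c(G_3)$ by checking that the transformation $G_1\mapsto G_3$ preserves the cut-vertex status of every single vertex; as $G_1$ and $G_3$ share the same vertex set, this yields at once the equality of their numbers of cut vertices. The decisive structural remark is that $G_3$ arises from $G_1$ by deleting the edges $u_1u_3,\dots,u_1u_l$ and inserting the edges $w'u_3,\dots,w'u_l$, so that every edge that changes is incident with a vertex of $S:=\{u_1,u_2,\dots,u_l,w'\}$. Hence for any vertex $z\notin S$ the subgraph that $z$ would separate is the same in $G_1$ and in $G_3$ --- note that in either graph the left block $\{u_1,\dots,u_l\}\cup V(L_2)\cup\dots\cup V(L_l)$ is attached to $M$ only within $\{v_1,w'\}$, a pair of adjacent vertices of $M$, so it lies entirely on one side of any such $z$ --- and the very case distinction used in the proof of Lemma~\ref{2block} shows that $z$ is a cut vertex of $G_1$ if and only if it is one of $G_3$. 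I would therefore deal in a single paragraph with every $z$ lying in $K_r$, in the interiors of $R_2,\dots,R_r$ or of $L_2,\dots,L_l$, or in $V(M)\setminus\{v_1,w'\}$.

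It then remains to inspect the vertices of $S$. For each $u_j$ with $j\ge 2$ I would verify that $u_j$ is a cut vertex of $G_3$ exactly when $L_j$ is nontrivial (that is, $|V(L_j)|>1$), which is the same condition as in $G_1$: after deleting $u_j$ from $G_3$, the remaining vertices of $\{u_2,\dots,u_l\}$ together with $v_1$ stay attached to $M$ (through the surviving edges $u_1u_2$ and $w'u_i$ together with the edge $v_1w'$), so that nothing except $L_j-x_j$ can be split off. For $w'$ I would note that in $G_3$ the left block is still joined to $M$ only at $v_1$, through the edge $u_1u_2$, exactly as in $G_1$; consequently $G_3-w'$ is disconnected if and only if $w'$ separates $v_1$ from $v_2$ within $M$, which is precisely the criterion deciding whether $w'$ is a cut vertex of $G_1$. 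The requirement that $w'$ lie on a shortest $v_1$--$v_2$ path of $G_2$ is what guarantees $w'\ne v_1$ and places $w'$ properly on the $v_2$-side, keeping this comparison transparent.

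The heart of the matter, which I expect to be the main obstacle, is the vertex $u_1=v_1$. In $G_1$ it is a cut vertex because it is the only vertex through which the block $K_l$ and its pendant graphs $L_j$ meet the rest of $G_1$. In $G_3$ this is no longer automatic: the new edges $w'u_3,\dots,w'u_l$ reconnect $\{u_2,\dots,u_l\}$ to $M$ at $w'$, so that $u_1$ has ceased to be the unique bridge between the two sides. In $G_3$ the former $K_l$ has in fact been absorbed, via the cycle $v_1\,u_2\,u_3\,w'\,v_1$, into the block of $M$ carrying the edge $v_1w'$, and the task is to show that deleting $u_1$ nevertheless still disconnects $G_3$. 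I would isolate this as a separate claim and prove it by exhibiting a portion of $M$ that is cut off from the $v_2$-side once $u_1$ is removed; this is exactly the step at which the internal block structure of $M$ around $v_1$ must be invoked, to certify that $u_1$ does not become an interior vertex of the enlarged block. Securing this claim and then combining it with the routine cases of the first two paragraphs yields $c(G_1)=c(G_3)$.
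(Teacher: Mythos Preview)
Your proposal addresses only the first conclusion of the lemma, $c(G_1)=c(G_3)$, and says nothing whatsoever about the second conclusion $\N(G_3)>\N(G_1)$. That inequality is the substantive part of the lemma and the one actually invoked in the proof of Theorem~\ref{Main1cut}; the hypothesis $\N(R)_{w_1}\ge \N(L)_{u_1}$ that you never mention is there precisely to drive it. In the paper the argument for $\N(G_3)>\N(G_1)$ occupies most of the proof: one computes $\N(G_1)$ and $\N(G_2)$ explicitly by partitioning $V(G_1)$ into the pieces $L_1,\dots,L_l$ (with $L_1$ the subgraph induced by $V(M)\cup V(R)$), obtains
\[
\N(G_1)-\N(G_2)=\N(L_2)_{x_2}\Big(\prod_{j=3}^{l}(1+\N(L_j)_{x_j})-1\Big),
\]
and then bounds $\N(G_3)-\N(G_2)$ from below by exhibiting, for each nonempty $S_1\subseteq V(L_3)\cup\cdots\cup V(L_l)$ meeting the $x_j$'s appropriately and each $S_2\subseteq V(R')$ containing $w'$, a set $S_1\cup S_2$ that is connected in $G_3$ but disconnected in $G_2$. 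The assumption $\N(R)_{w_1}\ge \N(L)_{u_1}$ is used at the very end to compare the resulting product with $\N(G_1)-\N(G_2)$. None of this appears in your plan.

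Even restricted to $c(G_1)=c(G_3)$, your treatment of the vertex $u_1=v_1$ is not a proof but a promissory note: you say you will ``exhibit a portion of $M$ that is cut off from the $v_2$-side once $u_1$ is removed'', yet you give no reason such a portion must exist, and nothing in the hypotheses of the lemma forces $v_1$ to be a cut vertex of $M$. The paper sidesteps a direct comparison of $G_1$ with $G_3$ by passing through $G_2$: it first argues $c(G_1)=c(G_2)$ (since in $G_2$ the only route from $\{u_2,\dots,u_l\}$ to $M$ is through $u_1$), and then argues $c(G_2)=c(G_3)$ by analysing how the added edges $w'u_3,\dots,w'u_l$ affect the components of $G_2-\theta$ for each cut vertex $\theta$, treating separately the case $\theta\in V(M)\setminus\{v_1,v_2\}$. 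Your direct $G_1\to G_3$ route is conceptually fine for the vertices away from $S=\{u_1,\dots,u_l,w'\}$, but the crucial case $u_1$ needs a genuine argument, not an appeal to unspecified ``internal block structure of $M$''.
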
	

\begin{proof}
It is clear by construction that $u_1\in V(G_1)$ remains a cut vertex of $G_2$. This is because $u_1$ is adjacent to $u_2$, and $u_2$ is adjacent to no vertex of $G_2$ outside $V(L_2)\cup \{u_1\}$ in $G_2$. Thus, all cut (resp. non cut) vertices of $G_1$ remain cut (resp. non cut) vertices of $G_2$. Therefore, we have $c(G_1)=c(G_2)$. On the other hand, since edges are only added between $w'$ and the vertices $u_3,\ldots,u_l$ in $G_2$ to obtain $G_3$, it is clear that the following hold:
\begin{itemize}
\item All non cut vertices of $G_2$ remain non cut vertices of $G_3$;
\item All cut vertices of $G_2$ that do not belong to $V(M)- \{v_1,v_2\}$ remain cut vertices of $G_3$.
\end{itemize}
Let $\theta$ be a cut vertex of $G_2$ such that $\theta \in V(M)-\{v_1,v_2\}$. We show that $\theta$ is also a cut vertex of $G_3$. If $\theta=w'$, then $G_3-\theta$ and $G_2-\theta$ are isomorphic graphs. So assume that $\theta\neq w'$. Then $w'$ must belong to the component, say $C$ of $G_2-\theta$ that contains $v_1$, since otherwise, every $v_1-w'$ path must pass through $\theta$. In particular, we get $\theta \in \{v_1,w'\}$ as $v_1w'$ is an edge of $G_2$: this is a contradiction to the choice of $\theta$. Hence, $w' \in V(C)$.

Note that $C$ also contains all of $u_3,\ldots,u_l$ since $\theta \in V(M)-\{v_1,v_2\}$ and $C$ is a component of $G_2-\theta$ that contains $u_1(=v_1)$. Since $w' \in V(C)$, we then deduce that all other (different from $C$) components of $G_2-\theta$ remain components of $G_3-\theta$. This proves that $\theta$ is indeed a cut vertex of $G_3$. In particular, we get $c(G_3)=c(G_2)=c(G_1)$.

\medskip
Let $x_1 \in \{u_1,v_1\}$ and denote by $L_1$ the graph induced by $V(M)\cup V(R)$ in $G_1$. Then the vertex set of $G_1$ can be partitioned into $V(L_1),\ldots, V(L_l)$. Thus, for a subset $S\subseteq V(G_1)$ containing a vertex of $V(L_i)$ and a vertex of $V(L_j)$, where $i\neq j$ to induce a connected graph in $G_1$, it is necessary to have $x_i,x_j \in S$. Therefore, we get
\begin{align*}
\N(G_1)=\sum_{j=1}^l \N(L_j -x_j) + \prod_{j=1}^l (1+\N(L_j)_{x_j})-1
\end{align*}
as a formula for the number of connected induced subgraphs of $G_1$. Likewise, denote by $L'$ the graph induced by $V(L_1)\cup V(L_2)$ in $G_2$. The set $V(G_2)$ can also be partitioned into $V(L'), V(L_3),\ldots, V(L_l)$. Thus, we get
\begin{align*}
\N(G_2)=\N(L' -x_1)+ \sum_{j=3}^l \N(L_j -x_j) + (1+\N(L')_{x_1})\prod_{j=3}^l (1+\N(L_j)_{x_j})-1
\end{align*}
in the same way as for $G_1$. On the other hand, we have
\begin{align*}
\N(L' -x_1)=
\N(L_2)+\N(L_1 -x_1)~\text{and}~\N(L')_{x_1}=\N(L_1)_{x_1}(1+\N(L_2)_{x_2})\,.
\end{align*}
Therefore, we obtain
\begin{align*}
\N(G_1)-\N(G_2)=\N(L_2)_{x_2} \Big(\prod_{j=3}^l (1+\N(L_j)_{x_j}) -1\Big)
\end{align*}
after simplification. By construction, $G_3$ contains $G_2$ as a subgraph. We now find a lower bound on $\N(G_3)-\N(G_2)$ by solely counting certain subsets of $V(G_3)=V(G_2)$ that induce a connected graph in $G_3$ and a disconnected graph in $G_2$. Let $S_1\neq \emptyset$ be a subset of $V(L_3)\cup \cdots \cup V(L_l)$ such that $S_1$ contains $x_j$ whenever $S_1$ contains an element of $V(L_j)$. Recall that $P$ is a fixed shortest $v_1-v_2$ path in $G_2$ that contains $w'$. Denote by $R'$ the graph induced by $V(R)\cup V(P-{v_1})$ in $G_2$. Let $S_2 \neq \emptyset$ a subset of $V(R')$ that contains $w'$. Since $w'\neq v_1$ is adjacent to all of $x_3,\ldots,x_l$ in $G_3$, we deduce that $S_1\cup S_2$ always induces a connected graph in $G_3$. However, the graph induced by $S_1\cup S_2$ in $G_2$ is always disconnected as there is no edge from an element of $S_1$ to an element of $S_2$ in $G_2$. Therefore, we obtain a total of 
\begin{align*}
\N(R')_{w'} \Big(\prod_{j=3}^l (1+\N(L_j)_{x_j}) -1\Big)
\end{align*}
such sets $S_1\cup S_2$ inducing a connected graph in $G_3$ and a disconnected graph in $G_2$. Since $\N(L)_{u_1}> 1+\N(L_2)_{u_2}$, we use the trivial inequality $\N(R')_{w'}\geq \N(R)_{w_1}$ alongside the assumption $\N(R)_{w_1}\geq \N(L)_{u_1}$ to derive that
\begin{align*}
\N(G_3)-\N(G_2)\geq \N(R)_{w_1} \Big(\prod_{j=3}^l (1+\N(L_j)_{x_j}) -1\Big) >(1+\N(L_2)_{u_2}) \Big(\prod_{j=3}^l (1+\N(L_j)_{x_j}) -1\Big)\,.
\end{align*}
This implies that
\begin{align*}
\N(G_3)-\N(G_1)> \prod_{j=3}^l (1+\N(L_j)_{x_j}) -1 \geq  \N(L_3)_{x_3}> 0\,,
\end{align*}
completing the proof.
\end{proof}

It is required in Lemma~\ref{Block.Two.cutCenter} that $|V(M)|>1$. Lemma~\ref{Block.Two.cutCenterM=1} below covers the special case where  $|V(M)|=1$.

\begin{lemma}\label{Block.Two.cutCenterM=1}
Let $K_l,K_r$ be two complete graphs with (disjoint) vertex sets
	\begin{align*}
	V(K_l)=\{u_1,\ldots,u_l\},~V(K_r)=\{w_1,\ldots,w_r\}
	\end{align*}
	for some $l,r\geq 3$. Consider $l+r-2$ vertex disjoint connected graphs $L_2,\ldots,L_l, R_2,\ldots,R_r$ such that $x_j \in V(L_j)$ and $z_j \in V(R_j)$ for all $j \in \{2,\ldots,r\}$. Identify $u_1$ with $w_1$, $u_j$ with $x_j$, and $w_j$ with $z_j$ for all $j\neq 1$. Denote by $G_1$ the resulting graph. Let $G_2$ be obtained from $G_1$ by removing the edges joining $u_1$ to a neighbour, except $u_2$, of $u_1$ in $K_l$. Let $G_3$ be constructed from $G_2$ by making the graph induced by the set $V(K_l-u_1-u_2)\cup V(K_r-w_1)$ a complete graph. We have $c(G_1)=c(G_3)$. Furthermore, assume that $\N(R_2)_{z_2} \geq \N(L_2)_{x_2}$. Then we have
	\begin{align*}
	N(G_3)>N(G_1)\,.
	\end{align*}
\end{lemma}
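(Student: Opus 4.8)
The plan is to follow the template of the proof of Lemma~\ref{Block.Two.cutCenter}, specialised to the degenerate case in which the connecting graph $M$ has collapsed to the single common vertex $u_1=w_1$: the branch supported on $\{v_1\}\cup V(R)$ there is now simply the graph $R$ induced by $\{w_1\}\cup V(R_2)\cup\cdots\cup V(R_r)$ in $G_1$. I would treat the two assertions of the lemma separately.

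For $c(G_1)=c(G_3)$ I would argue entirely with Lemmas~\ref{AddEdgeBlock} and~\ref{2block}. Since $L_2$ hangs off $K_l$ only at $u_2$, the edge $u_1u_2$ is a bridge of $G_2$ (and of $G_3$), so $\{u_1,u_2\}$ is a block of each; moreover $\{u_1,u_3,\ldots,u_l\}$ induces a block $K_l-u_2$ of $G_2$, $K_r$ is a block of $G_2$, and these three blocks pairwise intersect only in $u_1$. Reconstructing $G_1$ from $G_2$ amounts to re-inserting the edges $u_2u_3,\ldots,u_2u_l$: the first of them joins a neighbour of $u_1$ in the block $\{u_1,u_2\}$ to a neighbour of $u_1$ in the block $K_l-u_2$, so Lemma~\ref{2block} applies (with $K_r$ playing the role of the third block through $u_1$), and each subsequent edge lies inside the already-merged block, so Lemma~\ref{AddEdgeBlock} applies; hence $c(G_1)=c(G_2)$. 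Running the identical argument on the edges $u_iw_j$ ($3\le i\le l$, $2\le j\le r$) that are inserted to form $G_3$ from $G_2$ gives $c(G_2)=c(G_3)$, and therefore $c(G_1)=c(G_3)$.

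For the inequality I would pass through $G_2$. Viewing $G_1$ as the clique $K_l$ carrying the pendant branches $R,L_2,\ldots,L_l$ at $u_1,u_2,\ldots,u_l$ and classifying connected induced subgraphs by which vertices of $K_l$ (respectively of $K_{l-1}:=K_l-u_2$) they meet, an identical computation to the one in the proof of Lemma~\ref{Block.Two.cutCenter} yields
\[
\N(G_1)-\N(G_2)=\N(L_2)_{x_2}\Big(\prod_{j=3}^{l}\big(1+\N(L_j)_{x_j}\big)-1\Big).
\]
Since $G_3$ arises from $G_2$ by adding edges, $\N(G_3)-\N(G_2)$ is the number of vertex sets inducing a connected subgraph of $G_3$ but not of $G_2$. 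Every such set avoids $u_1$, $u_2$ and all of $V(L_2)$ (otherwise whether the new edges are present is immaterial to its connectedness), so it splits as $S_1\cup S_2$, where $S_1$ is a nonempty connected-in-$G_3$ subset of $V(L_3)\cup\cdots\cup V(L_l)$ meeting $\{u_3,\ldots,u_l\}$, and $S_2$ is an analogous subset of $V(R_2)\cup\cdots\cup V(R_r)$ meeting $\{w_2,\ldots,w_r\}$; conversely every such pair produces one new connected induced subgraph, joined up through a new edge $u_iw_j$. There are $\prod_{j=3}^{l}(1+\N(L_j)_{x_j})-1$ choices for $S_1$ and $\prod_{j=2}^{r}(1+\N(R_j)_{z_j})-1$ for $S_2$, so subtracting the previous display gives
\[
\N(G_3)-\N(G_1) \ge \Big(\prod_{j=3}^{l}\big(1+\N(L_j)_{x_j}\big)-1\Big)\Big(\prod_{j=2}^{r}\big(1+\N(R_j)_{z_j}\big)-1-\N(L_2)_{x_2}\Big).
\]
The first factor is at least $1$ because $l\ge3$, and because $r\ge3$ the second product has at least two factors that are $\ge2$, one of them being $1+\N(R_2)_{z_2}$; hence, using $\N(R_2)_{z_2}\ge\N(L_2)_{x_2}$, it is at least $2(1+\N(L_2)_{x_2})$, so the second factor is at least $1+\N(L_2)_{x_2}\ge2$. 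Thus $\N(G_3)-\N(G_1)\ge2>0$.

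The step I expect to be the main obstacle is the bookkeeping behind the second displayed inequality: one must check carefully that a connected induced subgraph of $G_3$ which is disconnected in $G_2$ cannot meet $\{u_1,u_2\}\cup V(L_2)$, that the decomposition $S=S_1\cup S_2$ is then forced, and that distinct admissible pairs $(S_1,S_2)$ give distinct subgraphs. Once this count is in place everything else is routine, and it is exactly there that the hypotheses $l,r\ge3$ and $\N(R_2)_{z_2}\ge\N(L_2)_{x_2}$ are used.
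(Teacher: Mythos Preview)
Your proof is correct and, for the main inequality, follows essentially the same route as the paper: compute $\N(G_1)-\N(G_2)=\N(L_2)_{x_2}\big(\prod_{j=3}^{l}(1+\N(L_j)_{x_j})-1\big)$, bound $\N(G_3)-\N(G_2)$ below by counting the $S_1\cup S_2$ configurations, and then use $r\ge 3$ together with $\N(R_2)_{z_2}\ge \N(L_2)_{x_2}$ to finish. You actually go a step further than the paper, arguing that the $(S_1,S_2)$ description exhausts all vertex sets that become connected in $G_3$; the paper is content with the inequality $\N(G_3)-\N(G_2)\ge\big(\prod_{j=2}^{r}(1+\N(R_j)_{z_j})-1\big)\big(\prod_{j=3}^{l}(1+\N(L_j)_{x_j})-1\big)$ and does not justify the reverse inclusion. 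Your exhaustiveness claim is correct (any $S$ containing $u_1$ stays connected in $G_2$ because every new edge $u_iw_j$ can be rerouted through $u_1$, and any $S$ meeting $\{u_2\}\cup V(L_2)$ but not $u_1$ lies entirely in $\{u_2\}\cup V(L_2)$), so writing $\ge$ is harmless but conservative.

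The one genuine methodological difference is the cut-vertex part. The paper handles $c(G_1)=c(G_3)$ by the direct vertex-by-vertex verification in the style of the proof of Lemma~\ref{Block.Two.cutCenter}. You instead reduce it to repeated applications of Lemmas~\ref{AddEdgeBlock} and~\ref{2block}, exploiting the fact that in $G_2$ the three blocks $\{u_1,u_2\}$, $K_{l-1}=\{u_1,u_3,\dots,u_l\}$ and $K_r$ all pass through $u_1$. This is a clean alternative: the first inserted edge in each direction is covered by Lemma~\ref{2block} (with the untouched third block witnessing that $u_1$ remains a cut vertex), and every subsequent edge lies inside the merged block, so Lemma~\ref{AddEdgeBlock} applies. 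Your approach buys modularity (no ad hoc case analysis), while the paper's direct check avoids having to track the evolving block structure.
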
	

\begin{proof}
The proof is done in analogy to Lemma~\ref{Block.Two.cutCenter} with the following simple modification. Let $R$ be the graph induced by $\{w_1\}\cup V(R_2)\cup \cdots \cup V(R_r)$ in $G_1$. Denote by $R'$ the graph induced by $V(R)\cup V(L_2)$ in $G_2$. We have
	\begin{align*}
	\N(G_1)&=\N(R -w_1)+\sum_{j=2}^l \N(L_j -x_j) + (1+\N(R)_{w_1})\prod_{j=2}^l (1+\N(L_j)_{x_j})-1\,,\\
	\N(G_2)&=\N(R' -w_1)+\sum_{j=3}^l \N(L_j -x_j) + (1+\N(R')_{w_1})\prod_{j=3}^l (1+\N(L_j)_{x_j})-1\,,
	\end{align*}
and
	\begin{align*}
	\N(R' -w_1)=\N(L_2)+\N(R -w_1),~~\N(R')_{w_1}=\N(R)_{w_1}(1+\N(L_2)_{x_2})\,.
	\end{align*}
It follows that
	\begin{align*}
	\N(G_1)-\N(G_2)=\N(L_2)_{x_2} \Big(\prod_{j=3}^l (1+\N(L_j)_{x_j}) -1\Big)\,.
	\end{align*}
Clearly, every subgraph of $G_2$ is also a subgraph of $G_3$. Let $S_1\neq \emptyset$ be a subset of $V(L_3)\cup \cdots \cup V(L_l)$ such that $S_1$ contains $x_j$ whenever $S_1$ contains an element of $V(L_j)$. Likewise, let $S_2 \neq \emptyset$ be a subset of $V(R-w_1)$ such that $S_2$ contains $z_j$ whenever $S_2$ contains an element of $V(R_j)$. The set $S_1\cup S_2$ always induces a disconnected graph in $G_2$, and a connected graph in $G_3$. Therefore, we get
	\begin{align*}
		\N(G_3)-\N(G_2)\geq \Big(\prod_{j=2}^r (1+\N(R_j)_{z_j}) -1\Big) \Big(\prod_{j=3}^l (1+\N(L_j)_{x_j}) -1\Big)\,.
	\end{align*}
On the other hand, we have
\begin{align*}
\prod_{j=2}^r (1+\N(R_j)_{z_j}) -1 \geq (1+\N(R_2)_{z_2})(1+\N(R_3)_{z_3}) -1\geq 1+ 2\N(R_2)_{z_2}\,.
\end{align*}	
Hence, using the assumption	$\N(R_2)_{z_2} \geq \N(L_2)_{x_2}$, we deduce that
\begin{align*}
\N(G_3)-\N(G_2)\geq (1+ 2\N(L_2)_{x_2}) \Big(\prod_{j=3}^l (1+\N(L_j)_{x_j}) -1\Big)\,,
\end{align*}
which implies that $\N(G_3)-\N(G_1)>0$. This completes the proof of the lemma.
\end{proof}

We are now ready to formulate a characterisation of all graphs maximising the number of connected induced subgraphs in the set $\mathcal{H}(n,c)$. At this point, it can be recalled that $G(n_1;\ldots; n_q)$ is the graph constructed as follows: we consider $q+1>3$ pairwise vertex disjoint graphs $K_q,P_{n_1},\ldots, P_{n_q}$ such that $n_1\leq n_2,~n_2>1$ and $V(K_q)=\{v_1,\ldots,v_q\}$. For every $j\in \{1,\ldots,q\}$, we let $u_j$ be a leaf of $P_{n_j}$ and identify $u_j$ with $v_j$. 

\begin{theorem}\label{Main1cut}
Let $n>1$ and $0\leq c\leq n-2$. Denote by $t$ the residue of $n$ modulo $n-c$, and set $s=\lfloor n/(n-c) \rfloor$. We have
\begin{align*}
\N(H)\leq (n-c-t)\binom{s}{2}+t\binom{s+1}{2}+
(s+1)^{n-c-t}(s+2)^t -1
\end{align*}
for all $H \in \mathcal{H}(n,c)$. Equality holds if and only if $H$ is isomorphic to the graph $G(s;\ldots;s; s+1;\ldots; s+1)$ ($n-c-t$ copies of $s$ followed by $t$ copies of $s+1$).
\end{theorem}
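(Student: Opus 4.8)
The plan is to show that any graph $H\in\mathcal H(n,c)$ attaining the maximum must look like $G(n_1;\dots;n_q)$ for a suitable $q$ and a balanced partition, and then invoke the quantitative lemmas to pin down the shape and compute the value. I would proceed in three stages: (1) reduce an arbitrary extremal $H$ to a graph built from a single complete block $K_q$ with paths hanging off distinct vertices; (2) use Lemma~\ref{PathOrder} to show the path lengths must be as balanced as possible; (3) determine the optimal $q$ and read off the closed formula.

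\textbf{Stage 1: reducing to $G(n_1;\dots;n_q)$.} Fix an extremal $H$. First observe that adding edges inside a block (Lemma~\ref{AddEdgeBlock}) keeps $c(H)$ fixed and strictly increases $\N(H)$ whenever the block is not already complete (any new subset of that block that was disconnected becomes connected), so every block of $H$ is a clique. Next, whenever a cut vertex $w$ lies in three or more blocks, or in two blocks at least one of which is a nontrivial clique, I would apply the merging transformations of Lemmas~\ref{2block}, \ref{Block.two.cutPend}, \ref{Block.Two.cutCenter}, and \ref{Block.Two.cutCenterM=1} — these preserve $c$ and strictly increase $\N$ — to conclude that in the extremal configuration every cut vertex belongs to exactly two blocks, and at most one nontrivial clique occurs in the whole block tree. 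The remaining blocks must then be copies of $P_2$, i.e. the block tree is a single clique $K_q$ with paths attached; moreover Lemma~\ref{oneCut} rules out a second nontrivial clique by showing consolidating two cliques sharing a cut vertex into one big clique plus a pendant path increases $\N$. One also checks the attachment points of the paths on $K_q$ are distinct vertices (two paths on the same vertex of the clique can be merged/rebalanced into Lemma~\ref{PathOrder}'s situation with the clique vertex playing the role of $u=v$), so $H\cong G(n_1;\dots;n_q)$ with $n_1+\dots+n_q=n$ (each $n_j\geq1$), and $c=\sum_j(n_j-1)+[\,\text{the $q$ clique vertices are cut iff $n_j\geq2$}\,]$; bookkeeping gives $c=n-q$, hence $q=n-c$.

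\textbf{Stage 2 and 3: balancing and the formula.} With $q=n-c$ fixed, Lemma~\ref{PathOrder} (applied with $H$ the ``rest of $G(n_1;\dots;n_q)$'', for which $\N(H)_{u,v}>1$ because $u,v$ lie in a common clique of size $\geq3$) shows $\N$ strictly increases under the balancing move $(n_1,n_2)\mapsto(n_1+1,n_2-1)$ whenever $|n_1-n_2|\geq2$, so at the optimum all $n_j\in\{s,s+1\}$ with $s=\lfloor n/(n-c)\rfloor$ and exactly $t\equiv n\pmod{n-c}$ of them equal to $s+1$; this is precisely $G(s;\dots;s;s+1;\dots;s+1)$, and uniqueness follows since every step was strict. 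Finally I would compute $\N(G(s;\dots;s;s+1;\dots;s+1))$ directly by the partition-into-branches identity already used in the proof of Lemma~\ref{Block.Two.cutCenter}: $\N(G(n_1;\dots;n_q))=\sum_{j}\N(P_{n_j-1})+\prod_{j}(1+\N(P_{n_j})_{v_j})-1$, where $\N(P_{n_j})_{v_j}=n_j$ since $v_j$ is a leaf of $P_{n_j}$, and $\N(P_{m})=\binom{m+1}{2}$; substituting $n-c-t$ values $s$ and $t$ values $s+1$ gives exactly $(n-c-t)\binom{s}{2}+t\binom{s+1}{2}+(s+1)^{n-c-t}(s+2)^t-1$.

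\textbf{Main obstacle.} The genuinely delicate part is Stage~1: organising the block-merging transformations into a terminating argument that leaves exactly one clique. Each of Lemmas~\ref{Block.two.cutPend}–\ref{Block.Two.cutCenterM=1} handles a specific local configuration (a clique sharing a cut vertex with a pendant-type block, with an adjacent clique through a path, etc.), and one must verify these cases are exhaustive for a non-extremal $H$ and that applying them strictly decreases some potential (e.g. the number of blocks, or the number of nontrivial cliques) so the process halts at $G(n_1;\dots;n_q)$. Handling small degenerate cases ($c=0$, where the answer is $K_n$; $q=3$; paths of length $1$) and confirming the $q=n-c$ count in all of them also needs care, but is routine once the structural reduction is in place.
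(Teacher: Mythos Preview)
Your proposal is correct and follows essentially the same approach as the paper: complete the blocks via Lemma~\ref{AddEdgeBlock}, reduce each cut vertex to two incident blocks via Lemma~\ref{2block}, use Lemmas~\ref{oneCut}, \ref{Block.two.cutPend}, \ref{Block.Two.cutCenter}, \ref{Block.Two.cutCenterM=1} to force all blocks but one to be $P_2$, then balance the pendant paths with Lemma~\ref{PathOrder} and compute. The paper organises Stage~1 slightly differently from your sketch---it splits according to whether a block $K_l$ with $l\ge 3$ contains one cut vertex (then Lemma~\ref{Block.two.cutPend}) or at least two, and in the latter case whether a second large block meets $K_l$ (Lemma~\ref{Block.Two.cutCenterM=1}) or not (Lemma~\ref{Block.Two.cutCenter})---but this is just a cleaner way to make the case analysis you flagged as the main obstacle exhaustive. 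One small slip: your parenthetical about handling ``two paths on the same vertex'' via Lemma~\ref{PathOrder} with $u=v$ does not work (that lemma needs $u\neq v$), but the situation never arises once Lemma~\ref{2block} has been applied, so the conclusion that attachment points are distinct is already secured.
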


\begin{proof}
First off, note that if $B$ is a block of a non-trivial connected graph $G$, then $B$ is necessarily `surrounded' by  $|V(B)|$ (possibly trivial) connected induced subgraphs of $G$ whose vertex sets are pairwise disjoint. In other words, the removal of all edges of $B$ in $G$ must leave $|V(B)|$ connected graphs.

Let $\mathbb{H} \in \mathcal{H}(n,c)$ be a graph with order $n$ and $c$ cut vertices that maximises the number of connected induced subgraphs. We know, by repeatedly applying Lemma~\ref{AddEdgeBlock}, that all blocks of $\mathbb{H}$ are non-trivial complete graphs. We are going to prove that all blocks of $\mathbb{H}$, except possibly only one, are in fact of order $2$. The statement is obvious for $c=0$ since $\mathbb{H}=K_n$ in this case. So we assume that $c\geq 1$. By repeatedly invoking Lemma~\ref{2block}, we can further assume that every cut vertex of $G$ belongs to precisely two distinct blocks of $\mathbb{H}$. If $c=1$, then $\mathbb{H}$ has precisely two blocks, say $K_l$ and $K_{n+1-l}$ for some $2\leq l\leq (n+1)/2$. Thus, in this case, the statement holds true by Lemma~\ref{oneCut}. So we assume that $c\geq 2$. Consider a block $K_l$ of $\mathbb{H}$ such that $l\geq 3$. We consider two separate cases depending on whether $K_l$ contains one or more cut vertices of $\mathbb{H}$.

Assume that $K_l$ contains precisely one cut vertex, say $w_1$ of $\mathbb{H}$. Let $w_2 \neq w_1$ be another cut vertex of $\mathbb{H}$ such that both $w_1$ and $w_2$ belong the the same block $K_r$ of $\mathbb{H}$. Thus $w_2$ also belongs to a further block $B$ of $\mathbb{H}$ different from $K_r$. This kind of description for $\mathbb{H}$ yields exactly the graph $G_1$ constructed in Lemma~\ref{Block.two.cutPend}, where the graph $R_2$ in Lemma~\ref{Block.two.cutPend} contains $B$ as a subgraph and $w_2 \in V(R_2)$. Note that the graph transformation described in Lemma~\ref{Block.two.cutPend} preserves the number of cut vertices when passing from $G_1$ to $G_3$ but creates a new block of order $2$ in $G_3$. It is shown in Lemma~\ref{Block.two.cutPend} that $\N(G_3)>\N(G_1)=\N(\mathbb{H})$. However, this is impossible from the choice of $\mathbb{H}$. Hence, we must have $l=2$.

Assume that $K_l$ contains two or more cut vertices, say $u_1,u_2$ of $\mathbb{H}$. If there is no other block that contains two or more cut vertices of $\mathbb{H}$, then we are done immediately by Lemma~\ref{Block.two.cutPend}. This is because Lemma~\ref{Block.two.cutPend} states that in a `maximal' graph, all blocks that contain only one cut vertex must be of order $2$.

Otherwise, let $K_r \neq K_l$ be another block containing two or more cut vertices, say $w_1,w_2$ of $\mathbb{H}$. We can assume that $r\geq 3$ since otherwise, there is nothing more to prove.
We observe two possible situations:
\begin{enumerate}[{Case} 1:]
\item $V(K_l)\cap V(K_r)=\emptyset$. In this case, there exists a non-trivial connected graph $M$ that contains both $u_1,w_1$ and no other vertex of $V(K_l)\cup V(K_r)$. In particular, $\mathbb{H}$ can be described in the same way as the graph $G_1$ defined in Lemma~\ref{Block.Two.cutCenter} (see Figure~\ref{PictG1G2}), where $L$ and $R$ are the two components of $\mathbb{H}-V(M-u_1-w_1)$ that contain $u_1$ and $w_1$, respectively. Without loss of generality, say $\N(R)_{w_1}\geq \N(L)_{u_1}$. Then Lemma~\ref{Block.Two.cutCenter} shows the existence of another graph $G_3$ with order $n$ and $c$ cut vertices satisfying $\N(G_3)>\N(G_1)=\N(\mathbb{H})$, which is indeed a contradiction. 
\item $V(K_l)\cap V(K_r)\neq \emptyset$. Since $K_l$ and $K_r$ are both blocks of $\mathbb{H}$, they can only have one common vertex, which is therefore a cut vertex of $\mathbb{H}$. Thus, without loss of generality, say $V(K_l)\cap V(K_r)=\{u_1=w_1\}$. The graph $\mathbb{H}$ can then be given the same description as the graph $G_1$ defined in Lemma~\ref{Block.Two.cutCenterM=1}, where $x_2=u_2\in V(L_2)$ and $z_2=w_2\in V(R_2)$. Without loss of generality, say $\N(R_2)_{z_2}\geq \N(L_2)_{x_2}$. Then Lemma~\ref{Block.Two.cutCenterM=1} applied to $\mathbb{H}=G_1$, which contradicts the choice of $\mathbb{H}$.
\end{enumerate}
Summing up, we have proved that all blocks of $\mathbb{H}$, except possibly only one, are of order $2$. Moreover, every cut vertex of $\mathbb{H}$ belongs to precisely two distinct blocks of $\mathbb{H}$. This then makes it simple to derive the full structure of $\mathbb{H}$. It is easy to see that all blocks of $\mathbb{H}$ are of order $2$ if and only if $c=n-2$ ($\mathbb{H}$ is a path in this case). Assume that $c\leq n-3$ and let $K_q$ be the unique block of $\mathbb{H}$ such that $q>2$. One immediately deduces that $\mathbb{H}$ consists of $K_q$ to which $q$ paths (possibly trivial) $P_{n_1},\ldots, P_{n_q}$ are attached to the vertices $u_1,\ldots,u_q$ of $K_q$, respectively, by identifying $u_j$ with a leaf of $P_{n_j}$ for all $j$. Therefore, we have $(n_1-1)+\cdots + (n_q-1)=c$ and $n_1+\cdots + n_q=n$, i.e. $q=n-c$. To complete the proof of the theorem, we need to find the values of all $n_j$. Lemma~\ref{PathOrder} yields that $n_1,\ldots,n_q$ must all be as equal as possible, i.e.
\begin{align*}
n_1=\cdots =n_{n-c-t}=\lfloor n/(n-c) \rfloor=s~\text{and}~n_{n-c-t+1}=\cdots =n_{n-c}=s +1\,,
\end{align*}
where $t$ is the residue of $n$ modulo $n-c$. Hence, we get
\begin{align*}
\N(\mathbb{H})= (n-c-t)\binom{s}{2}+t\binom{s+1}{2}+(s+1)^{n-c-t}(s+2)^t -1
\end{align*}
as a special case in the proof of Lemma~\ref{PathOrder}. This completes the proof of the theorem.
\end{proof}

\subsection{The minimisation problem}
In this subsection, we consider the special case $c=0$ of the problem of finding those graphs that minimise the number of connected induced subgraphs among all graphs in the set $\mathcal{H}(n,c)$.

Let $n\geq 4$ and $G$ be a graph consisting of the cycle $C_{n-1}$ together with a vertex $z\notin V(C_{n-1})$ which is adjacent to precisely two vertices $x,v \in V(C_{n-1})$. In the sequel, we shall refer to every such graph as \emph{special}.

\begin{lemma}\label{SpeGraph}
If $G$ is a special graph of order $n$, then we have $\N(G)>n^2-n+1=\N(C_n)$.
\end{lemma}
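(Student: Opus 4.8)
The plan is to split the connected induced subgraphs of $G$ according to whether they avoid or contain the extra vertex $z$, to evaluate each part exactly, and then to compare with $\N(C_n)=n^2-n+1$. First I would fix notation: the two vertices $x,v$ split $C_{n-1}$ into two arcs having $a$ and $b$ edges respectively, where $a+b=n-1$ and, without loss of generality, $1\le a\le b$; thus $G$ is the theta graph formed by three internally disjoint $x$--$v$ paths of lengths $a$, $b$ and $2$. Since the connected induced subgraphs of $G$ that avoid $z$ are precisely those of $G-z=C_{n-1}$, we have
\[
\N(G)=\N(C_{n-1})+\N(G)_z=\bigl((n-1)^2-(n-1)+1\bigr)+\N(G)_z .
\]
Moreover $\N(C_n)-\N(C_{n-1})=(n^2-n+1)-\bigl((n-1)^2-(n-1)+1\bigr)=2n-2=2(a+b)$, so it suffices to prove $\N(G)_z>2(a+b)$.

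To compute $\N(G)_z$, observe that a connected induced subgraph of $G$ containing $z$ is determined by the set $S\subseteq V(C_{n-1})$ of its remaining vertices, subject to $G[S\cup\{z\}]$ being connected; since the only neighbours of $z$ are $x$ and $v$, I would distinguish cases according to $S\cap\{x,v\}$.
\begin{itemize}
\item If $S\cap\{x,v\}=\emptyset$, then $S=\emptyset$ (otherwise $z$ is isolated), giving the single subgraph $\{z\}$.
\item If $x\in S$ and $v\notin S$ (or symmetrically $v\in S$, $x\notin S$), then $z$ is a leaf at $x$, so $G[S\cup\{z\}]$ is connected iff $C_{n-1}[S]$ is connected, i.e.\ iff $S$ is a subpath through $x$ of the $(n-2)$-vertex path $C_{n-1}-v$, on which $x$ occupies the $a$-th position from one end; there are $ab$ such subpaths, so each of the two cases contributes $ab$.
\item If $x,v\in S$, then $G[S\cup\{z\}]$ is connected iff every component of $C_{n-1}[S]$ meets $\{x,v\}$. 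Writing $I_A,I_B$ for the intersections of $S$ with the interiors of the two arcs (paths on $a-1$ and $b-1$ vertices), this condition decouples into ``every run of $I_A$ contains an endpoint of its path'' and the analogous statement for $I_B$. A routine count shows there are $1+\binom{a}{2}$ admissible choices for $I_A$ and $1+\binom{b}{2}$ for $I_B$, contributing $\bigl(1+\binom{a}{2}\bigr)\bigl(1+\binom{b}{2}\bigr)$ in all.
\end{itemize}
Hence $\N(G)_z=1+2ab+\bigl(1+\binom{a}{2}\bigr)\bigl(1+\binom{b}{2}\bigr)$.

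It then remains to verify $\N(G)_z>2(a+b)$, i.e.
\[
2ab+\Bigl(1+\tbinom{a}{2}\Bigr)\Bigl(1+\tbinom{b}{2}\Bigr)>2a+2b-1 .
\]
Since $2ab-(2a+2b-2)=2(a-1)(b-1)\ge 0$ and $\bigl(1+\binom{a}{2}\bigr)\bigl(1+\binom{b}{2}\bigr)\ge 1$, the left-hand side is at least $2a+2b-1$, with equality forcing both $(a-1)(b-1)=0$ and $\binom{a}{2}=\binom{b}{2}=0$, hence $a=b=1$ and $n=3$. As a special graph has order $n\ge 4$, the inequality is strict, giving $\N(G)>n^2-n+1=\N(C_n)$.

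The only delicate point will be the evaluation of $\N(G)_z$ in the case $x,v\in S$: one has to characterise correctly which subsets $S$ make $G[S\cup\{z\}]$ connected --- in particular, that $z$ can merge at most the two components of $C_{n-1}[S]$ containing $x$ and $v$, leaving any other component isolated --- and then carry out the bookkeeping, including the corner case $S=V(C_{n-1})$ and the degenerate arc when $a=1$. Every other step is a short computation.
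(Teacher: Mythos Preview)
Your proof is correct. It differs from the paper's in that you compute $\N(G)_z$ exactly via the case split on $S\cap\{x,v\}$, obtaining the closed form $1+2ab+\bigl(1+\binom{a}{2}\bigr)\bigl(1+\binom{b}{2}\bigr)$, and then compare this with $2(a+b)$. The paper instead takes a much coarser route: it writes $\N(G)_z=1+\N(G-v)_{x,z}+\N(G-x)_{v,z}+\N(G)_{x,v,z}$, observes that $G-v-z$ and $G-x-z$ are paths on $n-2$ vertices (so each of the middle terms is at least $n-2$ by the path lemma), and trivially bounds $\N(G)_{x,v,z}\ge 2$. This already gives $\N(G)_z\ge 2n-1$, hence $\N(G)\ge (n-1)(n-2)+1+(2n-1)=n^2-n+2>n^2-n+1$. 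The paper's argument is shorter and avoids the bookkeeping in the $\{x,v\}\subseteq S$ case; your argument, on the other hand, yields an exact formula for $\N(G)$ as a function of the arc lengths $a,b$, which is strictly more informative and would be the natural starting point if one wanted sharper estimates.
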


\begin{proof}
Let $G$ be a special graph of order $n$. A simple lower bound on $\N(G)$ can be obtained as follows: a $z$-containing connected induced subgraph of $G$ is either the single vertex $z$, or consists of $z$ and at least a neighbour of $z$ in $G$. Thus, we get
\begin{align*}
\N(G)_z=1+\N(G-v)_{x,z}+\N(G-x)_{v,z}+\N(G)_{x,v,z}\,.
\end{align*}
Since $G-z$ is a cycle and $G-v-z$ as well as $G-x-z$ are paths, we deduce that $$\N(G)_z \geq 1+(n-2)+(n-2)+2=2n-1$$
from which the inequality
\begin{align*}
\N(G)=\N(G-z)+\N(G)_z \geq (n-1)(n-2)+1+2n-1 > n^2-n+1
\end{align*}
follows. 	
\end{proof}

A cut vertex-free connected graph with at least three vertices is also referred to as a $2$-connected graph. 
\begin{theorem}\label{Main2Cut}
For all $n\geq 3$, the cycle $C_n$ has the smallest number of connected induced subgraphs among all graphs in the set $\mathcal{H}(n,0)$.
\end{theorem}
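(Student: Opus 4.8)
The plan is to prove, by induction on $n$, the sharper statement that $\N(G)\ge n^2-n+1$ for every $G\in\mathcal H(n,0)$, with equality only for $G=C_n$; the cases $n\le 4$ are an immediate check. So fix $n$, let $G$ minimise $\N$ over $\mathcal H(n,0)$, and suppose for contradiction that $G\ne C_n$. First I would note that $G$ is minimally $2$-connected: if some edge $e=xy$ could be deleted while keeping the graph $2$-connected, then $G-e\in\mathcal H(n,0)$ and $\N(G-e)\le\N(G)-1$, because deleting $e$ already destroys the connected induced subgraph on $\{x,y\}$ — contradicting the minimality of $G$. (In particular this covers graphs that are a cycle plus a chord, for which one also has the clean direct count $\N(C_n+\text{chord})=n^2-n+1+\binom{d}{2}\binom{n-d}{2}$ with $d,n-d\ge 2$ the two cyclic distances between the chord's ends.)

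Next I would take an open ear decomposition $G=C_0+P_1+\dots+P_k$. Minimality forces every ear to have an internal vertex — a single-edge ear would be deletable, since adjoining ears to a $2$-connected graph preserves $2$-connectivity — and $G\ne C_n$ forces $k\ge 1$. If $k=1$, then $G$ is a theta graph $\Theta_{\ell_1,\ell_2,\ell_3}$ with all $\ell_i\ge 2$ and $\ell_1+\ell_2+\ell_3=n+1$; splitting the connected induced subgraphs according to which of the three branches they meet and whether they contain the two branch vertices yields a closed formula for $\N(\Theta_{\ell_1,\ell_2,\ell_3})$, which one checks to exceed $n^2-n+1$ (Lemma~\ref{SpeGraph} is the template, being essentially the instance $\ell_1=2$). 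If $k\ge 2$, I would peel off the last ear: with $a\ne b$ its endpoints and $m\ge 1$ its number of internal vertices, $G':=G-\{\text{internal vertices of }P_k\}$ lies in $\mathcal H(n-m,0)$ and is not a cycle, and classifying the connected induced subgraphs of $G$ by which interval of internal vertices of $P_k$ they use — and, when that interval runs up to $a$ or $b$, which connected induced subgraph of $G'$ through $a$, through $b$, or through both is glued on — expresses $\N(G)$ through $\N(G')$, $\N(G')_a$, $\N(G')_b$, $\N(G')_{a,b}$ and $m$, in the spirit of Lemma~\ref{PathOrder}. Feeding in the inductive bound $\N(G')>(n-m)^2-(n-m)+1$ together with lower bounds for $\N(G')_a,\N(G')_b$ should then give $\N(G)>n^2-n+1$, contradicting $\N(G)\le\N(C_n)$.

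I expect the real difficulty to lie exactly in that last estimate: the crude bounds $\N(G')_a,\N(G')_b\ge n-m$ and the bare $\N(G')\ge (n-m)^2-(n-m)+2$ fall short when $m$ is small, so one needs a genuinely stronger count of how many connected induced subgraphs of a $2$-connected graph pass through a prescribed vertex or pair of vertices. The cleanest packaging of what is needed — and one that settles the theorem outright — is the auxiliary claim that \emph{a $2$-connected graph on $\nu$ vertices has at least $\nu$ connected induced subgraphs of order $k$ for every $1\le k\le\nu-1$}: summing over $k=1,\dots,n-1$ gives $\N(G)\ge n(n-1)+1=n^2-n+1$, and the inequality is strict once $G\ne C_n$, since then $G$ is not $2$-regular and hence has $|E(G)|\ge n+1$ connected induced subgraphs of order $2$. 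Proving this auxiliary claim — equivalently, producing for each fixed order an injection from $V(G)$ into the connected induced subgraphs of that order, say from an $st$-numbering or an ear decomposition of $G$ — is the step that needs genuine care.
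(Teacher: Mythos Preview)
Your proposal has a genuine gap, and you essentially locate it yourself: the ear-decomposition induction does not close with the crude bounds $\N(G')_a,\N(G')_b\ge n-m$, and the auxiliary claim you fall back on --- that every $2$-connected graph on $\nu$ vertices has at least $\nu$ connected induced subgraphs of each order $k\in\{1,\dots,\nu-1\}$ --- is asserted but not proved. That claim is plausible and would indeed finish the theorem, but the injection you gesture at is not there: a single $st$-numbering yields only \emph{one} connected prefix of each size, and varying the source $s$ over $V(G)$ need not produce distinct $k$-sets, since nothing lets you recover $s$ from the prefix as a set. The theta-graph base case ($k=1$) is likewise left as ``one checks''. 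So what you have is a sketch of two possible routes, neither carried through; the missing idea is precisely a mechanism that manufactures $n$ distinct connected $k$-sets for every $k$.

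The paper's proof is entirely different and sidesteps both induction and per-order counting. After reducing to minimally $2$-connected $G\ne C_n$ (and disposing of the ``special'' graphs via Lemma~\ref{SpeGraph}), it splits the connected induced subgraphs into two disjoint pools. For each non-adjacent pair $\{u,v\}$, Menger's theorem gives two internally disjoint $u$--$v$ paths of minimum total length, which are therefore induced; this already furnishes at least $2\bigl(\binom{n}{2}-m\bigr)=n^2-n-2m$ induced paths of order $\ge 3$. Then, invoking Dirac's theorem that in a minimally $2$-connected graph no cycle has a chord, the paper shows that $G-x-y$ is connected for every edge $xy$, and that neither $G-x$ nor $G-x-y$ is a path; this yields $n+m+m+n+1=2n+2m+1$ further connected induced subgraphs (the vertices, the edges, the graphs $G-x-y$, the graphs $G-x$, and $G$ itself), none of which is a path of order $\ge 3$. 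Summing gives $\N(G)\ge n^2+n+1>n^2-n+1=\N(C_n)$. The key input you are missing is this direct two-family count, with Dirac's structural result doing the work that your induction cannot.
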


\begin{proof}
Throughout the proof, it is assumed that $n\geq 3$. Let $G\in \mathcal{H}(n,0)$ be a graph of order $n$ that minimises the number of connected induced subgraphs. Then $G$ must necessary be minimally $2$-connected. In other words, $G$ must have the property that removing an edge in $G$ destroys $2$-connectivity. Moreover, in view of Lemma~\ref{SpeGraph}, $G$ cannot be a special graph. Suppose that $G$ is not a cycle. Clearly, we have $n\geq 5$. Let us prove that we can always identify $n^2+n+1>n^2-n+1=\N(C_n)$ connected induced subgraphs in $G$.

Let $u,v$ be two non-adjacent vertices of $G$. By the vertex version of Menger's theorem~\cite{menger1927allgemeinen}, there must exist two internally vertex disjoint paths between $u$ and $v$. Among all such $u-v$ paths, we choose two of them that are of smallest lengths. The vertex sets of these chosen paths must necessarily induce paths in $G$ since otherwise, the property of these paths being shortest is violated. Let $m$ denote the number of edges of $G$. Then the number of unordered pairs of non-adjacent vertices of $G$ is $\binom{n}{2}-m$, and therefore $G$ has at least $$2\Big(\binom{n}{2}-m \Big)=n^2-n-2m$$ connected induced subgraphs, each of them is a path of order three or more.

Let $x,y$ be two adjacent vertices of $G$. We claim that the graph $G-x-y$ is connected and moreover it is not a path. For the claim, suppose that $G-x-y$ is not connected and let $G_1,G_2$ be two (connected) components of $G-x-y$. Both $x$ and $y$ must have a neighbour in $G_1$ and $G_2$ because neither $x$ nor $y$ is a cut vertex of $G$. This implies that $G$ contains a cycle that passes through $x,y$ and never uses the edge $xy$. This cycle can be obtained as follows: let $x_1$ (resp. $x_2$) be a neighbour of $x$ in $G_1$ (resp. $G_2$), and $y_1$ (resp. $y_2$) be a neighbour of $y$ in $G_1$ (resp. $G_2$). Then this cycle is made of $xx_1$, a shortest $x_1-y_1$ path in $G_1$, $y_1y, yy_2$, a shortest $y_2-x_2$ path in $G_2$, and $x_2x$, in this order. However, by a result of Dirac~\cite[Theorem~3]{dirac1967minimally}, this cannot happen in a minimally $2$-connected graph. Hence, $G-x-y$ is connected. It remains to show that $G-x-y$ is not a path. Suppose to the contrary that $G-x-y$ is a path and let $u_1,u_2$ be the endvertices of $G-x-y$. Since $G$ is cut vertex-free, both $u_1$ and $u_2$ must have $x$ or $y$ as a neighbour. In a minimally $2$-connected graph, this gives rise to essentially two possibilities (up to exchanging the role of $x$ and $y$, or $u_1$ and $u_2$) for $G$: either $G$ itself is $C_n$, or $G$ consists of a cycle $C_{n-1}$ together with a vertex $z\notin V(C_{n-1})$ which is adjacent to precisely two vertices of $C_{n-1}$. The former situation is avoided by assumption while the latter defines $G$ as a special graph. Hence, $G-x-y$ is not a path. 

Let $x\in V(G)$. We further claim that the connected graph $G-x$ is not a path. To see this, note that if $G-x$ was a path, then its two endvertices would both be adjacent to $x$ since $G$ is $2$-connected. In particular $G$ would be a cycle since $G$ is minimally $2$-connected. Hence, $G-x$ is not a path.

Now we note that the following are all distinct connected induced subgraphs of $G$ and none of them is a path of order at least three:
\begin{itemize}
\item all single vertices of $G$;
\item all $2$-vertex connected subgraphs of $G$;
\item all subgraphs obtained by removing two adjacent vertices of $G$; 
\item all subgraphs obtained by removing one vertex of $G$;
\item the whole graph $G$.
\end{itemize}
By combining all the above cases, we obtain $n + m + m + n +1=2n+2m+1$ additional connected induced subgraphs of $G$ that are not paths of order three or more. Together with the induced paths enumerated earlier, we conclude that
\begin{align*}
\N(G)\geq n^2-n -2m+2n+2m+1=n^2+n+1>n^2-n+1=\N(C_n)\,.
\end{align*}
This completes the proof of the theorem.
\end{proof}

We observe that all blocks of a graph that minimises the number of connected induced subgraphs in the set $\mathcal{H}(n,c)$ must be minimally $2$-connected. However, there are usually many minimally $2$-connected graphs having the same order $n$. For $n\geq 3$, the sequence starts
\begin{align*}
1, 1, 2, 3, 6, 12, 28, 68, 184, 526, 1602, 5075, 16711, 56428, 195003, 685649, \ldots\,,
\end{align*}
see \url{A003317} in~\cite{sloaneoeis}. It is then natural to formulate this intriguing problem for further investigation:

\begin{problem}
Find a constructive characterisation of those graphs with order $n$ and $c>0$ cut vertices that have the smallest number of connected induced subgraphs.
\end{problem}

\section{Connected graphs with $p$ pendant vertices}\label{connected p pendant}
We define $\mathcal{G}(n,p)$ to be the set of all connected graphs with $n$ vertices of which $p$ are pendant. In~\cite{EricAudace} Andriantiana and the author of the present paper investigated inequalities which relate the number of connected induced subgraphs of a graph to that of its complement. They also arrived at the following result which settles the extremal graph structure for the maximum number of connected induced subgraphs among all graphs in $\mathcal{G}(n,p)$.

\begin{theorem}[\cite{EricAudace}]\label{maxn.p}
Let $G \in \mathcal{G}(n,p)$ with $n\geq 5$ and $0\leq p \leq n-2$.
\begin{itemize}
\item If $p< n-2$, then we have 
\begin{equation*} \label{Eq:Mk2}
\N(G) \leq 2^{n-1} + 2^{n-p-1} + p -1\,.
\end{equation*}
Equality happens if and only if $G$ can be obtained by identifying one vertex of $K_{n-p}$ with the central vertex of $S_{p+1}$. 
\item If $p=n-2$, then we have
\begin{equation*}
\N(G) \leq n+3\cdot 2^{n-3}\,.
\end{equation*}
Equality happens if and only if $G$ can be obtained by inserting one vertex into an edge of $S_{n-1}$.
\end{itemize}
\end{theorem}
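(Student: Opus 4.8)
The plan is to pin down the extremiser in two moves: first make the subgraph spanned by the non-pendant vertices as dense as possible, and then optimise how the $p$ leaves are shared out among the clique so obtained. Fix $G\in\mathcal{G}(n,p)$ attaining the maximum of $\N$ and put $m:=n-p$, so $m\ge 2$ under the hypothesis $p\le n-2$. Two elementary observations drive the first move. First, every leaf of $G$ is adjacent to a non-pendant vertex: were the unique neighbour of a leaf itself a leaf, the two would form a $P_2$-component, impossible since $G$ is connected with $n\ge 3$. Second, inserting an edge between two non-adjacent vertices $x,y$ of a connected graph strictly increases $\N$, because every vertex set inducing a connected subgraph before still does afterwards, while $\{x,y\}$ induces a disconnected subgraph before and a connected one after. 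Adding such an edge between two non-pendant vertices of $G$ lowers no vertex to degree $1$ and leaves every leaf of degree $1$, so the result still lies in $\mathcal{G}(n,p)$; by extremality no such pair exists, whence the $m$ non-pendant vertices of $G$ span a complete graph $K_m$ on, say, $\{w_1,\dots,w_m\}$, and each leaf is attached to some $w_i$.

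Let $a_i\ge 0$ be the number of leaves hanging at $w_i$, so $a_1+\dots+a_m=p$. A connected induced subgraph of $G$ is determined by the subset $S\subseteq\{w_1,\dots,w_m\}$ of non-pendant vertices it uses together with a choice of leaves: when $S\ne\emptyset$, $S$ induces a connected subgraph (it lies inside $K_m$) and any subset of the leaves attached along $S$ may be adjoined, contributing $\prod_{w_i\in S}2^{a_i}$; when $S=\emptyset$ the only options are the $p$ single-leaf subgraphs, since distinct leaves are non-adjacent. Hence
\begin{align*}
\N(G)=\Big(\prod_{i=1}^{m}\bigl(1+2^{a_i}\bigr)\Big)+p-1 .
\end{align*}
It remains to maximise $\prod_{i=1}^{m}(1+2^{a_i})$ under $\sum_i a_i=p$, $a_i\ge 0$, subject to the constraint that no $w_i$ may slip to degree $1$: if $m\ge 3$ each $w_i$ already has degree $m-1\ge 2$ in $K_m$ and no constraint arises, whereas if $m=2$ each $w_i$ has degree $1$ inside $K_2$, forcing $a_i\ge 1$.

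Suppose first $p<n-2$, so $m\ge 3$ and the $a_i$ are unconstrained. The identity $2(1+2^{a+b})-(1+2^a)(1+2^b)=(2^a-1)(2^b-1)$ is nonnegative and vanishes only when $a=0$ or $b=0$; so whenever two of the $a_i$ are positive, transferring all leaves of one onto the other strictly enlarges the product. Iterating, the unique maximiser is $(p,0,\dots,0)$, giving $\prod_i(1+2^{a_i})=(1+2^{p})\,2^{m-1}$ and $\N(G)=2^{n-1}+2^{n-p-1}+p-1$, with extremal graph $K_{n-p}$ carrying all $p$ leaves at one vertex --- equivalently $K_{n-p}$ with one vertex identified with the centre of $S_{p+1}$. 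Suppose instead $p=n-2$, so $m=2$ and $a_1,a_2\ge 1$. Running the same computation in the opposite direction --- passing from $(a,b)$ with $a\ge b\ge 2$ to $(a+1,b-1)$ changes the product by $2^{a}-2^{b-1}>0$ --- drives the two values apart and forces the unique maximiser $\{a_1,a_2\}=\{p-1,1\}$, giving $\prod_i(1+2^{a_i})=3(1+2^{p-1})$ and $\N(G)=3\cdot 2^{n-3}+n$, realised by $K_2$ with $n-3$ leaves at one end and a single leaf at the other, i.e.\ the graph obtained by subdividing one edge of $S_{n-1}$.

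I expect the only delicate point --- and the reason the extremal structure bifurcates at $p=n-2$ --- to be exactly this last side constraint: in the $m\ge 3$ regime nothing prevents emptying a clique vertex of its leaves, but for $m=2$ that move drops the vertex to degree $1$ and creates an extra pendant vertex, ejecting the graph from $\mathcal{G}(n,p)$. Everything else is the bookkeeping above together with the routine verification that at every inequality used equality singles out precisely the stated graph, invoking vertex-transitivity of $K_m$ in the first case and the symmetry of $K_2$ in the second.
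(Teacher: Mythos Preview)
The paper does not prove this theorem: it is quoted from \cite{EricAudace} and stated without proof, so there is no argument in the paper to compare against. Your proof is correct and self-contained. The reduction to a clique on the non-pendant vertices via edge-addition is sound (the move preserves membership in $\mathcal{G}(n,p)$ exactly as you argue), the counting identity
\[
\N(G)=\prod_{i=1}^{m}(1+2^{a_i})-1+p
\]
is right, and the product optimisation via $2(1+2^{a+b})-(1+2^{a})(1+2^{b})=(2^{a}-1)(2^{b}-1)$ cleanly isolates the unique maximiser in both regimes. You have also correctly identified the reason for the bifurcation at $p=n-2$: for $m=2$ the side constraint $a_i\ge 1$ is forced to keep both clique vertices non-pendant, which blocks the full concentration $(p,0)$ and leaves $(p-1,1)$ as the optimum.
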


In order to obtain the minimisation counterpart of Theorem~\ref{maxn.p}, we need to state two intermediate results.

\subsection{The case $p\neq 0$}
Sharp bounds on the number of connected induced subgraphs in terms of order were obtained in~\cite{Audacegenral2018}. One of the results in~\cite{Audacegenral2018} will be needed for our purpose.

\begin{theorem}[\cite{Audacegenral2018}--Theorem~9]\label{min.Uni.n}
If $G$ is a unicylic graph of order $n$, then $$N(G) \geq (n^2 + 3n - 4)/2.$$ The bound is attained if and only if $G$ can be obtained by identifying a vertex of $K_3$ with a leaf of $P_{n-2}$.
\end{theorem}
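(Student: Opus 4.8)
The plan is to prove Theorem~\ref{min.Uni.n} by induction on $n$, exploiting the fact that a unicyclic graph $G$ of order $n$ with girth $g$ is built from the cycle $C_g$ with trees attached, so one can always locate a pendant vertex (or a pendant path) to remove and reduce to a smaller unicyclic graph. The base case $n=3$ is $C_3$ itself, for which $\N(C_3)=7=(9+9-4)/2$, so the bound holds with equality; this is consistent with the claimed extremal graph since $K_3$ with a leaf of $P_1$ attached is just $K_3$.

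For the inductive step I would distinguish two cases. If $G$ has a pendant vertex $u$ with neighbour $v$, then $G-u$ is a unicyclic graph of order $n-1$, so by the induction hypothesis $\N(G-u)\geq ((n-1)^2+3(n-1)-4)/2$. Since $\N(G)=\N(G-u)+\N(G)_u$ and every $u$-containing connected induced subgraph other than $\{u\}$ must contain $v$, one has $\N(G)_u = 1 + \N(G-u)_v$. The key sub-claim is then a linear lower bound $\N(G-u)_v \geq n-1$ for a vertex $v$ in an $(n-1)$-vertex unicyclic graph — analogous to the path and cycle lemmas quoted at the start — which would give $\N(G)_u\geq n$ and hence $\N(G)\geq ((n-1)^2+3(n-1)-4)/2 + n = (n^2+3n-4)/2$, with equality analysis forcing $G-u$ to be extremal (so $K_3$ with a path of length $n-3$) and $v$ to be the unique leaf of that path. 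If $G$ has no pendant vertex, then $G=C_n$ and $\N(C_n)=n^2-n+1$, which one checks exceeds $(n^2+3n-4)/2$ for all $n\geq 3$ (the difference is $(n^2-5n+6)/2=(n-2)(n-3)/2\geq 0$, with equality only at $n=3$), so the cycle is never the minimiser for $n\geq 4$ and exactly ties $C_3$ at $n=3$.

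The main obstacle I anticipate is the equality characterisation rather than the inequality. Establishing $\N(G-u)_v\geq n-1$ with a clean description of when equality holds requires care: $v$ should be a pendant vertex of $G-u$ lying at the far end of an attached path, and one must rule out that some non-path configuration also attains the bound. I would handle this by a secondary induction or a direct structural argument showing that in any unicyclic graph of order $m$, every vertex lies in at least $m$ connected induced subgraphs, with equality forcing the vertex to be a leaf whose removal strictly decreases neither connectivity nor the cycle — effectively that the vertex sits at the tip of a pendant path hanging off the unique cycle. Combining this with the strictness coming from any "extra" connected induced subgraph through $v$ (for instance if $v$ has degree $\geq 2$ in $G-u$, or if $G-u$ is not the claimed extremal graph) pins down $G$ uniquely as $K_3$ with a leaf identified to a leaf of $P_{n-2}$.

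One should also double-check the degenerate small cases $n=4,5$ by hand, since the induction needs the base and first step to be watertight: for $n=4$ the extremal graph is $K_3$ with one pendant edge, giving $\N=1\cdot4+\binom{3}{2}\cdot\text{(contributions)}$ — more simply, direct enumeration gives $\N = 12 = (16+12-4)/2$, confirming the formula. With these pieces in place the theorem follows.
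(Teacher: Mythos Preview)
This theorem is not proved in the present paper at all: it is quoted verbatim as Theorem~9 of~\cite{Audacegenral2018} and used as a black box in the proof of Theorem~\ref{MinPpendGrap}. So there is no ``paper's own proof'' to compare against; any correct argument you supply would be a genuine addition.

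Your inductive strategy is natural and essentially works, but there is a real gap: an off-by-one error in both the key sub-claim and the arithmetic, and the two errors do \emph{not} cancel. You claim $\N(G-u)_v\ge n-1$ for a vertex $v$ in an $(n-1)$-vertex unicyclic graph, hence $\N(G)_u\ge n$, and then assert
\[
\frac{(n-1)^2+3(n-1)-4}{2}+n=\frac{n^2+3n-4}{2}.
\]
But the left-hand side equals $(n^2+3n-6)/2$, which is $1$ short. The fix is that the correct local bound in a unicyclic graph is stronger than in a general connected graph: for any vertex $v$ in an $m$-vertex unicyclic graph $H$ one has $\N(H)_v\ge m+1$, not merely $m$. (Sketch: let $c$ be the closest cycle vertex to $v$ at distance $d$, and $g\ge 3$ the girth. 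For each size $k$ with $d+2\le k\le d+g-1$ there are at least two connected induced $k$-sets through $v$, obtained by extending the $v$--$c$ path around the cycle in either direction; together with one subgraph of each of the remaining sizes this gives $\N(H)_v\ge m+(g-2)\ge m+1$.) With this you get $\N(G)_u\ge n+1$ and hence
\[
\N(G)\ge \frac{(n-1)^2+3(n-1)-4}{2}+(n+1)=\frac{n^2+3n-4}{2},
\]
which is the desired bound. The equality analysis then goes through: equality in $\N(H)_v\ge m+1$ forces $g=3$ and $v$ at the tip of a pendant path whose other end meets the triangle, and equality in the inductive hypothesis forces $G-u$ to be the extremal graph of order $n-1$; together these pin down $G$. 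Your treatment of the pendant-free case $G=C_n$ is fine as written.
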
 

At this stage, recall that $\mathbb{T}^1(n,p)$ is the tree obtained from the vertex disjoint graphs $S_{1+\lfloor p/2 \rfloor}$ and $S_{1+\lceil p/2 \rceil}$ by identifying their central vertices with the two leaves of $P_{n-p}$, respectively.
We recall Li and Wang's result as stated in the introduction. 

\begin{theorem}[\cite{LiWang}--Theorem~1]\label{min.tree.leaf}
If $n\geq 4$ and $2\leq p \leq n-2$, then $\mathbb{T}^1(n,p)$ is the unique tree with order $n$ and $p$ leaves that attains the minimum number of subtrees.
\end{theorem}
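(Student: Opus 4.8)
The plan is to take a tree $T$ with $n$ vertices and $p$ leaves that minimises the number of subtrees and to show, through a short sequence of local transformations, that $T$ must be a caterpillar all of whose leaves are attached to the two endpoints of its spine --- equivalently, a \emph{double broom}: a path $v_0v_1\cdots v_\ell$ carrying $a\ge 1$ pendant leaves at $v_0$ and $b\ge 1$ pendant leaves at $v_\ell$. (The extreme cases are degenerate and can be dispatched immediately: $p=2$ forces $T=P_n$ with $\N(P_n)=\binom{n+1}{2}$, and $p=n-2$ forces $T$ to be a double star $D_{a,b}$, $a+b=n-2$, with $\N(D_{a,b})=(n-2)+2^a+2^b+2^{n-2}$.) Once $T$ is known to be a double broom, its spine necessarily has $n-p$ vertices, so $\ell=n-p-1$, and the only remaining freedom is the split $a+b=p$; a direct count gives
\begin{align*}
\N\bigl(v_0\cdots v_\ell;\,a,b\bigr)=p+\binom{\ell}{2}+\ell\,(2^a+2^b)+2^p .
\end{align*}
With $\ell$ fixed this is minimised, uniquely up to swapping the two ends (which produces an isomorphic tree), when $a=\lfloor p/2\rfloor$ and $b=\lceil p/2\rceil$, by the strict convexity of $x\mapsto 2^x$. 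This is exactly $\mathbb{T}^1(n,p)$, and it also yields the uniqueness statement and a closed formula for $\N(\mathbb{T}^1(n,p))$.

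The heart of the matter is therefore the reduction of an arbitrary minimiser to a double broom. The basic tool is the gluing identity: if a tree $T$ is obtained by identifying a vertex $v$ of a tree $T_0$ with a vertex of a tree $R$, then
\begin{align*}
\N(T)=\N(T_0)+\N(R)-1+\bigl(\N(T_0)_v-1\bigr)\bigl(\N(R)_v-1\bigr),
\end{align*}
which is strictly increasing in both $\N(R)$ and $\N(R)_v$ once $T_0$ is nontrivial. From this one derives the three transformations that steer $T$ towards a double broom while preserving $n$ and $p$ and not increasing $\N$: (i) \emph{branch-straightening} --- replace a pendant branch bearing a single leaf by a pendant path of the same order; (ii) \emph{leaf-sliding} --- move a pendant leaf hanging at an interior vertex of a path to an end of that path; and (iii) \emph{coalescing} --- merge two branch vertices along the path that joins them (equivalently, absorb the short legs of a spider) until at most the two endpoints of one central path carry pendant leaves. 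Applying these in a carefully chosen order --- always acting at a branch vertex closest to the ``boundary'' of $T$, so that a later move does not destroy the effect of an earlier one --- brings $T$ to a double broom; tracking when each inequality is strict gives the uniqueness.

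I expect the main obstacle to lie in step (ii), the leaf-sliding inequality, together with the bookkeeping that makes the three transformations mutually compatible: this is essentially the point where Li and Wang's original argument does its work, and the sign of the relevant difference must be followed through a somewhat delicate computation with the gluing identity. A more inductive alternative is to induct on $n$ and use the leaf-deletion recursion $\N(T)=\N(T-u)+1+\N(T-u)_w$, where $u$ is a leaf chosen at the end of a longest path of $T$ and $w$ is its unique neighbour: if $\deg_T(w)=2$ then $T-u$ has $n-1$ vertices and still $p$ leaves, so the inductive hypothesis applies at once, while if $\deg_T(w)\ge 3$ one instead argues on the number of leaves of $T-u$. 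In either setting the crux is the same --- controlling the correction term $\N(T-u)_w$ precisely enough to pin down the extremal tree.
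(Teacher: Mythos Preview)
This theorem is not proved in the present paper: it is quoted as Theorem~1 of Li and Wang~\cite{LiWang} and then used as a black box in the proof of Theorem~\ref{MinPpendGrap}. There is therefore no ``paper's own proof'' to compare your proposal against.

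That said, your outline is a faithful sketch of the standard argument (and, in broad strokes, of Li and Wang's original proof): reduce an arbitrary minimiser to a double broom via local moves that preserve $(n,p)$ and do not increase $\N$, then optimise the split $a+b=p$ by the strict convexity of $x\mapsto 2^x$. Your closed formula for the double broom agrees with the paper's expression for $\N(\mathbb{T}^1(n,p))$ once one sets $\ell=n-p-1$. The caveat you yourself flag is the honest one: what you have written is a proof \emph{plan}, not a proof. The gluing identity is correct and the three transformations are the right ones, but you have not actually verified the key inequality for leaf-sliding (that moving a pendant leaf from an interior spine vertex to an end weakly decreases $\N$, strictly unless nothing moves), nor have you checked that the order in which you apply (i)--(iii) really terminates at a double broom without the number of leaves changing along the way. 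These are exactly the computations Li and Wang carry out, and they are not entirely routine; until they are written down, the argument is incomplete. The inductive alternative you sketch via deletion of a leaf at the end of a longest path is also viable, but it runs into the same difficulty in the case $\deg_T(w)\ge 3$, where controlling $\N(T-u)_w$ amounts to the same local comparison.
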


Our next theorem, which is essentially extracted from Theorem~\ref{min.tree.leaf}, reads as follows:

\begin{theorem}\label{MinPpendGrap}
Let $G \in \mathcal{G}(n,p)$ with $n\geq 4$ and $1\leq p \leq n-2$.
\begin{itemize}
\item If $p=1$, then we have
\begin{equation*}
\N(G) \geq (n^2 + 3n - 4)/2\,.
\end{equation*}
Equality happens if and only if $G$ can be obtained by identifying a vertex of $K_3$ with a leaf of $P_{n-2}$.
\item If $p\neq 1$, then we have 
\begin{equation*} 
\N(G) \geq 2^p +(n-p-1)(2^{\lfloor p/2 \rfloor}+2^{\lceil p/2 \rceil}) + p+(n-p-1)(n-p-2)/2\,.
\end{equation*}
Equality happens if and only if $G$ is isomorphic to $\mathbb{T}^1(n,p)$.
\end{itemize}
\end{theorem}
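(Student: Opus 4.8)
The plan is to reduce the problem to the tree case (Theorem~\ref{min.tree.leaf}) by a sequence of block-destroying transformations, handling the $p=1$ case separately since then $\mathbb{T}^1(n,1)$ degenerates. First I would dispose of $p=1$: a connected graph with exactly one pendant vertex and $n\geq 4$ that is not a tree must contain a cycle, so it has at least as many connected induced subgraphs as the minimal unicyclic graph; but one checks directly that if $G$ has a single leaf, then either $G$ is unicyclic with the leaf hanging off a cycle (so $\N(G)\geq (n^2+3n-4)/2$ by Theorem~\ref{min.Uni.n}, with equality exactly for $K_3$ with a path attached), or $G$ has more than one independent cycle and a separate argument bounds $\N(G)$ strictly above $(n^2+3n-4)/2$. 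The cleanest route for the latter is: if $G$ is not unicyclic, pass to a spanning connected subgraph with one fewer edge that is still in $\mathcal{G}(n,1)$ or a superclass; removing an edge never increases $\N$ only when it disconnects, but here we can always delete a cycle edge keeping the graph connected and keeping the single leaf, so $\N$ strictly decreases until we hit the unicyclic case, and unicyclic with $\N=(n^2+3n-4)/2$ forces the stated structure by Theorem~\ref{min.Uni.n}. The slight subtlety — that edge deletion is not monotone for $\N$ in general — is handled because deleting an edge from a block of order $\geq 3$ strictly removes at least one connected induced subgraph (the two endpoints together with suitable vertices) while creating none, provided the graph stays connected.

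For $p\neq 1$ the heart of the argument is to show that a minimiser $\mathbb{G}\in\mathcal{G}(n,p)$ is a tree, after which Theorem~\ref{min.tree.leaf} (together with $p(T)+c(T)=n(T)$ for trees, so the $p$-leaf minimal tree is exactly $\mathbb{T}^1(n,p)$) finishes everything, and the displayed formula is obtained by a direct count of connected induced subgraphs of $\mathbb{T}^1(n,p)$: split according to which of the two central vertices of the stars are used and how much of $P_{n-p}$ is used, yielding the three summands $2^p$, $(n-p-1)(2^{\lfloor p/2\rfloor}+2^{\lceil p/2\rceil})$, and $p+\binom{n-p-1}{2}$. To prove $\mathbb{G}$ is a tree, suppose it has a cycle, hence a block $B$ with $|V(B)|\geq 3$. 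Since deleting pendant-preserving edges can only be blocked by connectivity, I would argue that $\mathbb{G}$ restricted to each block is minimally $2$-connected (else delete an edge inside the block, strictly decreasing $\N$ while keeping order and leaf count), and then that each such block is in fact a triangle or a cycle — an $\ell$-vertex $2$-connected graph has strictly more connected induced subgraphs than the configuration obtained after `opening up' the cycle appropriately. The decisive step is a local surgery at a cyclic block: pick a block $B$ of order $\geq 3$ containing a cut vertex $w$ that separates $B$ from the rest of a fixed side; delete one edge of $B$ incident to a non-cut vertex of $B$, which keeps the graph connected (as $B$ is a cycle after the previous reductions), does not change $n$, and changes the leaf count by at most one — and when it does create a new leaf, compensate by a path-contraction elsewhere of the type in Lemma~\ref{PathOrder} run in reverse, or argue that the resulting graph has fewer connected induced subgraphs and strictly fewer cyclic blocks, so iterating terminates at a tree with the same or fewer pendant vertices, contradicting minimality once the leaf bookkeeping is reconciled.

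The main obstacle I anticipate is exactly this leaf-count bookkeeping during cycle destruction: breaking a cycle edge at a degree-$2$ vertex of a cycle block turns that vertex into a leaf, so the transformation leaves $\mathcal{G}(n,p)$ for $\mathcal{G}(n,p+1)$, and one cannot simply invoke minimality within the wrong class. The way I would resolve this is to combine the cycle-breaking with a simultaneous `path-balancing' or `path-absorption' move so that the net effect preserves $p$: for instance, if the minimiser $\mathbb{G}$ had a cycle, then it has at least one cyclic block adjacent (through cut vertices) to the rest of the graph in a pendant-path fashion, and the transformation in Lemma~\ref{PathOrder} (which strictly decreases $\N$ unless paths are already balanced) can be used to show any cyclic minimiser is beaten by a tree of the same order and same $p$. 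Concretely, I would establish a lemma: if $G\in\mathcal{G}(n,p)$ contains a cyclic block, there is $G'\in\mathcal{G}(n,p)$ with $\N(G')<\N(G)$ and strictly fewer edges; since $\N$ and the edge count are bounded below, iteration reaches a graph in $\mathcal{G}(n,p)$ with no cyclic block, i.e.\ a tree, and among trees $\mathbb{T}^1(n,p)$ is the unique minimiser by Theorem~\ref{min.tree.leaf}. Verifying that such a strictly-$\N$-decreasing, edge-decreasing, $p$-preserving move always exists — especially near small cycles like triangles, and when the cyclic block carries several cut vertices — is where the real work lies, and I would model those cases on the block-surgery lemmas (Lemmas~\ref{Block.two.cutPend}, \ref{Block.Two.cutCenter}, \ref{Block.Two.cutCenterM=1}) already proved for the maximisation problem, re-run with the roles of `complete block' replaced by `cyclic block' and the inequalities reversed.
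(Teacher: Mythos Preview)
Your approach has a genuine gap rooted in a misconception: you assert that ``edge deletion is not monotone for $\N$ in general,'' but for connected \emph{induced} subgraphs this is false. If $G'=G-e$, then for every vertex set $S$ the graph $G'[S]$ is a spanning subgraph of $G[S]$, so whenever $G'[S]$ is connected so is $G[S]$; hence $\N(G')\le\N(G)$ unconditionally, whether or not $G'$ is connected. You are presumably thinking of connected (not necessarily induced) subgraphs, where monotonicity does fail.

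Once this is granted, the entire leaf-bookkeeping apparatus you propose for $p\ne 1$ becomes unnecessary, and the paper's argument is essentially a two-line reduction. Pick any spanning tree $T$ of $G$; then $\N(G)\ge\N(T)$ by the monotonicity above, and $T$ has $p(T)\ge p$ leaves since every pendant vertex of $G$ remains pendant in $T$. Li--Wang (Theorem~\ref{min.tree.leaf}) gives $\N(T)\ge\N(\mathbb{T}^1(n,p(T)))$, and the only additional lemma---which the paper verifies by a short direct computation---is that $\N(\mathbb{T}^1(n,p))$ is strictly increasing in $p$ for $2\le p\le n-2$. Chaining these yields $\N(G)\ge\N(\mathbb{T}^1(n,p))$, with equality forcing $G=T=\mathbb{T}^1(n,p)$. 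Your programme of constructing a $p$-preserving, $\N$-decreasing, edge-decreasing surgery is therefore not needed; and as you yourself flag, such a move does not obviously exist when a cyclic block meets the rest of the graph in a single cut vertex (breaking any edge of that cycle raises $p$ by at least one), so your sketch is genuinely incomplete as it stands. For $p=1$ your reduction to the unicyclic case is exactly what the paper does, and it works cleanly once the monotonicity of $\N$ under edge deletion is accepted.
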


\begin{proof}
Suppose that $p=1$. Then $G$ is not a tree. One can then remove edges (possibly none) from $G$ to get a new connected graph $G'$ of order $n$ that contains exactly one cycle. It follows from Theorem~\ref{min.Uni.n} that $\N(G)\geq \N(G')\geq (n^2 + 3n - 4)/2 $. Moreover, the unique graph attaining this bound also has exactly one pendant vertex as it can be obtained by identifying a vertex of $K_3$ with a leaf of $P_{n-2}$. Thus the result follows in this case.

Now suppose that $p\neq 1$. Let us first derive a formula for $\N(\mathbb{T}^1(n,p))$. Denote by $u$ and $v$ the two vertices of $\mathbb{T}^1(n,p)$ such that $\deg_{\mathbb{T}^1(n,p)} (u)=1+\lfloor p/2 \rfloor$ and $\deg_{\mathbb{T}^1(n,p)} (v)=1+\lceil p/2 \rceil$. We have
\begin{align*}
\N(\mathbb{T}^1(n,p))&=\N(\mathbb{T}^1(n,p))_{u,v}+\N(\mathbb{T}^1(n,p)-v)_u+\N(\mathbb{T}^1(n,p)-u)_v+\N(\mathbb{T}^1(n,p)-u-v)\\
&=2^{\lfloor p/2 \rfloor + \lceil p/2 \rceil} +(n-p-1)2^{\lfloor p/2 \rfloor}+(n-p-1)2^{\lceil p/2 \rceil}\\
&+ \big(\lfloor p/2 \rfloor +(n-p-1)(n-p-2)/2+ \lceil p/2 \rceil\big)\,.
\end{align*}
We claim that $\N(\mathbb{T}^1(n,p))$ is an increasing function in $p$. Indeed, we have
\begin{align*}
\N(\mathbb{T}^1(n,p))&=2^p +(n-p-1)(2^{p/2}+2^{p/2})+p+\binom{n-p-1}{2}\,,\\
\N(\mathbb{T}^1(n,p+1))&=2^{p+1} +(n-p-2)(2^{p/2}+2^{p/2+1})+p+1+\binom{n-p-2}{2}
\end{align*}
if $p$ is even, and
\begin{align*}
\N(\mathbb{T}^1(n,p))&=2^p +(n-p-1)(2^{(p-1)/2}+2^{(p+1)/2})+p+\binom{n-p-1}{2}\,,\\
\N(\mathbb{T}^1(n,p+1))&=2^{p+1} +(n-p-2)(2^{(p+1)/2}+2^{(p+1)/2})+p+1+\binom{n-p-2}{2}
\end{align*}
if $p$ is odd. In particular, we get
\begin{align*}
\N(\mathbb{T}^1(n,p+1))-\N(\mathbb{T}^1(n,p))=2^p-1+(n-p-4)(2^{p/2}-1)\geq 1+2^p-2^{p/2+1}>0
\end{align*}
if $p$ is even, and
\begin{align*}
\N(\mathbb{T}^1(n,p+1))-\N(\mathbb{T}^1(n,p))&=2^p-2+(n-p-5)(2^{(p-1)/2}-1)\\
&\geq 1+2^p-2^{(p-1)/2}-2^{(p+1)/2}>0
\end{align*}
if $p$ is odd. Let $T$ be a spanning tree of $G$ and note that $T$ has at least $p$ leaves. Since $\N(\mathbb{T}^1(n,p))$ is an increasing function in $p$, we deduce from Theorem~\ref{min.tree.leaf} that
\begin{align*}
\N(G)\geq \N(T) \geq \N(\mathbb{T}^1(n,p(T))) > \N(\mathbb{T}^1(n,p)) 
\end{align*}
if $p(T)\neq p$. If $p(T)= p$, then we have
\begin{align*}
\N(G)\geq \N(T) \geq  \N(\mathbb{T}^1(n,p))\,, 
\end{align*}
and the inequality becomes an equality if and only if $G$ is isomorphic to the tree $\mathbb{T}^1(n,p)$. This completes the proof of the theorem.
\end{proof}

\subsection{The case $p=0$}
Let $l,n,r$ be three positive integers such that $l,r\geq 3$ and $n\geq l+r$. We define the \emph{double tadpole} graph $D_n(l;r)$ as the graph constructed from the three pairwise vertex disjoint graphs $C_l,C_r,P_{n+2-l-r}$ by taking $u\in V(C_l),~v\in V(C_r)$ and identifying $u$ with one leaf of $P_{n+2-l-r}$ and $v$ with the other leaf of $P_{n+2-l-r}$.

For $n>5$, we shall prove that the double tadpole graph $D_n(3;3)$ has the smallest number of connected induced subgraphs among all graphs in the set $\mathcal{G}(n,0)$, and that $D_n(3;3)$ is unique with this property.

\medskip
We first give some important preliminaries, then formally state and prove our result. From here onwards, we shall simply write $D_n$ instead of $D_n(3;3)$.
\begin{proposition}\label{PropTadpole}
	For the double tadpole graph $D_n(l;r)$, we have
	\begin{align*}
	\N(D_n(3;3))= \N(D_n)=\frac{(n-1)(n+6)}{2}\,.
	\end{align*}
	Furthermore, if $(l,r)\neq (3,3)$, then we have
	\begin{align*}
	\N(D_n(l;r)) > \N(D_n)\,.
	\end{align*}
\end{proposition}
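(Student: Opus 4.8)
The plan is to compute $\N(D_n(l;r))$ exactly as a function of $n$, $l$, $r$ by decomposing connected induced subgraphs according to how they meet the two cycles $C_l$ and $C_r$, then to minimise this expression over all admissible $(l,r)$ with $l,r\geq 3$ and $l+r\leq n$. The key structural observation is that $D_n(l;r)$ consists of two cyclic blocks joined by a path, with two cut vertices $u\in V(C_l)$ and $v\in V(C_r)$; every connected induced subgraph therefore either avoids one of the cut vertices (landing entirely inside one cyclic piece or inside the connecting path) or contains at least one of them. First I would set up the partition $\N(D_n(l;r))=\N(D_n(l;r))_{u,v}+\N(D_n(l;r)-v)_u+\N(D_n(l;r)-u)_v+\N(D_n(l;r)-u-v)$, exactly as in the proof of Theorem~\ref{MinPpendGrap}.

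For the four terms I would use the three basic lemmas. Writing $k:=n+2-l-r$ for the order of the connecting path $P_k$: the subgraph $D_n(l;r)-u-v$ is the disjoint union of $P_{l-1}$, $P_{r-1}$ and $P_{k-2}$, so its count is $\binom{l}{2}-1+\binom{r}{2}-1+\binom{k-1}{2}$ (interpreting empty paths as contributing $0$). The term $\N(D_n(l;r)-v)_u$ counts connected induced subgraphs containing $u$ but not $v$: these live in $C_l$ together with an initial segment of the path, so this equals $\N(C_l)_u\cdot(k-1)=\bigl(1+\binom{l}{2}\bigr)(k-1)$, and symmetrically $\N(D_n(l;r)-u)_v=\bigl(1+\binom{r}{2}\bigr)(k-1)$. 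Finally $\N(D_n(l;r))_{u,v}$ counts subgraphs containing both cut vertices; such a subgraph must contain the whole connecting path, and its restriction to each cycle is an arbitrary $u$-containing (resp. $v$-containing) connected induced subgraph of $C_l$ (resp. $C_r$), giving $\N(C_l)_u\cdot\N(C_r)_v=\bigl(1+\binom{l}{2}\bigr)\bigl(1+\binom{r}{2}\bigr)$. Summing and using $k=n+2-l-r$ yields a closed formula; specialising to $l=r=3$ (so $k=n-4$) should, after routine simplification, give $\frac{(n-1)(n+6)}{2}$, which I would verify directly.

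The remaining task is the monotonicity claim: $\N(D_n(l;r))>\N(D_n(3;3))$ whenever $(l,r)\neq(3,3)$. Having the explicit formula, I would argue that increasing either $l$ or $r$ by one (while correspondingly shortening the path, keeping $n$ fixed) strictly increases $\N$. Concretely, compute $\N(D_n(l+1;r))-\N(D_n(l;r))$ from the formula; the dominant contribution comes from the products $\bigl(1+\binom{l}{2}\bigr)$ growing quadratically in $l$ while the path-length factor $(k-1)$ and the path term $\binom{k-1}{2}$ only drop by lower-order amounts, so the difference is positive for all $l\geq 3$ with $l+r\leq n-1$ (one must check the boundary case where the path becomes trivial, i.e. $l+r=n$, separately, but there the formula still applies with $k=2$). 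Iterating this, any $(l,r)$ with $l+r>6$ has strictly larger $\N$ than $(3,3)$, which exhausts all cases since $l,r\geq 3$ forces $l+r\geq 6$ with equality only at $(3,3)$.

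The main obstacle I anticipate is purely computational rather than conceptual: keeping the degenerate cases straight (empty or single-vertex paths when $l+r$ is close to $n$, so that $P_{k-2}$ or even $P_{k-1}$ has nonpositive index) and making sure the binomial-coefficient bookkeeping is consistent across those boundaries. It would be cleanest to fix the convention $\binom{j}{2}=j(j-1)/2$ for all integers $j\geq 0$ (which gives $0$ for $j\in\{0,1\}$, matching $\N(P_0)=0$, $\N(P_1)=1$ only after an explicit $+1$ correction — so I would instead track $\N(P_j)=\binom{j+1}{2}$ and handle $j=0$ by hand). Once the formula is pinned down, both the evaluation at $(3,3)$ and the monotonicity estimate are short algebra.
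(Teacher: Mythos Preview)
Your decomposition with respect to the two degree-$3$ vertices $u,v$ is exactly the paper's, and the evaluation at $(l,r)=(3,3)$ will go through once you correct the slip in the $\N(D_n(l;r)-u-v)$ term: the contribution is $\binom{l}{2}+\binom{r}{2}+\binom{k-1}{2}$, not $\binom{l}{2}-1+\binom{r}{2}-1+\binom{k-1}{2}$, since $\N(P_{l-1})=\binom{l}{2}$.

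The monotonicity step, however, has a real gap. Your claim that increasing either $l$ or $r$ by one (with $n$ fixed) always strictly increases $\N$ is false. Carrying out the computation you propose gives
\[
\N(D_n(l+1;r))-\N(D_n(l;r))=(l-1)\Bigl(1+\tbinom{r}{2}+(n+1-l-r)-\tfrac{l}{2}\Bigr),
\]
and the bracket becomes negative once $l$ is large relative to $r$: for instance with $n=100$, $r=3$ one finds $\N(D_{100}(96;3))=31938>27948=\N(D_{100}(97;3))$. The heuristic that the quadratic growth of $1+\binom{l}{2}$ dominates is misleading here, because the path term $\binom{k-1}{2}$ is also quadratic and shrinks at a comparable rate; the map $l\mapsto\N(D_n(l;r))$ is unimodal, not monotone.

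The paper avoids this by first imposing the symmetry assumption $r\geq l$, under which $\N(C_r)_v\geq\N(C_l)_u$ forces the bracket above to be positive; this reduces to $l=3$. But one cannot then simply re-apply the same monotonicity to $r$ (the required inequality would now go the wrong way), so the paper handles the remaining comparison by a separate direct computation:
\[
\N(D_n(3;r))-\N(D_n)=\tfrac12(r-3)\bigl(-r^2+(n+2)r-2\bigr)>0\quad\text{for }4\leq r\leq n-3.
\]
Your plan is salvageable with this two-stage structure, but the unrestricted monotonicity you assert does not hold.
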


\begin{proof}
	Let $u,v \in V(D_n(l;r))$ be the two vertices of $D_n(l;r)$ whose degree is $3$. We use our standard decomposition with respect to $u,v$:
	\begin{align*}
	\N(D_n(l;r))&=\N(D_n(l;r))_{u,v}+\N(D_n(l;r)-v)_u+ \N(D_n(l;r)-u)_v+ \N(D_n(l;r)-u-v)\\
	&=\N(C_l)_u \cdot \N(C_r)_v +(n+1-l-r) (\N(C_l)_u + \N(C_r)_v)\\
	&+ \N(P_{l-1}) + \N(P_{n-l-r})+ \N(P_{r-1})\\
	& =\big(1+ \binom{l}{2}\big)\N(C_r)_v + (n+1-l-r) \big(1+ \binom{l}{2} + \N(C_r)_v \big)\\
	& + \binom{l}{2} + \binom{n+1-l-r}{2} + \N(P_{r-1}) \,.
	\end{align*}
	Assume that $r\geq l\geq 4$. Taking the difference $\N(D_n(l;r))-\N(D_n(l-1;r))$, we get
	\begin{align*}
	\N(D_n(l;r))&- \N(D_n(l-1;r))=(l-1) \N(C_r)_v  + (l-1)(n+1-l-r)\\
	& - \big(1+ \binom{l-1}{2}+ \N(C_r)_v \big) + (l-1) - (n+1-l-r)\\
	&=(l-2) \N(C_r)_v - \N(C_l)_u + (l-2)(n+1-l-r) + 2(l-1)>0
	\end{align*}
	since $\N(C_r)_v \geq \N(C_l)_u $ and $n\geq l+r$.  It follows from this inequality that the minimum of $\N(D_n(l;r))$, given $r$, is attained when $l=3$. We have
	\begin{align*}
	\N(D_n(3;3))=\N(D_n) =22+ 8(n-5) + \binom{n-5}{2}=\frac{(n-1)(n+6)}{2}\,.
	\end{align*}
	Assume that $l=3$ and $r\geq 4$. Taking the difference $\N(D_n(3;r))-\N(D_n)$, we get
	\begin{align*}
	\N(D_n(3;r))&- \N(D_n)=4\cdot \N(C_r)_v + (n-r-2) \big(4 + \N(C_r)_v \big)\\
	& + 3 + \binom{n-r-2}{2} + \binom{r}{2} - \frac{(n-1)(n+6)}{2} \\
	&=\frac{(r-3)(-r^2+(n+2)r -2)}{2}>0
	\end{align*}
	after a simple manipulation (recall that $n\geq 3+r$). It follows from this inequality that the minimum of $\N(D_n(3;r))$ is attained when $r=3$.
\end{proof}

\begin{lemma}\label{ordCnandDn}
	If $G$ is a graph constructed from two vertex disjoint cycles $C_l$ and $C_{n-l+1}$ by identifying $u\in V(C_l)$ with $v \in V(C_{n-l+1})$, then we have
	\begin{align*}
	\N(G)>N(C_n)>\N(D_n)
	\end{align*}
	for all $n>5$.
\end{lemma}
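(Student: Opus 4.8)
The plan is to establish the two inequalities separately. The right-hand inequality $\N(C_n)>\N(D_n)$ is immediate: by Lemma~\cite{Audacegenral2018}, $\N(C_n)=n^2-n+1$, while Proposition~\ref{PropTadpole} gives $\N(D_n)=(n-1)(n+6)/2$, and the difference $\N(C_n)-\N(D_n)=n^2-n+1-(n^2+5n-6)/2=(n^2-7n+8)/2$ is positive exactly when $n^2-7n+8>0$, i.e. for $n\geq 6$; since we assume $n>5$, this holds. (For completeness I would record that at $n=5$ equality-ish behaviour breaks down, which is precisely why the hypothesis is $n>5$.)

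For the left-hand inequality I would use the standard decomposition of $\N(G)$ with respect to the unique cut vertex $w:=u=v$ of $G$, which has degree $4$. Write $\N(G)=\N(G)_w+\N(G-w)$. Since $G-w$ is the disjoint union of $P_{l-1}$ and $P_{n-l}$, we get $\N(G-w)=\binom{l}{2}+\binom{n-l+1}{2}$. For the subgraphs containing $w$: every connected induced subgraph containing $w$ is obtained by choosing a $w$-containing connected induced subgraph of $C_l$ and one of $C_{n-l+1}$ and gluing them at $w$, so $\N(G)_w=\N(C_l)_u\cdot\N(C_{n-l+1})_v=\bigl(1+\binom{l}{2}\bigr)\bigl(1+\binom{n-l+1}{2}\bigr)$, using Lemma~\cite{Audacegenral2018}. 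Hence
\begin{align*}
\N(G)=\Bigl(1+\binom{l}{2}\Bigr)\Bigl(1+\binom{n-l+1}{2}\Bigr)+\binom{l}{2}+\binom{n-l+1}{2}\,.
\end{align*}

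It then remains to show this quantity exceeds $\N(C_n)=n^2-n+1$ for all admissible $l$ (with $3\le l\le n-2$, say, so that both cycles are genuine). Setting $a:=\binom{l}{2}$ and $b:=\binom{n-l+1}{2}$, the expression equals $(1+a)(1+b)+a+b=1+2a+2b+ab$, and I would compare this with $n^2-n+1=2\binom{n}{2}+1$. The point is that $a+b\le \binom{n}{2}$ with a substantial deficit (since splitting a cycle of length near $n$ into two pieces strictly decreases the sum of the "triangle-number" weights), so the $ab$ term, which is at least $\binom{3}{2}\binom{3}{2}=9$ in the worst case and grows, more than compensates; concretely $1+2a+2b+ab-(n^2-n+1)=ab-2\bigl(\binom{n}{2}-a-b\bigr)$, and I would check $ab\ge 2\bigl(\binom{n}{2}-a-b\bigr)$ directly. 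The main (and only real) obstacle is this last elementary inequality in $l$ and $n$: one must verify it uniformly, presumably by treating $a+b$ as a function of $l$ that is minimised at the extreme $l=3$ (where $a=3$, $b=\binom{n-2}{2}$) and checking the bound there, then noting $ab$ only grows as the split becomes more balanced. This is a routine convexity/monotonicity computation, so I expect no genuine difficulty — the two inequalities together then give the chain $\N(G)>\N(C_n)>\N(D_n)$, completing the proof.
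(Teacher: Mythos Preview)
Your approach is essentially the same as the paper's: both compute $\N(G)$ via the cut-vertex decomposition at $w$, arriving at
\[
\N(G)=\Bigl(1+\binom{l}{2}\Bigr)\Bigl(1+\binom{n-l+1}{2}\Bigr)+\binom{l}{2}+\binom{n-l+1}{2},
\]
and both handle $\N(C_n)-\N(D_n)=(n^2-7n+8)/2>0$ by direct computation. The only divergence is in verifying $\N(G)>\N(C_n)$: the paper factors the difference explicitly as
\[
\N(G)-\N(C_n)=\frac{(l-1)(l-n)(l^2-(n+1)l+8)}{4}
\]
and reads off the sign of each factor for $3\le l\le n-2$, whereas you propose a convexity argument in the variables $a=\binom{l}{2}$, $b=\binom{n-l+1}{2}$.

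There is one genuine slip in your sketch: your claim that $a+b$ is \emph{minimised} at the extreme $l=3$ is false. The function $l\mapsto \binom{l}{2}+\binom{n-l+1}{2}$ is convex and symmetric about $l=(n+1)/2$, so it is minimised in the middle and \emph{maximised} at $l=3$. Consequently, as $l$ moves toward the centre, both $ab$ and the right-hand side $2\bigl(\binom{n}{2}-a-b\bigr)$ increase, and your monotonicity argument as stated does not immediately isolate the worst case. It is still true that the difference $ab-2\bigl(\binom{n}{2}-a-b\bigr)$ is smallest at $l=3$ (and checking there gives $3(n-2)(n-3)/2>4(n-3)$, which holds for $n\ge 5$), but establishing that the endpoint is indeed the minimum requires a correct argument. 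The paper's direct factorization sidesteps this entirely.
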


\begin{proof}
	Simply note that
	\begin{align*}
	\N(G)=\N(C_l)_u \cdot \N(C_{n-l+1})_v+ \N(P_{l-1}) +  \N(P_{n-l})\,,
	\end{align*}
	and that the difference $\N(G)-\N(C_n)$ is given by
	\begin{align*}
	\N(G)-\N(C_n) &=\big(1+\binom{l}{2}\big)\big(1+\binom{n-l+1}{2}\big)+ \binom{l}{2}+ \binom{n-l+1}{2} - (n^2-n+1)\\
	&=\frac{(l-1)(l-n)(l^2-(n+1)l +8)}{4} >0
	\end{align*}
	as $3\leq l \leq n-2$. Likewise, the difference $\N(C_n)-\N(D_n)$ is given by
	\begin{align*}
	\N(C_n)-\N(D_n) &= (n^2-n+1) - \frac{(n-1)(n+6)}{2}\\
	&=\frac{n^2-7n+8}{2}>0\,.
	\end{align*}
\end{proof}

\begin{lemma}\label{qksliding} 
	Let $L, R$ be two fixed non-trivial vertex disjoint connected graphs such that $u\in V(L)$ and $v \in V(R)$. Consider two vertex disjoint paths $P_k,P_q$ for some $q\geq 2$. Identify $u$ with both a leaf of $P_k$ as well as a leaf of $P_q$; further, identify $v$ with the other leaf of $P_q$. Denote by $H(k;q)$ the resulting graph. If $k> 1$, then we have
	\begin{align*}
	\N(H(k;q)) > \N(H(1;q+k-1))\,.
	\end{align*}
	
	\begin{proof}
		We use our standard decomposition again:
		\begin{align*}
		\N(H(k;q))&=\N(H(k;q))_{u,v}+\N(H(k;q)-v)_u + \N(H(k;q)-u)_v + \N(H(k;q)-u-v)\\
		&=k\cdot \N(L)_u \cdot \N(R)_v + (q-1) (k\cdot \N(L)_u  + \N(R)_v )\\
		&+ \N(L-u)+\binom{k}{2} +\binom{q-1}{2} + \N(R-v)\,,
		\end{align*}
		and
		\begin{align*}
		\N(H(1;q+k-1))&=\N(L)_u \cdot \N(R)_v + (q+k-2)(\N(L)_u  + \N(R)_v )\\
		&+ \N(L-u) +\binom{q+k-2}{2} + \N(R-v)\,.
		\end{align*}
		It follows that
		\begin{align*}
		\N(H(k;q)) &- \N(H(1;q+k-1))=(k-1)(\N(L)_u -1) \N(R)_v \\
		& + (k-1)(q-2)\N(L)_u +\binom{k}{2} +\binom{q-1}{2}  - \binom{q+k-2}{2}\,.
		\end{align*}
		On the other hand, we have
		\begin{align*}
		2(k-1)(q-2) +\binom{k}{2} +\binom{q-1}{2}  - \binom{q+k-2}{2}=(k-1)(q-2)\geq 0\,.
		\end{align*}
		Therefore, using the assumption that $\N(L)_u\geq 2$, we derive that
		\begin{align*}
		\N(H(k;q)) - \N(H(1;q+k-1))\geq (k-1)(\N(L)_u -1) \N(R)_v  + (k-1)(q-2)>0
		\end{align*}
		provided that $k\neq 1$. This completes the proof.
	\end{proof}
\end{lemma}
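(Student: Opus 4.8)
The plan is to run the by-now-standard $\N$-decomposition with respect to the two branch vertices $u$ and $v$, just as in the proofs of Lemma~\ref{PathOrder} and Proposition~\ref{PropTadpole}: for any connected graph $F$ with $u,v\in V(F)$ one has $\N(F)=\N(F)_{u,v}+\N(F-v)_u+\N(F-u)_v+\N(F-u-v)$, and the point is that all four terms are computable in closed form for $H(k;q)$, because deleting $u$ and/or $v$ splits $H(k;q)$ into the fixed pieces $L-u$, $R-v$, and (at most) two pendant paths — one coming from $P_k$, the other from the truncated $P_q$ — each attached at a single former leaf. The value of $\N\bigl(H(1;q+k-1)\bigr)$ is then obtained for free by specialising the same formula to $k=1$.

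First I would write down the four pieces for $H(k;q)$. A connected induced subgraph meeting both $L$ and $R$ must contain the whole $u$--$v$ path, so the $L$-part, the $R$-part, and the initial segment along the pendant $P_k$ are chosen independently, giving $\N(H(k;q))_{u,v}=k\,\N(L)_u\,\N(R)_v$. A $u$-containing subgraph that avoids $v$ chooses an initial segment along each of the two paths hanging off $u$, so $\N(H(k;q)-v)_u=k(q-1)\,\N(L)_u$; symmetrically $\N(H(k;q)-u)_v=(q-1)\,\N(R)_v$. Finally $H(k;q)-u-v$ is a disjoint union, whence $\N(H(k;q)-u-v)=\N(L-u)+\N(R-v)+\N(P_{k-1})+\N(P_{q-2})=\N(L-u)+\N(R-v)+\binom{k}{2}+\binom{q-1}{2}$, using $\N(P_m)=m(m+1)/2=\binom{m+1}{2}$ from the first of the basic lemmas.

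Then I would subtract. The $\N(L-u)$ and $\N(R-v)$ contributions cancel, the coefficient of $\N(L)_u$ drops to $(k-1)(q-2)$, that of $\N(R)_v$ drops to $-(k-1)$, and the leftover binomial tail $\binom{k}{2}+\binom{q-1}{2}-\binom{q+k-2}{2}$ collapses to $-(k-1)(q-2)$ via the identity $\binom{a+b}{2}=\binom{a}{2}+\binom{b}{2}+ab$ (applied with $a=k$, $b=q-1$, after one trivial reindexing). Regrouping, I expect the difference to factor as $(k-1)\bigl(\N(L)_u-1\bigr)\bigl(\N(R)_v+q-2\bigr)$, which is strictly positive as soon as $k>1$, since non-triviality of $L$ and $R$ forces $\N(L)_u\geq 2$ and $\N(R)_v\geq 2$ and $q\geq 2$ is given. (Even short of the full factorisation, writing the difference as $(k-1)(\N(L)_u-1)\N(R)_v+(k-1)(q-2)(\N(L)_u-1)$ already makes positivity visible.)

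I do not foresee a genuine obstacle; this is a bookkeeping argument. The two places that need care are (i) recognising that the two path-attachments at $u$ contribute a \emph{product} of independent choices (the factor $k(q-1)$, not $k+q-1$) when enumerating $u$-containing, $v$-avoiding subgraphs, and (ii) feeding the binomial identity the correct arguments so that the quadratic terms cancel exactly against the linear coefficients. Everything else is the same algebra already carried out in the preceding lemmas.
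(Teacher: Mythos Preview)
Your proposal is correct and follows essentially the same $u,v$-decomposition as the paper's proof, with the same closed-form expressions for $\N(H(k;q))$ and $\N(H(1;q+k-1))$. Your finish is in fact slightly cleaner: the paper stops at the bound $(k-1)(\N(L)_u-1)\N(R)_v+(k-1)(q-2)$ obtained by using $\N(L)_u\ge 2$, whereas you carry the algebra one step further to the full factorisation $(k-1)(\N(L)_u-1)(\N(R)_v+q-2)$ (just note that the binomial identity should be applied with $a=k-1$, $b=q-1$ rather than $a=k$).
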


Our next lemma captures the special case $q=1$ that is missing in Lemma~\ref{qksliding}.

\begin{lemma}\label{q=1ksliding}
	Let $L, R$ be two fixed non-trivial vertex disjoint connected graphs such that $u\in V(L)$ and $v \in V(R)$. Consider the path $P_k, k\geq 2$ and let $w$ be a leaf of $P_k$. Identify $w$ with both $u$ and $v$. Denote by $H(k;1)$ the resulting graph. Then we have
	\begin{align*}
	\N(H(k;1)) > \N(H(1;k))\,,
	\end{align*}
	where $H(1;k)$ is the graph described in Lemma~\ref{qksliding}.
\end{lemma}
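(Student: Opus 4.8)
The plan is to imitate the proof of Lemma~\ref{qksliding}: produce closed formulas for $\N(H(k;1))$ and $\N(H(1;k))$, then subtract and factor. The essential difference is that in $H(k;1)$ the vertices $u$ and $v$ are identified into a single vertex, call it $w$, so the appropriate decomposition for $H(k;1)$ is with respect to the single vertex $w$, whereas for $H(1;k)$ one keeps the standard decomposition with respect to $u$ and $v$ (which are now joined by the path $P_k$ and carry no pendant path).

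For $H(k;1)$ I would split connected induced subgraphs according to whether they contain $w$. Deleting $w$ leaves the disjoint union of $L-u$, $R-v$, and the path $P_{k-1}$ formed by the non-$w$ vertices of $P_k$, contributing $\N(L-u)+\N(R-v)+\binom{k}{2}$ subgraphs. A subgraph containing $w$ is obtained by independently choosing a connected induced subgraph of $L$ through $u$, one of $R$ through $v$, and one of the $k$ initial segments of the pendant path, giving $k\cdot\N(L)_u\cdot\N(R)_v$. Hence
\[
\N(H(k;1)) = k\,\N(L)_u\,\N(R)_v + \N(L-u) + \N(R-v) + \binom{k}{2}\,.
\]
For $H(1;k)$, using the standard decomposition with respect to $u,v$: a subgraph containing both $u$ and $v$ must contain the entire connecting path, contributing $\N(L)_u\,\N(R)_v$; a subgraph containing $u$ but not $v$ lives in $L$ with the pendant path $q_2,\dots,q_{k-1}$ attached and contributes $(k-1)\N(L)_u$, and symmetrically $(k-1)\N(R)_v$ for $v$ but not $u$; subgraphs containing neither contribute $\N(L-u)+\N(R-v)+\binom{k-1}{2}$. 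This yields
\[
\N(H(1;k)) = \N(L)_u\,\N(R)_v + (k-1)\bigl(\N(L)_u+\N(R)_v\bigr) + \N(L-u) + \N(R-v) + \binom{k-1}{2}\,.
\]

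Subtracting the two expressions and using $\binom{k}{2}-\binom{k-1}{2}=k-1$, all the $\N(L-u)$ and $\N(R-v)$ terms cancel and the remainder factors as
\[
\N(H(k;1)) - \N(H(1;k)) = (k-1)\bigl(\N(L)_u-1\bigr)\bigl(\N(R)_v-1\bigr)\,,
\]
which is strictly positive since $k\geq 2$ and, $L$ and $R$ being non-trivial connected graphs, each of $\N(L)_u$ and $\N(R)_v$ is at least $2$. The only point needing genuine care --- the main obstacle here --- is to make sure the path-segment counts are tallied correctly in the degenerate small cases (for instance $k=2$, where several of the attached paths are empty, or when $L-u$ or $R-v$ is disconnected or empty); once both identities are verified the conclusion is immediate.
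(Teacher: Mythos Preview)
Your proof is correct and essentially identical to the paper's own argument: the paper writes down precisely the same two closed formulas for $\N(H(k;1))$ and $\N(H(1;k))$, subtracts, and obtains the same factorisation $(k-1)(\N(L)_u-1)(\N(R)_v-1)>0$.
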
	
\begin{proof}
	Simply note that
	\begin{align*}
	\N(H(k;1))=k\cdot \N(L)_u \cdot \N(R)_v +\N(L-u)+\binom{k}{2} +\N(R-v)\,,
	\end{align*}
	and that
	\begin{align*}
	\N(H(1;k))&=\N(L)_u \cdot \N(R)_v + (k-1) (\N(L)_u  + \N(R)_v )\\
	& + \N(L-u)+\binom{k-1}{2} + \N(R-v)\,.
	\end{align*}
	In particular, we get
	\begin{align*}
	\N(H(k;1)) - \N(H(1;k))=(k-1)(\N(L)_u -1)(\N(R)_v -1)>0\,.
	\end{align*}
\end{proof}

The following lemma is a variant of the combination of Lemmas~\ref{qksliding} and~\ref{q=1ksliding}. 
\begin{lemma}\label{Refqksliding}
	Let $L, R$ be two fixed non-trivial vertex disjoint connected graphs such that $u,w\in V(L),~u\neq w$ and $v \in V(R)$. Consider three vertex disjoint paths $P_k,P_q,P_{q+k-1}$ for some $k>1$. Let $G_1,G_2$ be the two graphs constructed as follows:
	\begin{itemize}
	\item If $q>1$, then identify $w$ with a leaf of $P_k$, $u$ with a leaf of $P_q$, and $v$ with the other leaf of $P_q$ to obtain $G_1$. If $q=1$, then identify $w$ with a leaf of $P_k$, and $u$ with $v$ to obtain $G_1$. 
	\item Identify $u$ with a leaf of $P_{q+k-1}$, and $v$ with the other leaf of $P_{q+k-1}$ to obtain $G_2$.
	\end{itemize}
	We have 
	\begin{align*}
	|V(G_1)|=|V(G_2)| \quad \text{and} \quad \N(G_1)>\N(G_2)\,.
	\end{align*}

\begin{proof}
Denote by $J$ the subgraph of $G_1$ that consists of $L$ and a leaf of $P_k$ attached to $L$ at vertex $w$. Assume that $q>1$. Then we have
		\begin{align*}
		\N(G_1)&=\N(G_1)_{u,v}+\N(G_1-v)_u + \N(G_1-u)_v + \N(G_1-u-v)\\
		&=\N(J)_u \cdot \N(R)_v + (q-1) (\N(J)_u  + \N(R)_v )\\
		&+ \N(J-u) +\binom{q-1}{2} + \N(R-v)\,,
		\end{align*}
		and
\begin{align*}
		\N(G_2)&=\N(L)_u \cdot \N(R)_v + (q+k-2)(\N(L)_u  + \N(R)_v )\\
		&+ \N(L-u) +\binom{q+k-2}{2} + \N(R-v)\,.
		\end{align*}
In particular, we get		
\begin{align*}
\N(G_1)-\N(G_2)&= \N(R)_v(\N(J)_u - \N(L)_u +1-k)  +(q-1)\N(J)_u -(q+k-2)\N(L)_u \\
&+ \N(J-u) - \N(L-u) +\binom{q-1}{2} -\binom{q+k-2}{2}\,.
\end{align*}		
On the other hand, we have
\begin{align*}
\N(J)_u \geq \N(L)_u + k\cdot \N(L)_{u,w} \geq \N(L)_u + k
\end{align*}
and
\begin{align*}
\N(J-u) \geq \N(L-u) + \binom{k}{2} + k\cdot \N(L-u)_w \geq \N(L-u) + \binom{k}{2} + k\,.
\end{align*}
This implies that
\begin{align*}
\N(G_1)-\N(G_2)&\geq \N(R)_v  +(q-1)(k+\N(L)_u)  -(q+k-2)\N(L)_u +k \\
& +\binom{q-1}{2} +\binom{k}{2} -\binom{q+k-2}{2}\\
&=\N(R)_v  +k\cdot q + (k-1)(q-2) \N(L)_u \\
& +\binom{q-1}{2} +\binom{k}{2} -\binom{q+k-2}{2}\,.
\end{align*}
It follows from the identity
\begin{align*}
2(k-1)(q-2) +\binom{k}{2} +\binom{q-1}{2}  - \binom{q+k-2}{2}=(k-1)(q-2)
\end{align*}
that
\begin{align*}
\N(G_1)-\N(G_2)&\geq \N(R)_v  +k\cdot q + (k-1)(q-2) >0\,.
\end{align*}
Assume that $q=1$. Then we have
\begin{align*}
\N(G_1)&=\N(G_1)_u+\N(G_1-u)=\N(J)_u \cdot \N(R)_v + \N(J-u) + \N(R-v)\\
&\geq \N(R)_v (\N(L)_u + k)+ \N(L-u) + \binom{k}{2} + k + \N(R-v) \,,
\end{align*}
and
\begin{align*}
\N(G_2)&=\N(L)_u \cdot \N(R)_v + (k-1)(\N(L)_u  + \N(R)_v ) + \N(L-u) +\binom{k-1}{2} + \N(R-v)\\
&\geq \N(L)_u \cdot \N(R)_v + (k-1)(2  + \N(R)_v ) + \N(L-u) +\binom{k-1}{2} + \N(R-v)\,.
\end{align*}
In particular, we get
\begin{align*}
\N(G_1)-\N(G_2) \geq \N(R)_v + \binom{k}{2} -\binom{k-1}{2} + k - 2(k-1) =1+ \N(R)_v>0\,.
\end{align*}
This completes the proof of the lemma.
\end{proof}
\end{lemma}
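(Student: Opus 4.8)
The plan is to evaluate $\N(G_1)$ and $\N(G_2)$ by the decomposition with respect to $u$ and $v$ that has been used throughout, and to reduce the comparison to what happens when the path $P_k$ is hung at the interior vertex $w$ of $L$ rather than being spliced into the $u$--$v$ joining path. The equality $|V(G_1)|=|V(G_2)|$ is immediate from a vertex count: when $q>1$, each of $G_1,G_2$ consists of $L$, of $R$, and of $k+q-3$ further vertices (the interior vertices of the attached paths, after the identifications), and when $q=1$ each consists of $L$, of $R$ with $u$ and $v$ counted once, and of $k-2$ further vertices.

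For the inequality I would first treat the generic case $q>1$. Let $J$ be the graph obtained from $L$ by identifying $w$ with a leaf of $P_k$; this is precisely the part of $G_1$ lying on the $L$-side of the joining path. Since $G_1$ (with rooted pieces $J$ and $R$) and $G_2$ (with rooted pieces $L$ and $R$) are both of the shape ``two rooted connected graphs joined by an internally disjoint path'', the standard decomposition gives
\begin{align*}
\N(G_1)&=\N(J)_u\,\N(R)_v+(q-1)\bigl(\N(J)_u+\N(R)_v\bigr)+\N(J-u)+\binom{q-1}{2}+\N(R-v),\\
\N(G_2)&=\N(L)_u\,\N(R)_v+(q+k-2)\bigl(\N(L)_u+\N(R)_v\bigr)+\N(L-u)+\binom{q+k-2}{2}+\N(R-v).
\end{align*}
Classifying the connected induced subgraphs of $J$, respectively of $J-u$, according to how long an initial segment of the attached path they contain gives
\begin{align*}
\N(J)_u=\N(L)_u+(k-1)\N(L)_{u,w},\qquad \N(J-u)=\N(L-u)+\binom{k}{2}+(k-1)\N(L-u)_w,
\end{align*}
the factor $\N(L)_{u,w}$ (respectively $\N(L-u)_w$) arising because any subgraph entering the attached path must already join $u$ to $w$ (respectively reach $w$) inside $L$. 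Substituting these into $\N(G_1)-\N(G_2)$ and simplifying by means of the polynomial identity $\binom{k}{2}+\binom{q-1}{2}-\binom{q+k-2}{2}=-(k-1)(q-2)$, the difference takes the form $(k-1)\bigl[(\N(L)_{u,w}-1)(\N(R)_v+q-1)+\N(L-u)_w+1-\N(L)_u\bigr]$; since $\N(L)_{u,w}\geq 1$ and $\N(L-u)_w\geq 1$, one then uses the structure of $L$ near $u$ and $w$ provided by the hypotheses (and the non-triviality of $L$ and $R$, giving $\N(L)_u\geq 2$ and $\N(R)_v\geq 2$) to conclude that the bracket is strictly positive.

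The case $q=1$ is the degeneration in which $u$ and $v$ are identified in $G_1$ (so the joining path has length $0$) while $P_{q+k-1}=P_k$ joins $u$ to $v$ in $G_2$. Here I would decompose $\N(G_1)$ through the single cut vertex $u=v$, obtaining $\N(G_1)=\N(J)_u\,\N(R)_v+\N(J-u)+\N(R-v)$, decompose $\N(G_2)$ with respect to $u$ and $v$ as before, and apply the same two identities for $J$; the difference simplifies, after using $\N(R)_v\geq 2$, to a strictly positive quantity of the form $1+\N(R)_v$ up to manifestly nonnegative terms.

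The step I expect to be the main obstacle is this combined counting-and-positivity part. On the counting side one must establish the two identities for $\N(J)_u$ and $\N(J-u)$ with care, separating the subgraphs that avoid the attached path from those that meet it, and among the latter not double-counting the subgraphs contained wholly inside the path. On the positivity side, $\N(G_1)-\N(G_2)$ a priori carries the negative contribution $-(k-1)\N(L)_u$ — reflecting that the joining path of $G_2$ is longer than that of $G_1$ — so one must verify that the contributions of the path newly attached at $w$, encoded by the terms involving $\N(L)_{u,w}$ and $\N(L-u)_w$, dominate it; the $\binom{\cdot}{2}$ identity above is exactly what makes the arithmetic close. Keeping the $q=1$ degeneracy genuinely separate is a second, smaller point of care, since there the decomposition runs through a single vertex rather than a pair joined by a path.
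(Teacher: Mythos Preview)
Your overall plan matches the paper's: define $J$ as $L$ with $P_k$ grafted at $w$, decompose $\N(G_1)$ and $\N(G_2)$ through $u$ and $v$, and compare. Your exact identities
\[
\N(J)_u=\N(L)_u+(k-1)\N(L)_{u,w},\qquad \N(J-u)=\N(L-u)+\binom{k}{2}+(k-1)\N(L-u)_w
\]
are correct, and the resulting closed form
\[
\N(G_1)-\N(G_2)=(k-1)\Bigl[(\N(L)_{u,w}-1)(\N(R)_v+q-1)+\N(L-u)_w+1-\N(L)_u\Bigr]
\]
is right (and indeed sharper than the paper's treatment, which writes $\N(J)_u\ge \N(L)_u+k\cdot\N(L)_{u,w}$; that inequality is in fact off by one, as the example $L=P_2$ shows).

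The genuine gap is your last step. You write that ``the structure of $L$ near $u$ and $w$ provided by the hypotheses'' forces the bracket to be strictly positive, but the hypotheses give nothing beyond $u\neq w$ and $|V(L)|\ge 2$. Take $L$ to be any path with $u$ and $w$ as its two leaves. Then $\N(L)_{u,w}=1$, $\N(L-u)_w=|V(L)|-1$, and $\N(L)_u=|V(L)|$, so the bracket vanishes identically, for every $q\ge 1$ and every choice of $R$. Concretely, with $L=R=P_2$ and $k=q=2$ one has $G_1\cong G_2\cong P_5$, so the strict inequality $\N(G_1)>\N(G_2)$ fails. Thus the positivity cannot be extracted from the stated hypotheses; one needs an extra assumption on $L$ (for instance $\N(L)_{u,w}\ge 2$, which in the application of Theorem~\ref{Theo:p=0} is guaranteed because $u$ and $w$ both lie on the cycle--path skeleton), or the conclusion must be weakened to a non-strict inequality. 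The same caveat applies to your $q=1$ paragraph: your expression reduces to the same bracket with $q=1$, and the same path example makes it zero. The paper's own argument slides past this point because the off-by-one in its bound for $\N(J)_u$ manufactures an extra $+1$ that is not actually there.
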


We finish our preliminaries with the following lemma, which is similar in nature but different to Lemma~\ref{PathOrder} (see Section~\ref{connected c cut}).

\begin{lemma}\label{MINPathOrder}
Let $H$ be a connected graph of order greater than two, and $u,v$ two distinct vertices of $H$ such that $\N(H)_{u,v}>1$ and $\N(H-v)_u \leq  \N(H-u)_v$. Let $H(n_1;n_2)$ be the graph obtained from $H$ by identifying $u$ with a leaf of $P_{n_1}$, and $v$ with a leaf of $P_{n_2}$ for some $n_1,n_2\geq 1$. We have
\begin{align*}
	\N(H(n_1;n_2)) \geq \N(H(n_1+n_2-1;1))\,.
	\end{align*}
	Moreover, the inequality is strict if $n_1,n_2>1$.
\end{lemma}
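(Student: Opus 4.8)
The strategy mirrors the proof of Lemma~\ref{PathOrder}: decompose $\N(H(n_1;n_2))$ according to whether a connected induced subgraph meets $\{u,v\}$, then compare with the degenerate configuration where one path has been absorbed into the other. First I would record the master formula obtained exactly as in Lemma~\ref{PathOrder}, namely
\begin{align*}
\N(H(n_1;n_2))&=n_1 n_2 \N(H)_{u,v}+n_1 \N(H-v)_u + n_2 \N(H-u)_v\\
&\qquad +\binom{n_1}{2}+\binom{n_2}{2}+\N(H-u-v)\,,
\end{align*}
which remains valid for all $n_1,n_2\geq 1$ with the convention $\N(P_0)=\binom{1}{2}=0$. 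Applying this to $H(n_1+n_2-1;1)$ (so the second appended path is the trivial one-vertex path, which just means $v$ receives no new vertices), one gets
\begin{align*}
\N(H(n_1+n_2-1;1))&=(n_1+n_2-1)\N(H)_{u,v}+(n_1+n_2-1)\N(H-v)_u\\
&\qquad +\N(H-u)_v+\binom{n_1+n_2-1}{2}+\N(H-u-v)\,.
\end{align*}

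Subtracting, the $\N(H-u-v)$ terms cancel and the calculation reduces to a purely numerical comparison of the coefficients. The key algebraic identity is $n_1 n_2-(n_1+n_2-1)=(n_1-1)(n_2-1)$, which governs the $\N(H)_{u,v}$ coefficient; similarly $n_1-(n_1+n_2-1)=-(n_2-1)$ controls the $\N(H-v)_u$ coefficient and $n_2-1$ the $\N(H-u)_v$ coefficient; and a short manipulation gives $\binom{n_1}{2}+\binom{n_2}{2}-\binom{n_1+n_2-1}{2}=-(n_1-1)(n_2-1)$. Collecting these,
\begin{align*}
\N(H(n_1;n_2))-\N(H(n_1+n_2-1;1))=(n_1-1)(n_2-1)\bigl(\N(H)_{u,v}-1\bigr)+(n_2-1)\bigl(\N(H-u)_v-\N(H-v)_u\bigr)\,.
\end{align*}
Both summands are nonnegative: the first because $\N(H)_{u,v}\geq 1$ by hypothesis (indeed $\N(H)_{u,v}>1$) and $n_1,n_2\geq 1$, the second because of the standing assumption $\N(H-v)_u\leq\N(H-u)_v$. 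This proves the inequality. Moreover, if $n_1,n_2>1$ then $(n_1-1)(n_2-1)\geq 1$ and $\N(H)_{u,v}-1\geq 1$, so the first term alone is strictly positive, giving the strict inequality claimed.

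There is essentially no obstacle here — the only points requiring minor care are the boundary conventions (checking that the master formula degenerates correctly when $n_2=1$, i.e.\ that identifying $v$ with the leaf of $P_1$ adds nothing, so $\N(P_{n_2-1})=\N(P_0)=0$ and $n_2\cdot\N(H-u)_v$ becomes $1\cdot\N(H-u)_v$), and verifying the binomial identity $\binom{n_1}{2}+\binom{n_2}{2}-\binom{n_1+n_2-1}{2}=-(n_1-1)(n_2-1)$ by direct expansion. Everything else is a transcription of the bookkeeping already carried out in Lemma~\ref{PathOrder}, with the roles of the two endpoints arranged so that shifting mass onto the longer arm is the \emph{favourable} direction for the minimisation problem rather than the maximisation problem.
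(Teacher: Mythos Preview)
Your proof is correct and essentially identical to the paper's: both invoke the decomposition formula from Lemma~\ref{PathOrder}, specialise it to $H(n_1+n_2-1;1)$, and subtract to obtain the expression $(n_1-1)(n_2-1)(\N(H)_{u,v}-1)+(n_2-1)(\N(H-u)_v-\N(H-v)_u)$, from which the inequality and the strictness claim follow immediately. The only cosmetic difference is that the paper computes $\N(H(n_1+n_2-1;1))-\N(H(n_1;n_2))$ rather than the reverse difference.
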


\begin{proof}
	By the proof of Lemma~\ref{PathOrder}, we have
	\begin{align*}
	\N(H(n_1;n_2))&=n_1\cdot n_2\cdot \N(H)_{u,v}+n_1\cdot \N(H-v)_u + n_2\cdot \N(H-u)_v\\
	&+\N(P_{n_1-1})+ \N(P_{n_2-1})+\N(H-u-v)\,.
	\end{align*}
	In particular, we get
	\begin{align*}
	\N(H(n_1+n_2-1;1))&=(n_1+n_2-1) \N(H)_{u,v}+(n_1+n_2-1) \N(H-v)_u +  \N(H-u)_v\\
	&+\N(P_{n_1+n_2-2})+\N(H-u-v)\,,
	\end{align*}
	which implies that
	\begin{align*}
	\N(H(n_1+&n_2-1;1))-\N(H(n_1;n_2))=(n_1+n_2-1-n_1\cdot n_2)\N(H)_{u,v}\\
	& + (n_2-1) \N(H-v)_u +(1-n_2) \N(H-u)_v\\
	&+\binom{n_1+n_2-1}{2} - \binom{n_1}{2} - \binom{n_2}{2}\\
	&=(n_2-1)(\N(H-v)_u -  \N(H-u)_v) -(n_1-1)(n_2-1)(\N(H)_{u,v} -1) \leq 0\,.
	\end{align*}
	Moreover, we have $\N(H(n_1+n_2-1;1))< \N(H(n_1;n_2))$ if $n_1>1$ and $n_2>1$. The statement of the lemma follows.
\end{proof}	

By rooted path, we mean a path rooted at one of its leaves. Our main result reads as follows:

\begin{theorem}\label{Theo:p=0}
	Let $n>5$ be a positive integer. For every graph $G \in  \mathcal{G}(n,0)$, we have
	\begin{align*}
	\N(G)\geq \N(D_n)=  \frac{(n-1)(n+6)}{2}\,,
	\end{align*}
	and $D_n \in  \mathcal{G}(n,0)$ is the only graph with this property.
\end{theorem}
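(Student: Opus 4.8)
The plan is to fix a graph $G\in\mathcal{G}(n,0)$ realising the minimum of $\N$ on this finite set and to reduce it to $D_n$ through a chain of \emph{strict} transformations, each of the shape ``if $G$ lacks property $(\ast)$ then some $G^{\ast}\in\mathcal{G}(n,0)$ has $\N(G^{\ast})<\N(G)$''. If $G$ is $2$-connected then Theorem~\ref{Main2Cut} gives $\N(G)\geq\N(C_n)$, while Lemma~\ref{ordCnandDn} gives $\N(C_n)>\N(D_n)$, so in fact $G$ has a cut vertex; and since $p(G)=0$, no end block of $G$ can be a $P_2$, so $G$ has at least two end blocks, each $2$-connected on at least three vertices. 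Two easy reductions come next. First, no non-bridge edge $e=uv$ can have $\deg_G(u),\deg_G(v)\geq 3$: otherwise $G-e$ is connected with minimum degree $\geq 2$, hence $G-e\in\mathcal{G}(n,0)$, yet the vertex pair $\{u,v\}$ induces a connected subgraph in $G$ but not in $G-e$, so $\N(G-e)\leq\N(G)-1$. Consequently (the same removal applied to a redundant edge of a block) every block of $G$ is either $P_2$ or minimally $2$-connected, and running the counting argument of Theorem~\ref{Main2Cut} inside a single block forces every block to be a cycle; moreover the cut vertices lying on any given cycle-block are pairwise nonadjacent on it.

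At this stage $G$ consists of cycles linked by bridges. The local tool is the decomposition
\[
\N(G)=\binom{l}{2}+\N(G'-w)+\Bigl(1+\binom{l}{2}\Bigr)\N(G')_w ,
\]
valid whenever $C_l$ is an end block of $G$ with cut vertex $w$ and $G'=G-(V(C_l)\setminus\{w\})$ (split the connected induced subgraphs according to whether they contain $w$, and use $\N(C_l-w)=\binom l2=\N(C_l)_w-1$). For fixed $G'$ and $w$ this is strictly increasing in $l$; this is the mechanism underlying Proposition~\ref{PropTadpole}, and it will ultimately force both end cycles of $G$ to be triangles. The remaining, and principal, task is to show that $G$ has exactly two cycle-blocks joined by a path, i.e.\ $G=D_n(l;r)$ for some $l,r\ge 3$ with $l+r\le n$ (two cycles meeting in one vertex being ruled out directly by Lemma~\ref{ordCnandDn}). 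Here the sliding lemmas are used: if the block-cut tree of $G$ branches, or carries a cycle-block that is not an end block, then $G$ contains a ``core'' $L$ to which two independent appendages are attached --- one leading to a surplus end cycle, the other being the rest of $G$. Deleting the two non-cut vertices of a pendant end triangle exposes a genuine pendant path; sliding it by Lemmas~\ref{qksliding}, \ref{q=1ksliding} and~\ref{Refqksliding}, coalescing any leftover pendant paths by Lemma~\ref{MINPathOrder}, and then re-closing a triangle, produces $G^{\ast}\in\mathcal{G}(n,0)$ with $\N(G^{\ast})<\N(G)$. Iterating eliminates all branching and all interior cycles, so $G=D_n(l;r)$; Proposition~\ref{PropTadpole} then forces $(l,r)=(3,3)$ and gives $\N(D_n)=(n-1)(n+6)/2$, and the uniqueness of $D_n$ follows since every reduction was strict.

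The hard part will be this last step. The sliding lemmas presuppose a genuine pendant path, which no graph in $\mathcal{G}(n,0)$ possesses, so each reduction must pass through auxiliary graphs that temporarily carry pendant paths, and one must check at every stage both that $\N$ strictly decreases and that the endpoint of the chain lies back in $\mathcal{G}(n,0)$; tracking which of two attachment vertices is ``heavier'' (so that Lemmas~\ref{MINPathOrder} and~\ref{Refqksliding} are applied in the correct direction) is the main bookkeeping burden, and a careful case analysis of how an extra cycle or a branch sits relative to the rest of $G$ will be unavoidable. The only other delicate points are the localisation of the Theorem~\ref{Main2Cut} counting to a single block and the small values $n=6,7$, where $D_n$ is nearly degenerate and the inequalities in Lemma~\ref{ordCnandDn} and Proposition~\ref{PropTadpole} should be verified directly.
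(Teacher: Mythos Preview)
Your overall architecture matches the paper's, but there is a genuine gap at the step ``every block of the minimiser is a cycle''. Your edge-deletion argument correctly shows that in a minimiser every non-bridge edge meets a vertex of degree~$2$, and hence every $2$-connected block is minimally $2$-connected. But that condition does \emph{not} force a block to be a cycle: $K_{2,3}$ is minimally $2$-connected and every one of its edges is incident with a degree-$2$ vertex. Your remedy, ``running the counting argument of Theorem~\ref{Main2Cut} inside a single block'', does not go through as stated. Theorem~\ref{Main2Cut} bounds the \emph{total} count $\N(B)$ from below by $\N(C_{|V(B)|})$, whereas substituting a block $B$ (attached to the rest of $G$ at cut vertices $w_1,\dots,w_k$) by a cycle requires comparing the \emph{rooted} quantities $\N(B)_{S}$ for every subset $S\subseteq\{w_1,\dots,w_k\}$, and with $k\ge 2$ there is no canonical way to place the $w_i$ on the replacement cycle. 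Moreover, since $B$ need not be Hamiltonian, you cannot reach a spanning cycle of $B$ by edge deletions alone, so the replacement is not an edge-deletion step in disguise. You flag this localisation as ``delicate'' but give no mechanism for it; as written it is the missing idea.

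The paper sidesteps this completely by \emph{not} insisting that intermediate graphs remain in $\mathcal{G}(n,0)$. Starting from any $H_1\in\mathcal{G}(n,0)$ with a cut vertex, it simply deletes edges (possibly creating pendant vertices) until exactly two cycle-blocks survive and every other block is a $P_2$; the resulting $H_3$ is then two cycles joined by a path with trees hanging off, and from there Lemmas~\ref{MINPathOrder}, \ref{qksliding}, \ref{q=1ksliding}, \ref{Refqksliding} and Proposition~\ref{PropTadpole} reduce it to $D_n$. Every step weakly decreases $\N$, and since edge deletion is strictly decreasing whenever an edge is actually removed, equality throughout the chain forces $H_1=D_n$. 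This makes your programme of ``delete two non-cut vertices of an end triangle, slide, re-close'' unnecessary: once you allow pendants at intermediate stages, no order-changing detours are needed and the bookkeeping you worry about in your final paragraph disappears.
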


\begin{proof}
Let $\mathbb{G} \in  \mathcal{G}(n,0)$ be a connected graph with order $n$ and no pendant vertex that minimises the number of connected induced subgraphs. We are going to show that $\mathbb{G}$ can be obtained from certain graphs $H_1 \in \mathcal{G}(n,0)$ through a series of graph transformations that preserve the number of vertices.

First off, note that $\mathbb{G}$ must have at least one cut vertex, since otherwise $\N(\mathbb{G})\geq \N(C_n)$ by virtue of Theorem~\ref{Main2Cut}, while Lemma~\ref{ordCnandDn} implies that $\N(C_n)>\N(D_n)$. Fix $H_1 \in \mathcal{G}(n,0)$ such that $H_1$ has at least one cut vertex. If we remove edges from a graph, the number of connected induced subgraphs decreases. Starting from $H_1$, we can thus remove certain edges until we reach a connected graph with only two distinct cyclic blocks, say $B_1,B_2$. More precisely, all blocks, except only two of $H_1$ are replaced with any generic of their spanning trees. This yields a new graph $H_2$ which may contain a pendant vertex. Moreover, we have $\N(H_1)\geq \N(H_2)$.

In the graph $H_2$, we can remove edges from the blocks $B_1,B_2$ in such a way that the two cyclic blocks of the resulting graph, say $H_3$ are all cycles, say $C_l$ and $C_r$. Hence, $H_3$ consists of two distinct cycles $C_l,C_r$ `separated' by a (possibly trivial) path $P$, together with some trees attached to all vertices of $V(C_l)\cup V(C_r)\cup V(P)$ in $H_3$; see Figure~\ref{Picp=0} for a picture. Moreover, we have $\N(H_2)\geq \N(H_3)$.
	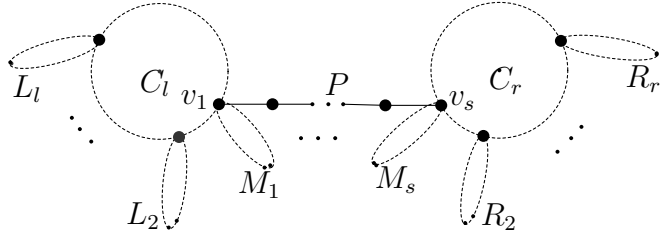
\begin{figure}[!h]
		\definecolor{tttttt}{rgb}{0.2,0.2,0.2}
		\begin{tikzpicture}[scale=1.5,line cap=round,line join=round,>=triangle 45,x=1.0cm,y=1.0cm]
		
		\draw [dash pattern=on 1pt off 1pt] (2.01,5.59) circle (0.6cm);
		\draw [dash pattern=on 1pt off 1pt] (4.99,5.6) circle (0.6cm);
		\draw [rotate around={84.19:(2.14,4.61)},dash pattern=on 1pt off 1pt] (2.14,4.61) ellipse (0.41cm and 0.1cm);
		\draw [rotate around={15:(1.09,5.76)},dash pattern=on 1pt off 1pt] (1.09,5.76) ellipse (0.41cm and 0.1cm);
		\draw [rotate around={78.54:(4.77,4.63)},dash pattern=on 1pt off 1pt] (4.77,4.63) ellipse (0.4cm and 0.09cm);
		\draw [rotate around={-8.63:(5.95,5.8)},dash pattern=on 1pt off 1pt] (5.95,5.8) ellipse (0.43cm and 0.09cm);
		\draw (1.74,5.7) node[anchor=north west] {$C_l$};
		\draw (4.8,5.72) node[anchor=north west] {$C_r $};
		\draw (2.1,5.52) node[anchor=north west] {$ v_1 $};
		\draw (4.45,5.51) node[anchor=north west] {$ v_s $};
		\draw (1.62,4.53) node[anchor=north west] {$L_2 $};
		\draw (0.62,5.65) node[anchor=north west] {$L_l $};
		\draw (4.73,4.52) node[anchor=north west] {$ R_2 $};
		\draw (6.,5.74) node[anchor=north west] {$ R_r $};
		\draw [rotate around={-50.31:(2.76,5.03)},dash pattern=on 1pt off 1pt] (2.76,5.03) ellipse (0.37cm and 0.11cm);
		\draw [rotate around={41.28:(4.18,5.04)},dash pattern=on 1pt off 1pt] (4.18,5.04) ellipse (0.4cm and 0.1cm);
		\draw (2.53,5.3)-- (3,5.3);
		\draw (4.48,5.29)-- (3.99,5.29);
		\draw (3,5.3)-- (3.35,5.3);
		\draw (3.99,5.29)-- (3.62,5.3);
		
		\fill [color=black] (3.25,5.) circle (0.5pt);
		\fill [color=black] (3.4,5.) circle (0.5pt);
		\fill [color=black] (3.55,5.) circle (0.5pt);
		
		\draw (2.6,4.80) node[anchor=north west] {$ M_1 $};
		\draw (3.8,4.85) node[anchor=north west] {$ M_s$};
		\draw (3.36,5.67) node[anchor=north west] {$ P $};
		
		\fill [color=tttttt] (2.01,5.59) circle (0.5pt);
		\fill [color=black] (4.99,5.6) circle (0.5pt);
		\fill [color=black] (2.53,5.3) circle (1.5pt);
		\fill [color=black] (4.48,5.29) circle (1.5pt);
		\fill [color=tttttt] (2.18,5.01) circle (1.5pt);
		\fill [color=black] (1.48,5.87) circle (1.5pt);
		\fill [color=black] (4.85,5.02) circle (1.5pt);
		\fill [color=black] (5.53,5.86) circle (1.5pt);
		\fill [color=black] (2.09,4.21) circle (0.5pt);
		\fill [color=black] (2.16,4.27) circle (0.5pt);
		\fill [color=black] (3.62,5.3) circle (0.5pt);
		\fill [color=black] (3.35,5.3) circle (0.5pt);
		\fill [color=tttttt] (0.71,5.66) circle (0.5pt);
		\fill [color=black] (0.7,5.67) circle (0.5pt);
		\fill [color=black] (4.69,4.25) circle (0.5pt);
		\fill [color=black] (4.76,4.31) circle (0.5pt);
		\fill [color=black] (6.37,5.74) circle (0.5pt);
		\fill [color=black] (6.37,5.75) circle (0.5pt);
		\fill [color=black] (1.24,5.18) circle (0.5pt);
		\fill [color=black] (1.31,5.08) circle (0.5pt);
		\fill [color=black] (1.41,4.97) circle (0.5pt);
		\fill [color=black] (5.71,5.1) circle (0.5pt);
		\fill [color=black] (5.5,4.88) circle (0.5pt);
		\fill [color=black] (2.98,4.76) circle (0.5pt);
		\fill [color=black] (2.93,4.74) circle (0.5pt);
		\fill [color=black] (3.49,5.3) circle (0.5pt);
		\fill [color=black] (3.89,4.78) circle (0.5pt);
		\fill [color=black] (3.91,4.76) circle (0.5pt);
		\fill [color=black] (3,5.3) circle (1.5pt);
		\fill [color=black] (3.99,5.29) circle (1.5pt);
		\fill [color=black] (5.62,5) circle (0.5pt);
		\end{tikzpicture}
		\caption{The graph $H_3$ in the proof of Theorem~\ref{Theo:p=0}: $P$ is a path starting at $v_1$ and ending at $v_s$; $L_2,\ldots,L_l,M_1,\ldots,M_s,R_2,\ldots,R_r$ are all trees.}\label{Picp=0}
	\end{figure}

In the graph $H_3$, replace all components $C$ of $H_3-(E(C_l)\cup E(C_r) \cup E(P))$ with a rooted path of order $|V(C)|$ rooted at the unique vertex of $C$ that belongs to $V(C_l)\cup V(C_r) \cup V(P)$. This gives us a new graph $H_4$. We claim that $\N(H_3)\geq \N(H_4)$ with equality if and only if $H_3$ and $H_4$ are isomorphic. Indeed, construct from $H_3$ a new graph $H_3'$ by replacing (without loss of generality) $M_1$ with the rooted path $P_{|V(M_1)|}$ whose root is $v_1$. Thus, $H_3-V(M_1-v_1)$ and $H_3'-V(M_1-v_1)$ are isomorphic graphs. On the other hand, if $A$ denotes the number of connected induced subgraphs of $H_3-V(M_1-v_1)=H_3'-V(M_1-v_1)$ that contain $v_1$, then
\begin{align*}
\N(M_1-v_1)+A\cdot \N(M_1)_{v_1}~ (\text{resp.}~\N(P_{|V(M_1)|-1})+A\cdot \N(P_{|V(M_1)|})_{v_1})
\end{align*}
counts precisely the number of connected induced subgraphs of $H_3$ (resp. $H_3'$) that contain a vertex of $M_1-v_1$. Since the path $P_m$ (rooted at a leaf) minimises both the total number of subtrees and the number of subtrees containing a specific vertex $u$ among all $m$-vertex trees (see Sz{\'e}kely and Wang~\cite{szekely2005subtrees}), we deduce that $\N(P_{|V(M_1)|-1}) \leq \N(M_1-v_1)$ and $\N(P_{|V(M_1)|})_{v_1} \leq \N(M_1)_{v_1}$. This implies that
\begin{align*}
\N(H_3')-\N(H_3)=\N(P_{|V(M_1)|-1}) - \N(M_1-v_1) + A( \N(P_{|V(M_1)|})_{v_1} - \N(M_1)_{v_1} ) \leq 0\,.
\end{align*}
Hence, we have $\N(H_3')\leq \N(H_3)$. Equality holds if and only if $M_1$ is a rooted path (see Sz{\'e}kely and Wang~\cite{szekely2005subtrees}), i.e. $H_3$ and $H_3'$ are isomorphic graphs. Since $H_4$ can be obtained from $H_3$ by a repetitive application of this process of moving from $H_3$ to $H_3'$, we derive that $\N(H_4)\leq  \N(H_3)$ with equality if and only if  $H_3$ and $H_4$ are isomorphic.

\medskip
In the graph $H_4$, fix two distinct vertices $u,v \in V(C_l)\cup V(C_r) \cup V(P)$, and consider $H_4$ as the graph $H(n_1;n_2)$ described in Lemma~\ref{MINPathOrder} where $n_1$ (resp. $n_2$) is the order of the path attached at $u$ (resp. $v$) in $H_4$. Lemma~\ref{MINPathOrder} states that whenever $n_1>1$ or $n_2>1$, two new graphs $H(n_1+n_2-1;1)$ and $H(1;n_1+n_2-1)$ can always be constructed from $H(n_1;n_2)$ such that at least one of the inequalities
\begin{align*}
\N(H(n_1;n_2)) > \N(H(n_1+n_2-1;1)) \quad \text{and} \quad  \N(H(n_1;n_2))> \N(H(1;n_1+n_2-1))
\end{align*}
holds. In other words, this shows that a graph $H_5$ with order $n$ and the property that $\N(H_4)\geq \N(H_5)$, can be obtained from $H_4$ by making all components (paths), except possibly only one of $H_4-(E(C_l)\cup E(C_r) \cup E(P))$ trivial. This leaves $H_5$ with two possible shapes if $H_5-(E(C_l)\cup E(C_r) \cup E(P))$ has a non trivial component (rooted path), say $P_k$:
\begin{itemize}
\item Vertex $v_1$ or $v_s$ is the root of $P_k$. In this case, we invoke Lemma~\ref{q=1ksliding} or Lemma~\ref{qksliding} on $H_5$ depending on whether the path $P$ `separating' the cycles $C_l$ and $C_r$ in $H_5$ is trivial or not; 
\item Neither $v_1$ nor $v_s$ is the root of $P_k$. In this case, we apply Lemma~\ref{Refqksliding}.
\end{itemize}
In either case, the combination of Lemmas~\ref{qksliding},~\ref{q=1ksliding}, and~\ref{Refqksliding} shows the existence of another graph $H_6 \in \mathcal{G}(n,0)$ with the property that $\N(H_5)\geq \N(H_6)$ with equality if and only if $H_5$ and $H_6$ are isomorphic. Moreover, by construction $H_6$ is either a double tadpole graph, or a graph constructed from two vertex disjoint cycles $C_l$ and $C_{n-l+1}$ by identifying $u\in V(C_l)$ with $v \in V(C_{n-l+1})$. The latter situation corresponds to the graph described in Lemma~\ref{ordCnandDn}. Consequently, $H_6$ can only be a double tadpole graph if $\N(H_6)$ is to be the minimum number of connected induced subgraphs that a connected graph with order $n$ and no pendant vertices can have.
	
Finally, we invoke Proposition~\ref{PropTadpole} on $H_6$ to obtain the double tadpole graph $D_n$ which satisfies $\N(H_6)>\N(D_n)$ provided that $H_6 \neq D_n$. Summing up, we have proved that  $\mathbb{G}$ is indeed the double tadpole graph $D_n$.
\end{proof}

\section*{Acknowledgement}
The author is pleased to acknowledge a discussion with Stephan Wagner.

\nocite{*}

\end{document}